  \crefname{theorem}{Theorem}{Theorems}
  \crefname{thm}{Theorem}{Theorems}
  \crefname{lemma}{Lemma}{Lemmas}
  \crefname{lem}{Lemma}{Lemmas}
  \crefname{remark}{Remark}{Remarks}
  \crefname{prop}{Proposition}{Propositions}
  \crefname{proposition}{Proposition}{Propositions}
  \crefname{question}{Question}{Questions}
\crefname{notation}{Notation}{Notations}
\crefname{claim}{Claim}{Claims}
  \crefname{defn}{Definition}{Definitions}
  \crefname{corollary}{Corollary}{Corollaries}
  \crefname{section}{Section}{Sections}
  \crefname{figure}{Figure}{Figures}
  \crefname{exercise}{Exercise}{Exercises}
    \crefname{assumption}{Assumption}{Assumptions}
\newtheorem{thm}{Theorem}[section]
\newtheorem{lemma}[thm]{Lemma}
\newtheorem{corollary}[thm]{Corollary}
\newtheorem{prop}[thm]{Proposition}
\newtheorem{question}[thm]{Question}
\numberwithin{equation}{section}
\theoremstyle{definition}
\newtheorem{remark}[thm]{Remark}
\def\cX{\mathcal{X}}
\def\cT{\mathcal{T}}
\def\cP{\mathcal{P}}
\def\cE{\mathcal{E}}
\def\cC{\mathcal{C}}
\def\cA{\mathcal{A}}
\def\P{\mathbb{P}}
\def\E{\mathbb{E}}
\def\Z{\mathbb{Z}}
\def\V{\mathbb{V}}
\def\L{\mathbb{L}}
\newcommand{\1}{\mathbf{1}}
\def \ve {\varepsilon}
\DeclareMathOperator{\dist}{dist}
\DeclareMathOperator{\bP}{\mathsf P}
\DeclareMathOperator{\bZ}{\mathsf Z}
\newcommand{\zero}{\mathbf{0}}
\newcommand{\iid}{i.i.d.}
\newcommand{\ffiid}{ffiid}
\DeclareMathOperator{\diam}{diam}
\title{Finitary codings for gradient models and a new graphical representation for the six-vertex model}
\date{\today}
\author{
	Gourab Ray
	\thanks{University of Victoria, Department of Mathematics, Victoria, BC, V8W 2Y2, Canada. Supported in part by NSERC 50311-57400 and University of Victoria start-up 10000-27458.}
	\and
	Yinon Spinka
	\thanks{University of British Columbia,
    Department of Mathematics,
 	Vancouver, BC, V6T 1Z2, Canada. Supported in part by NSERC of Canada.}
}
\begin{document}
\maketitle

\begin{abstract}
It is known that the Ising model on $\Z^d$ at a given temperature is a finitary factor of an \iid\ process if and only if the temperature is at least the critical temperature. Below the critical temperature, the plus and minus states of the Ising model are distinct and differ from one another by a global flip of the spins. We show that it is only this global information which poses an obstruction to being finitary by showing that the gradient of the Ising model is a finitary factor of \iid\ at all temperatures. As a consequence, we deduce a volume-order large deviation estimate for the energy. Results in the same spirit are shown for the Potts model, the so-called beach model, and the six-vertex model. We also introduce a coupling between the six-vertex model with $c\ge 2$ and a new Edwards--Sokal type graphical representation of it, which we believe is of independent interest.
\end{abstract}

\section{Introduction}
\label{sec:intro}

A \emph{factor of an \iid\ process} on $\Z^d$ is any random field of the form $\varphi(Y)$, where $Y=(Y_v)_{v \in \Z^d}$ is an \iid\ process and $\varphi$ is a measurable function which commutes with translations of $\Z^d$.
Such a factor is \emph{finitary} if in order to compute the value at the origin, one only needs to observe a finite (but random) portion of the \iid\ process, i.e., if there almost surely exists a finite $R$ such that $(Y_v)_{|v| \le R}$ determines the value of $\varphi(Y)_\zero$. In such a case, we say that the random field is a finitary factor of an \iid\ process and we abbreviate this as \emph{\ffiid}.

Ornstein and Weiss~\cite{OW73} (see~\cite{adams1992folner} for a published version) showed that the plus state of the Ising model on~$\Z^d$ at any positive temperature is a factor of an \iid\ process (which is the same as the ergodic-theoretical notion of Bernoulli), thus indicating that the notion of a (non-finitary) factor of \iid\ is not sensitive enough to detect a phase transition in models of statistical mechanics.
In contrast, van den Berg and Steif~\cite{van1999existence} showed that the plus state of the Ising model at a given temperature is \ffiid\ if and only if there is a unique Gibbs measure at that temperature (which is the case if and only if the temperature is at least the critical temperature), so that the notion of a finitary factor of \iid\ aligns precisely with that of a phase transition in this case.
An analogous statement is also known to hold for the Potts model, as well as for monotonic (FKG) models under mild technical assumptions~\cite{spinka2018finitarymrf,harel2018finitary}.
Though such a characterization has not been established for general spin systems, there are additional examples of models where a similar picture emerges~(see~\cite{spinka2018finitarymrf}).
These results point to a close connection between the existence or non-existence of a finitary factor of \iid\ and the more classical phenomenon of phase transition in spin systems.

The results showing the non-existence of a finitary factor of i.i.d.\ when multiple Gibbs states exist are not very illuminating as to the true nature of the obstruction. One goal of this work is to shed more light on the reason as to why such models are not ffiid when multiple Gibbs states exist. In many situations, the extreme invariant Gibbs states are related to one another by a simple transformation involving a global permutation of the spins. Morally, it is this global information which cannot be coded in a finitary manner.
The common narrative behind the results shown here is that once this information is discarded (in a suitable manner), the remaining information can be coded in a finitary manner. We informally call the remaining information a \emph{gradient} of the original model. As we shall see, this nomenclature is natural for the models considered here as this information can be interpreted as a discrete gradient.

We present results regarding the above phenomenon in three examples of statistical mechanics -- the Ising and Potts models, the beach model, and the six-vertex model. We state here brief and informal versions of our results for each model. The precise versions and relevant definitions appear in the later sections.

Our first result concerns the Ising and Potts models at any temperature below the critical temperature. As mentioned, it is known in this case that no Gibbs measure is \ffiid. We define a natural gradient of these models: for the Ising model, this is the edge percolation consisting of those edges whose endpoints have different signs, and for the Potts model, we consider the difference along edges modulo $q$. See \cref{sec:Ising} for detailed definitions and results.

\begin{thm}\label{thm:pre_ising_potts}
Fix $d,q \ge 2$.
\begin{itemize}
 \item The gradient of the plus state of the Ising on $\Z^d$ at inverse temperature~$\beta>\beta_c(d)$ is \ffiid\ with a coding radius having exponential tails.
 \item The gradient of any constant-boundary Gibbs measure for the $q$-state Potts model on $\Z^d$ at inverse temperature $\beta>0$ is \ffiid\ if and only if the free and wired measures of the associated random-cluster model coincide.
\end{itemize}
\end{thm}

We mention a curious observation here. Suppose $X$ is sampled from the plus state of the Ising model and let $Y$ be its gradient.
In the uniqueness regime ($\beta \le \beta_c$), where $X$ (and hence also $Y$) is \ffiid, the gradient mods out a single global bit (in ergodic-theoretical terms, $X$ is a two-point extension of $Y$), and so one cannot recover $X$ as an almost sure function of $Y$.
At low temperatures ($\beta>\beta_c$) on the other hand, the gradient operation is non-lossy (it does not lose information) and $X$ can be recovered from $Y$ (given the gradient, there is a unique choice of spins which results in a higher density of pluses). This is curious since $X$ is not \ffiid, while $Y$, which is obtained from $X$ via a non-lossy continuous map, is \ffiid.

Our second result concerns the beach model of Burton and Steif~\cite{burton1994non}. The model bears a strong analogy with the Potts model, with spins having a $\{0,1\}$-valued state in addition to a $q$-valued type. Above a certain critical parameter, the model admits $q$ ordered Gibbs measures, all of which are not \ffiid. We consider a gradient of this model, which preserves the $\{0,1\}$-valued state and is applied to the types in the same manner as for the Potts model, and show that this gradient is \ffiid\ (see \cref{sec:beach-model} for details).

\begin{thm}\label{thm:pre_beach}
Fix $d,q \ge 2$.
The gradient of any constant-type Gibbs measure for the $q$-type beach model on $\Z^d$ at fugacity~$\lambda>0$ is \ffiid\ if and only if the free and wired measures of the associated beach-random-cluster model coincide.
\end{thm}

Our third result concerns the six-vertex model (more precisely, a specific version of the six-vertex model called the F-model) with large parameter $c$, where we consider the Gibbs measures arising from ``flat'' boundary conditions. The model admits a natural integer-valued height function representation. We consider the discrete gradient and ``Laplacian'' of this height function, and show that while they are not \ffiid, the absolute value of the latter is. To this end, we introduce and initiate the study of a new graphical representation of the six-vertex model with parameter $c \ge 2$.
We believe that this representation is also a major contribution of this paper and is of independent interest (see \cref{sec:6v} for details).

\begin{thm}\label{thm:pre_six_vertex}
For the six-vertex model with $c$ large enough:
\begin{itemize}
 \item The height function, its gradient and its Laplacian are not \ffiid.
 \item The absolute value of the Laplacian is \ffiid.
\end{itemize}
\end{thm}

The existence of a finitary coding in all of the above results relies on a more general result, given in \cref{sec:general}, which roughly says that if a model has a random-cluster-type representation which is known to be \ffiid\ and has a unique infinite cluster, then the gradient of the model is also \ffiid.

\subsection{Outline of proof}

We focus our attention here on an outline of the proof that the gradient of the Potts model is \ffiid\ when the associated free and wired random-cluster measures coincide (the ``if'' part of the second item in \cref{thm:pre_ising_potts}). 

An indispensable tool in the study of this model is the random-cluster model, which serves as a graphical representation of the Potts model. We do not recall the definition of this model here, but only that it is an edge percolation model and that it is related to the Potts model via the Edwards--Sokal coupling which can be described as follows. Let $\sigma$ have the law of a constant 0 boundary condition Gibbs state for the Potts model and let $\omega$ have the law of the associated wired random-cluster measure.
The two are coupled in such a way that if an edge is present in $\omega$, then the spins at its endpoints are forced to be equal in $\sigma$.
Subject to this constraint, the coupling is essentially as simple as possible: in one direction, given the spin configuration $\sigma$, the random-cluster configuration $\omega$ is obtained via an independent edge percolation on clusters of constant spin with a parameter depending on the temperature. In the other direction, given the random-cluster configuration, the spins are obtained by independently assigning a uniform spin to each finite cluster, and assigning spin 0 to the infinite cluster.

A recent result from \cite{harel2018finitary} shows that $\omega$ is \ffiid\ precisely when the free and wired random-cluster measures coincide. Thus, it is enough to show that when $\sigma$ is obtained from $\omega$ as above, its gradient is a finitary factor of $\omega$ and an additional independent \iid\ process.
This is not immediate, as $\omega$ contains an infinite cluster and it is not possible to figure out whether a vertex is in the infinite cluster in a finitary manner (that is, the assignment of spin 0 to the infinite cluster requires looking at infinitely many edges in $\omega$). We get around this problem by constructing a rooted tree structure on the clusters of $\omega$ in which the infinite cluster is the root and with the property that the tree can (in a certain sense) be obtained from $\omega$ in a finitary manner (this part of the argument is general and works for any percolation process with a unique infinite cluster; see \cref{sec:general}). Assigning independent spins to the \emph{finite} clusters, this tree structure allows us to view these spins, not as the actual value of the spins in the cluster (which would be the straightforward way to implement the Edwards--Sokal coupling), but rather as a difference (mod $q$) between the value of the spins in that cluster and its parent cluster. The gradient along a directed edge $(u,v)$ may then be computed by traveling along the tree, first up the tree from $u$ to its lowest common ancestor with $v$, and then back down the tree to $v$, adding the spins along the way (with a negative sign when going down), disregarding the spin of the lowest common ancestor (which might be the infinite cluster).
This will show that the gradient of $\sigma$ is a finitary factor of $\omega$ and an independent \iid\ process, and will hence allow us to conclude that the gradient is \ffiid.

\smallskip

The ``only if'' part of the second item in \cref{thm:pre_ising_potts} will follow easily from the results in~\cite{harel2018finitary} and the Edwards--Sokal coupling.

\smallskip

For the first item in \cref{thm:pre_ising_potts}, we additionally use the known fact that the free and wired FK-Ising measures (the random-cluster measure with cluster weight $q=2$) coincide for all values of the parameter $p$. Combining this with Pisztora's coarse graining approach gives us good control on the coding radius as well. 

\smallskip

For the beach model (\cref{thm:pre_beach}), a similar approach works using an analogous random-cluster representation introduced by Haggstr\"om \cite{haggstrom1996random,haggstrom1998random}.
For this, we use the fact that this representation has monotonicity properties (FKG) and a general result from~\cite{harel2018finitary} about finitary codings for monotone models with unique Gibbs measures.

\smallskip

For the six-vertex model (\cref{thm:pre_six_vertex}), we construct a similar representation and prove the necessary monotonicity properties and uniqueness of Gibbs measure for large $c$. This allows us to apply our general result. A short time after the first draft of this paper appeared online, it came to our attention that Marcin Lis \cite{lis2019spins} had independently obtained a similar representation for a wider class of models.

\subsection{Coding definitions}
While our main results only deal with models on $\Z^d$, some of the relevant random fields which arise are defined on the vertices of $\Z^d$ and some on the edges of $\Z^d$. Also, we will be concerned with gradients of these models, which naturally live on slightly modified graphs. For these reasons, we give the definitions below for general graphs and not just for $\Z^d$.

Let $G$ be a transitive locally-finite graph on a countable vertex set $\V$, and let $\Gamma$ be a transitive subgroup of the automorphism group of $G$. A \emph{random field (or random process)} on $G$ is a collection of random variables $X=(X_v)_{v \in \V}$ indexed by the vertices of $G$ and defined on a common probability space.
We say that $X$ is \emph{$\Gamma$-invariant} if its distribution is not affected by the action of $\Gamma$, i.e., if $(X_{\gamma v})_{v \in \V}$ has the same distribution as $X$ for any $\gamma \in \Gamma$.

Let $S$ and $T$ be two measurable spaces, and let $X=(X_v)_{v \in \V}$ and $Y=(Y_v)_{v \in \V}$ be $S$-valued and $T$-valued $\Gamma$-invariant random fields.
A \emph{coding} from $Y$ to $X$ is a measurable function $\varphi \colon T^\V \to S^\V$, which is \emph{$\Gamma$-equivariant}, i.e., commutes with the action of every element of $\Gamma$, and which satisfies that $\varphi(Y)$ and $X$ are identical in distribution. Such a coding is also called a \emph{factor map} from $Y$ to~$X$, and when such a coding exists, we say that $X$ is a \emph{$\Gamma$-factor} of~$Y$.

Suppose now that $S$ and $T$ are countable.
Let $\zero \in \V$ be a distinguished vertex.
The \emph{coding radius} of $\varphi$ at a point $y \in T^\V$, denoted by $R(y)$, is the minimal integer $r \ge 0$ such that $\varphi(y')_\zero=\varphi(y)_\zero$ for all $y' \in T^\V$ which coincide with $y$ on the ball of radius $r$ around $\zero$ in the graph-distance, i.e., $y'_v=y_v$ for all $v \in \V$ such that $\dist(v,\zero) \le r$. It may happen that no such~$r$ exists, in which case, $R(y)=\infty$.
Thus, associated to a coding is a random variable $R=R(Y)$ which describes the coding radius.
While $S$ will always be finite or countable, we will allow $T$ to be a larger space, in which case the coding radius may be similarly defined.
A coding is called \emph{finitary} if $R$ is almost surely finite. When there exists a finitary coding from $Y$ to $X$, we say that $X$ is a \emph{finitary $\Gamma$-factor} of $Y$. When $X$ is a finitary $\Gamma$-factor of $Y$ for some \iid\ process $Y$, we say that $X$ is \emph{$\Gamma$-\ffiid}. When we simply say that $X$ is \ffiid, we implicitly take $ \Gamma$ to be the entire automorphism group of $G$.

Let us make one last remark concerning the issue of the graph on which a certain model lives. For convenience, we always let the \iid\ process live on the vertices of the graph, even when the model itself does not. Take, for instance, a model on the edges of $\Z^d$, i.e., a random field $X=(X_e)_{e \in E(\Z^d)}$. When we say that $X$ is \ffiid, we mean that there is an \iid\ process on the vertex set of $\Z^d$, say $Y=(Y_v)_{v \in \Z^d}$, and a finitary coding from $Y$ to $X$, which is invariant under the automorphism group of $\Z^d$ (which acts naturally on the edges of $\Z^d$).

\paragraph{Organization.}
The rest of the paper is organized as follows. In \cref{sec:general}, we introduce and prove a general result which will be used to prove that certain gradients are \ffiid. We prove our results for the Potts model in \cref{sec:Ising}, and for the beach model in \cref{sec:beach-model}. In \cref{sec:6v}, we define the six-vertex model, introduce its graphical representation, and establish several properties of it, including its coupling with the spin representation of the six-vertex model. In \cref{sec:h_grad}, we prove our coding results for the six-vertex model. We end with a discussion in \cref{sec:open} on open problems and directions for future research.

\paragraph{Acknowledgements.}
We are grateful to Alexander Glazman and Ron Peled for many useful discussions regarding the six-vertex model and for bringing to our attention the connection between this model and the Ashkin--Teller model.
We would also like to thank Raphael Cerf, Hugo Duminil-Copin and Matan Harel for helpful discussions and also Marcin Lis for insightful discussions about graphical representations of spin models. Finally, we would like to thank the anonymous referee for many helpful comments.

\section{A general result}\label{sec:general}

In this section, we prove a general result (\cref{thm:grad_general} below) about the finitary codability of the gradient of independently colored clusters of an edge percolation process. We will later use this general result for the proofs of the main theorems stated in \cref{sec:intro}.

Let us introduce some notation.
Let $G=(\V,E)$ be a transitive locally-finite connected graph on a countable vertex set $\V$ and let $\Gamma$ be the automorphism group of $G$. Let $\omega \in \{0,1\}^E$ be an edge percolation configuration on $G$. We often identify $\omega$ with the subset $\{ e \in E : \omega_e=1\}$, which may in turn be identified with the subgraph $(\V,\omega)$ of $G$ induced by it.
A cluster of $\omega$ is a connected component in the graph $(\V,\omega)$. We denote by $\cC(\omega)$ the collection of clusters of $\omega$.
For a vertex $u$, we denote by $C_u(\omega) \in \cC(\omega)$ the cluster containing~$u$.
When $\omega$ has a unique infinite cluster (as will always be the case here), we denote it by $C_\infty(\omega)$.

\subsection{A rooted tree of clusters as a finitary factor}

In this section, we prove a general result about the existence of a tree of clusters with certain properties in any percolation process with a unique infinite cluster.

Let us consider the subset $\Omega$ of $\{0,1\}^E$ consisting of all edge percolation configurations having a unique infinite cluster, i.e.,
\begin{equation}\label{eq:Omega-def}
\Omega := \big\{ \omega \in \{0,1\}^E : \omega\text{ has exactly one infinite cluster} \big\} .
\end{equation}
A \emph{cluster-tree} of $\omega \in \Omega$ is a rooted tree on the vertex set $\cC(\omega)$ whose root corresponds to the unique infinite cluster $C_\infty(\omega)$ and is the only node in the tree with an infinite degree.
We note that the automorphism group of~$G$ acts on the space of cluster-trees in a natural way: if $\gamma$ is an automorphism of $G$ and $T$ is a cluster-tree of $\omega \in \Omega$, then $\gamma T$ is a cluster-tree of $\gamma \omega$ (i.e., a tree on vertex set $\cC(\gamma \omega) = \gamma \cC(\omega)$) satisfying that $\{\gamma C,\gamma C'\} \in \gamma T$ if and only if $\{C,C'\} \in T$ for any $C,C' \in \cC(\omega)$.

A \emph{cluster-tree factor map} is a measurable equivariant function which maps every $\omega \in \Omega$ to a cluster-tree on $\omega$ (the space of cluster-trees can be endowed with a natural $\sigma$-algebra).
Intuitively, such a map will be finitary if the rule governing how a finite cluster selects its parent is local in the sense that it can be described via an exploration process which is guaranteed to terminate after finitely many steps. With the goal of defining this finitary property precisely, we now give some definitions.

Given a function $g$ on $\Omega$ and a configuration $\omega \in \Omega$, we say that a set $W \subset \V$ is a \emph{witness} for $g(\omega)$ if $g(\omega)=g(\omega')$ for any $\omega' \in \Omega$ that coincides with $\omega$ on the edges incident to $W$. We note that if $W$ is a witness for $g(\omega)$, then so is any set containing $W$ (and also $\V$ is always a witness). We stress that the set of witnesses for $g(\omega)$ depends on the pair $(g,\omega)$.
We say that $g(\omega)$ can be \emph{found in a finitary manner} if there is a \emph{finite} witness for $g(\omega)$.

Let us give an example. Consider the map $g$ defined by $\omega \mapsto \1_{\{C_v(\omega)\text{ is infinite}\}}$, where $v$ is a fixed vertex.
Then $g(\omega)$ can be found in a finitary manner if and only if the cluster of $v$ in $\omega$ is finite. Indeed, if the cluster of $v$ in $\omega$ is finite, then its vertex set is a witness for $g(\omega)$. On the other hand, if the cluster of $v$ is infinite, then there is no finite witness for $g(\omega)$. Indeed, if $W$ is a finite set, then by closing all edges on the boundary of a sufficiently large ball around $v$, we get a configuration $\omega'$ with $g(\omega')=0$ and at least one infinite cluster, and after closing all but one of these clusters, we get a configuration $\omega'' \in \Omega$ with $g(\omega'')=0$. Since $\omega''$ coincides with $\omega$ near $W$, this shows that $W$ cannot be a witness for $g(\omega)$.

Let us return to our discussion on cluster-tree factor maps.
The above example shows that, given a percolation process, the random field $(\1_{\{C_v\text{ is infinite}\}})_{v \in \V}$ is in general not a finitary factor of the percolation process (in the usual sense). In particular, we also cannot determine in a finitary manner the distance in the cluster-tree from a given finite cluster to the root. Instead, we aim to find the shortest path in the cluster-tree between the clusters of two given vertices, modulo the information of the cluster corresponding to their lowest common ancestor. Since it is possible to determine in a finitary manner whether two given vertices are in the same cluster or not (due to the uniqueness of the infinite cluster), this will allow us to circumvent the aforementioned issue. To describe the function $g$ which encodes the relevant information, we proceed to give the necessary notation.

Let $\cT$ be a cluster-tree factor map and let $\omega \in \Omega$. In the definitions below, we suppress $\omega$ in the notation for clarity (e.g., $\cT=\cT(\omega)$, $\cC=\cC(\omega)$ and so on).
For a finite cluster $C \in \cC$, we denote by $\cP(C)$ the parent of $C$ in $\cT$. When $C$ is at distance at least $k$ from the root in $\cT$, we denote by $\cP_k(C)$ the $k$-th parent of $C$ in $\cT$. In particular, $\cP_0(C)=C$, $\cP_1(C)=\cP(C)$ and $\cP_k(C) = \cP(\cP_{k-1}(C))$.
For two clusters $C,C' \in \cC$, we denote by $\cA(C,C')$ the lowest common ancestor of $C$ and $C'$ in~$\cT$.
We also write $\cA(u,v)$ as shorthand for $\cA(C_u,C_v)$.
Let $N_{u,v}$ denote the distance between $C_u$ and $\cA(u,v)$ in $\cT$.
Note that $N_{u,v}$ is not the same thing as $N_{v,u}$. For instance, if $C_v=\cP(C_u)$, then $N_{u,v}=1$ and $N_{v,u}=0$. More generally, we have that $\cP_{N_{u,v}}(C_u) = \cA(u,v)$.
We also write $\cP_k(u)$ as shorthand for $\cP_k(C_u)$, and set $\cP_{-1}(u)=\{u\}$. Note that $\cP_k(u)$ is finite for all $k \ge -1$ strictly less than the distance between $C_u$ and the root in $\cT$.
We refer the reader to \cref{fig:cluster-tree} for an illustration of some of these notions.

We are now ready to define the function $g$ of interest.
Let $u,v \in \V$ be two vertices, and let $g(\omega)$ be the collection of objects: $N_{u,v}$ and $N_{v,u}$ and the two sequences of finite clusters $(\cP_0(u),\dots,\cP_{N_{u,v}-1}(u))$ and $(\cP_0(v),\dots,\cP_{N_{v,u}-1}(v))$. Note that $g$ depends implicitly on $\cT$. Note also that $\cA(u,v)$ is not included, so that if, say, $\cA(u,v)=C_u$ then $N_{u,v}=0$ and the first sequence is empty.
Furthermore, note that a witness for $g(\omega)$ must be large enough to determine the clusters in $(\cP_0(u),\dots,\cP_{N_{u,v}-1}(u))$ and $(\cP_0(v),\dots,\cP_{N_{v,u}-1}(v))$ (it must contain them). 
Let $R_{u,v}$ be the minimal $r$ such that the union of the two balls of radius $r$ around $u$ and $v$ is a witness for $g(\omega)$.
Thus, $g(\omega)$ can be found in a finitary manner if and only if $R_{u,v}<\infty$.

Let $L_{u,r}$ denote the minimal $\ell \ge r$ such that any two vertices $v,w \in C_\infty$ contained in the ball of radius $r$ around $u$, are connected in $\omega$ within the ball of radius $\ell$ around $u$. Recall the definition of $\Omega$ from~\eqref{eq:Omega-def}.

\begin{figure}
\centering
\includegraphics[scale=0.85,trim={0cm 0cm 0cm 2cm},clip]{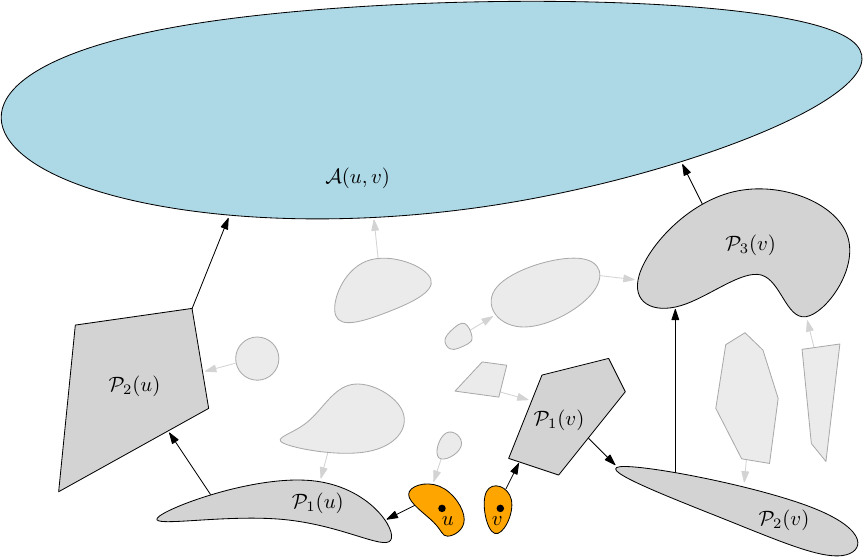}
\caption{An illustration of the cluster-tree construction. Each finite cluster has an arrow pointing to its parent. Given two vertices $u$ and~$v$, one may find in a finitary manner the paths (in dark gray) in the cluster-tree from the clusters of $u$ and $v$ (in orange) to their lowest common ancestor (in blue).}
\label{fig:cluster-tree}
\end{figure}

\begin{prop}\label{prop:tree}
There exists a cluster-tree factor map $\cT$ such that $R_{u,v}$ is finite for every $\omega \in \Omega$ and $u,v \in \V$.
Moreover, if $\omega$ is a random variable that belongs to $\Omega$ almost surely, and there exist constants $c,c',a>0$ such that, for all $u \in \V$ and $r \ge 1$,
\begin{equation}\label{eq:tree-exp-decay-cond}
\P(r \le \diam C_u < \infty) + \P(\dist(C_u,C_\infty) \ge r) + \P(L_{u,r} \ge ar) \le c'e^{-cr} ,
\end{equation}
then $R_{u,v}$ has exponential tails for any $u,v \in \V$.
\end{prop}

We remark that the map $\cT$ is universal in the sense that it does not depend on the law of the random percolation process, but rather it is a single deterministic map which may be applied to any percolation process (even a non-invariant one) having a unique infinite cluster. In fact, as will be clear from the construction, the map is even universal with respect to the underlying graph $G$, in the sense that one does not need to know the structure of the entire graph, only of that part which is revealed during the exploration. However, we do not use this and hence do not make this precise. We also remark that the proposition extends to quasi-transitive graphs.

In the proof below, given two clusters $C$ and $C'$, we write $\dist_\cT(C,C')$ for the distance between $C$ and $C'$ as nodes in the tree $\cT$, and we write $\dist(C,C')$ for the distance between $C$ and $C'$ as subsets in the graph $G$, namely, $\min_{u \in C, v \in C'} \dist(u,v)$.
We denote the ball of radius $r$ around $u$ by $B_r(u)$ and also denote $B_r(U) = \bigcup_{u \in U} B_r(u)$ for a set $U \subset \V$.

\begin{proof}
Let $\omega \in \Omega$.
We begin by defining the cluster-tree $\cT=\cT(\omega)$. Since the root of $\cT$ must be $C_\infty$, we only need to describe how to determine the parents of finite clusters.
Let $C \in \cC$ be a finite cluster.
For $i \ge 1$, let $V_i(C)$ denote the largest-diameter cluster intersecting $B_i(C)$, and set $V_i(C):=\emptyset$ if there is a tie.
The parent of $C$ is defined to be $\cP(C):=V_k(C)$, where $k=k(C)$ is the smallest index such that $\diam V_k(C) \ge k \ge 5\diam C$. Note that such a $k$ necessarily exists since $V_i(C)=C_\infty$ for all $i \ge \dist(C,C_\infty)$ and $\dist(C,C_\infty)<\infty$ as $\omega$ has a unique infinite cluster by assumption. See \cref{fig:cluster-tree2}. Note for later use that
\[ k(C) = \max\{ 5\diam C, \dist(C,\cP(C)) \} .\]

We have thus defined a parent $\cP(C)$ for every finite cluster $C$. Let us now show that $\cT$ is a tree. It is clear that there are no cycles, since the diameter of $\cP(C)$ is strictly larger than that of $C$. Hence, we only need to show that the graph is connected. To this end, we must show that $C_\infty$ is an ancestor of every finite cluster, or equivalently, that $\dist_\cT(C_u,C_\infty)$ is finite for every $u \in \V$.
This follows easily from the claim that
\begin{equation}\label{eq:tree-dist}
\sum_{i=0}^{\dist_\cT(C_u,C_\infty)-2}
\Big[
\diam \cP_i(u) + k(\cP_i(u)) \Big] \le 3 \dist(C_u,C_\infty) .
\end{equation}
We note that~\eqref{eq:tree-dist} is vacuous unless $\dist_\cT(C_u,C_\infty) \ge 2$. We also note that it says nothing about the diameter of the largest finite ancestor cluster of $C_u$ (the one just before the root), nor about its $k$.
Finally, we note that~\eqref{eq:tree-dist} implies that all the finite ancestors of $u$, namely, $\cP_1(u),\dots,\cP_{\dist_\cT(C_u,C_\infty)-1}(u)$, are at distance at most $3\dist(C_u,C_\infty)$ from $u$, since $\dist(C,\cP(C)) \le k(C)$ for any finite cluster $C$, and for any $1 \le i \le \dist_\cT(C_u,C_\infty)-1$,
\begin{equation}\label{eq:triangle}
\dist(u,\cP_i(u)) \le \diam C_u + \dist(C_u,\cP_i(u)) \le \sum_{j=0}^{i-1} \Big[
\diam \cP_j(u) + \dist(\cP_j(u),\cP_{j+1}(u)) \Big] .
\end{equation}

Towards proving~\eqref{eq:tree-dist}, suppose that $n := \dist_\cT(C_u,C_\infty) - 2$ is non-negative and denote $P_i := \cP_i(u)$ and $d_i := \diam P_i$ for $0 \le i \le n+2$. Also denote $k_i := k(P_i)$ for $0 \le i \le n+1$. Note that $P_{n+1}$ is finite and $P_{n+2}=C_\infty$. Observe that, by construction, for every $0 \le i \le n+1$, we have
\[ d_{i+1} \ge k_i = \max\{5d_i, \dist(P_i,P_{i+1})\} \qquad\text{and}\qquad P_{i+1} = V_{k_i}(P_i) .\]
Then
\begin{align*}
  d_0+k_0+\dots+d_n+k_n
  &\le \tfrac 65 (k_0+\dots+k_n) \\ 
  &\le \tfrac 65 (\tfrac 1{5^n} + \cdots + \tfrac 1{5^3} + \tfrac 1{5^2} + \tfrac 15 + 1)k_n
  \le \tfrac32 k_n.
\end{align*}
Moreover, since $P_{n+1}$ has the largest diameter among all clusters intersecting $B_{k_n}(P_n)$ and since $P_{n+1} \neq C_\infty$, we have that $\dist(P_n,C_\infty) > k_n$.
Hence,
\begin{align*}
 k_n
  &< \dist(C_\infty,P_n) \\
  &\le \dist(C_\infty,P_0) + \diam P_0 + \dist(P_0,P_1) + \cdots + \diam P_{n-1}+ \dist(P_{n-1},P_n) \\
  &\le \dist(C_\infty,C_u) + d_0 + k_0 + \cdots + d_{n-1} + k_{n-1} \\
  &\le \dist(C_\infty,C_u) + \tfrac12 k_n ,
\end{align*}
so that $k_n \le 2\dist(C_u,C_\infty)$.
Therefore, $d_0+k_0+\dots+d_n+k_n \le \frac32 k_n \le 3\dist(C_u,C_\infty)$, thus proving~\eqref{eq:tree-dist}.

\begin{figure}[t!]
\centering
\captionsetup{width=0.96\textwidth}
\includegraphics[scale=0.8,trim={1cm 0cm 0cm 0.5cm},clip]{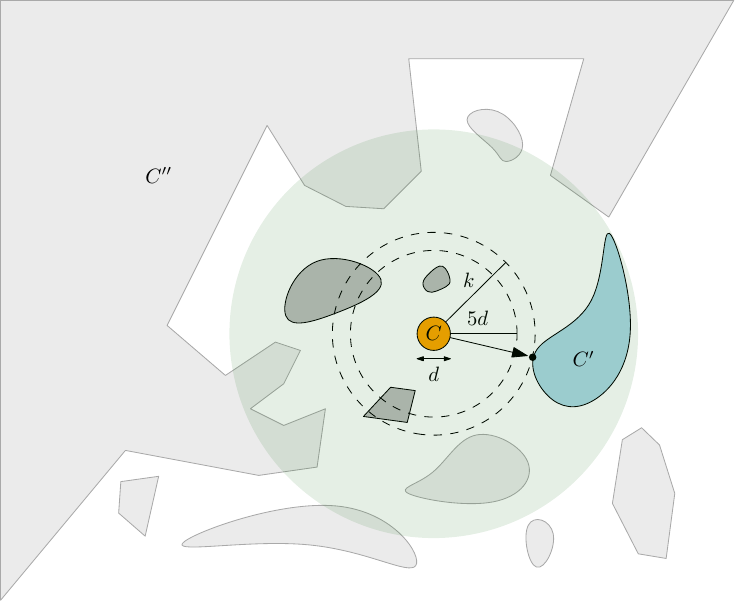}~~~%
\includegraphics[scale=0.8,trim={1cm 0cm 0cm 0.5cm},clip]{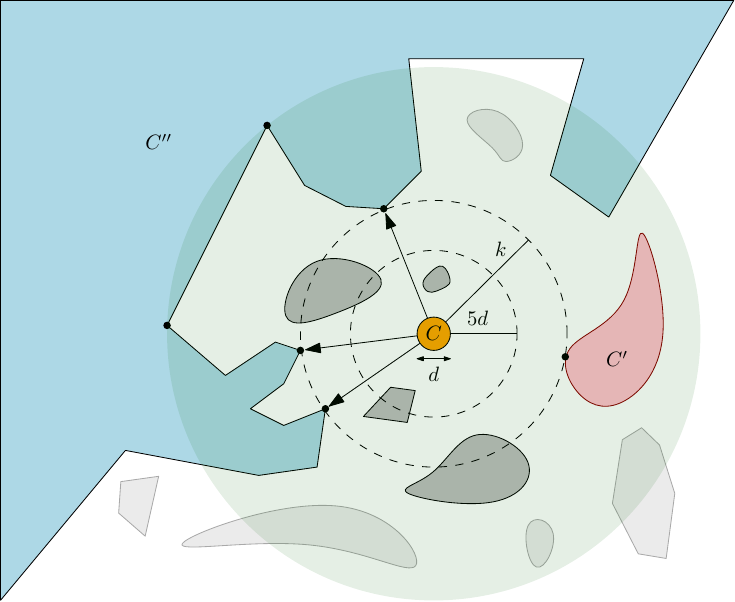}
\caption{An illustration of how the parent of a finite cluster $C$ is found in a finitary manner. In both situations depicted, $d=\diam C$, $k=k(C)$, the parent of $C$ is shown in blue and a ball around $C$ witnessing its parent is shown in light green. The dark gray clusters were tested as potential parents of $C$ at a prior stage (before reaching distance $k$), but did not satisfy the required criteria. The light gray clusters do not need to be tested as the parent of $C$ was already found before they were reached. The red cluster (appearing only in the situation on the right) was tested at the same stage $k$ as when the parent of $C$ was found, but was not the largest-diameter cluster at that stage and so was not chosen as the parent.
\textit{Left:} $C'$ is the parent of $C$.
\textit{Right:} The same cluster configuration as on the left, except that $C'$ has moved slightly to the right. Here $C''$ is the parent of $C$.}
\label{fig:cluster-tree2}
\end{figure}

It remains to establish the desired finitaryness property and the moreover part. Fix two distinct vertices $u,v \in \V$. Before giving the details, let us explain heuristically what is happening. See also \cref{fig:cluster-tree2}. We start with $u$. We begin exploring the cluster of $u$ on larger and larger balls around $u$. If at some point we see that its cluster is finite, then we proceed to find its parent cluster. To find its parent, starting with $i=5\diam C_u$, we partially explore the clusters intersecting $B_i(C_u)$ as follows. Simultaneously for each $w$ in this set, we explore the cluster of $w$ in increasing balls around $C_u$. Eventually we will discover which $w$ belong to distinct clusters (i.e., the connectivities between different $w$), and we can continue exploring until we have seen all clusters in their entirety except perhaps one (which may or may not be the infinite cluster). At this point, in order to determine the largest-diameter cluster, we need only determine whether the unknown cluster has a larger diameter than the others. By increasing the radius of exploration if necessary, this may be determined. By further increasing the radius if necessary, we can also determine whether this cluster has diameter at least $i$ (which was perhaps already known). Finally, we now know whether there is a largest-diameter cluster (or rather a tie), and if so, whether it has diameter at least $i$. We thus know whether $k(C_u)=i$ or not, i.e., whether to increase $i$ by one and repeat the above, or whether to stop. In the latter case, if the parent is a finite cluster, we may have already discovered it completely, but if it is the infinite cluster, we surely have not. Either way, we have found a vertex $w$ for which we know that the parent of $C_u$ is $C_w$ (even if we may not know the shape of this latter cluster). We then continue in the same manner, namely, we begin exploring $C_w$ from the vertex $w$ which we have already found, and if at some point we see that $C_w$ is finite, we proceed to find its parent, and repeat. In parallel, we do the same for $v$. At some point both $u$ and $v$ will have discovered pieces of $\cA(u,v)$ and each will know that the piece it has discovered is part of an ancestor cluster. If $\cA(u,v)$ is finite, then at some point both will discover the entire cluster. Otherwise, at some point we will see that the two pieces are connected to each other (since the infinite cluster is unique). Either way, we will know that we have reached the common ancestor, so that we may stop exploring. This shows that $R_{u,v}$ is finite. Of course, in order to deduce the moreover part, we must have sufficiently good control on $R_{u,v}$. The three elements for this are: control on the size of finite clusters (as we need to explore enough in order to be sure that a certain cluster is the parent of another), control on the distance to the infinite cluster (as this is what is ensuring that the ancestors of $u$ and $v$ do not drift to far away from each other), and control on the connectivity of the infinite cluster (as we need to explore enough in order to see that the two pieces discovered by $u$ and $v$ are indeed connected to each other in the case that $C_\infty$ is their common ancestor).
We now proceed to give the details.

Denote $R_u := \dist(C_u,C_\infty)$ and
\[ M_u := \max_{w \in B_{3R_u}(u)} (\diam C_w) \1_{\{C_w \neq C_\infty\}} \quad\text{and}\quad M'_u := \max_{w \in B_{7R_u+6M_u}(u)} (\diam C_w) \1_{\{C_w \neq C_\infty\}} .\]
We explain the need for the above ``two-step iteration'' (note that $M'_u$ uses $M_u$ in its definition): while $M_u$ controls the sizes of the ancestor clusters of $u$ and other clusters nearby, it does not control the sizes of clusters nearby the last finite ancestor $\cP_{\dist_\cT(C_u,C_\infty)-1}(u)$, which will be needed in order to witness the fact that its parent is $C_\infty$ and not some other large nearby cluster. For this reason, we also require $M'_u$, which provides this control.
Similarly define $R_v$, $M_v$ and $M'_v$.
Finally, define
\[ L_* := L_{u,\dist(u,v) + 7(R_u+R_v)+6(M_u+M_v)} .\]
We note that there is a slight asymmetry between $u$ and $v$ in the definition of $L_*$, but this is not important (other definitions would also work).
Since all these variables are finite for every $\omega \in \Omega$, the first part of the proposition will follow once we show that
\begin{equation}\label{eq:tree-coding-radius}
R_{u,v} \le 7(R_u+R_v)+6(M_u+M_v) + M'_u+M'_v +L_* .
\end{equation}
Before establishing this, let us show how it yields the moreover part of the proposition.
To this end, suppose that $\omega \in \Omega$ is random and that~\eqref{eq:tree-exp-decay-cond} holds.
Since $R_u$ and $(\diam C_u)\1_{\{C_u \neq C_\infty\}}$ have exponential tails, so does $M_u$. More precisely, letting $b>0$ be such that $|B_{3br}(u)| e^{-cr} \le e^{-cbr}$ (which exists since $G$ has bounded degree), we have that
\begin{align*}
\P(M_u \ge r) &\le \P(R_u \ge br) + |B_{3br}(u)| \cdot \max_{w \in B_{3br}(u)} \P(r \le \diam C_w < \infty) \le 2c'e^{-cbr} .
\end{align*}
By a similar argument, we get that $M'_u$ has exponential tails. To see that $L_*$ has exponential tails, note that
\[ \P(L_* \ge ar) \le \P(\dist(u,v)+7(R_u+R_v)+6(M_u+M_v) \ge r) + \P(L_{u,r} \ge ar) \]
and that both terms decay exponentially in $r$. Thus, the right-hand side of~\eqref{eq:tree-coding-radius} has exponential tails, showing that $R_{u,v}$ does as well.

It remains to prove~\eqref{eq:tree-coding-radius}. By definition of $R_{u,v}$, this means we need to show that $N_{u,v}$, $N_{v,u}$, $(\cP_i(u))_{i<N_{u,v}}$ and $(\cP_i(v))_{i<N_{v,u}}$ are witnessed by
\[ W := B_{7(R_u+R_v)+6(M_u+M_v) + M'_u+M'_v +L_*}(\{u,v\}) .\]
Set $n := \dist_\cT(C_u,C_\infty)$ and $n' := \dist_\cT(C_v,C_\infty)$.

Let us first show that for any $0 \le i < n$, $W$ witnesses the event $\{\dist_\cT(C_u,C_\infty) \ge i\}$ and the entire cluster $\cP_i(u)$. To emphasize, the latter means that, for any $\omega' \in \Omega$ which agrees with $\omega$ on the edges incident to $W$, it holds that $\dist_{\cT(\omega')}(C_u(\omega'),C_\infty(\omega')) \ge i$ and $\cP_i(u;\omega') = \cP_i(u;\omega)$.
Suppose that we have already shown that $W$ witnesses $\cP_{i-1}(u)$, and let us show that it also witnesses $\cP_i(u)$.
Note that $C_u$ and $\cP_i(u)$ are finite since $N_{u,v}>i$.
Recall that, by definition, $\cP_i(u) = V_k(\cP_{i-1}(u))$, where $k := k(\cP_{i-1}(u))$ is the minimal number such that $\diam V_k(\cP_{i-1}(u)) \ge k \ge 5\diam \cP_{i-1}(u)$.
By definition, every cluster intersecting $B_k(\cP_{i-1}(u))$, other than $\cP_i(u)$, has diameter strictly less than $d := \diam \cP_i(u)$.
It is straightforward to verify that $B_{k+d}(\cP_{i-1}(u))$ witnesses $k$ and $\cP_i(u)$. Thus, to deduce that $W$ witnesses $\cP_i(u)$, it remains only to show that $B_{k+d}(\cP_{i-1}(u)) \subset W$, for which it suffices to show that $\dist(u,\cP_{i-1}(u)) + \diam \cP_{i-1}(u) + k+d \le 3R_u+M_u$. Indeed, $\dist(u,\cP_{i-1}(u)) + \diam \cP_{i-1}(u) + k \le 3R_u$ by~\eqref{eq:tree-dist} and~\eqref{eq:triangle}, and $d \le M_u$ follows from the definition of $M_u$ and since $\dist (u,\cP_i(u)) \le 3R_u$ by~\eqref{eq:tree-dist} and~\eqref{eq:triangle}.

We similarly have that for any $0 \le j < n'$, $W$ witnesses the event $\{\dist_\cT(C_v,C_\infty) \ge j\}$ and $\cP_j(v)$. 

It remains to show that $W$ witnesses $N_{u,v}$ and $N_{v,u}$.
Observe that this already follows from the above in the case when $\cA(u,v) \neq C_\infty$. Indeed, in this case, $N_{u,v}$ is the smallest $0 \le i < n$ such that $\cP_i(u) = \cP_j(v)$ for some $0 \le j < n'$, and similarly for $N_{v,u}$. When $\cA(u,v) = C_\infty$, we cannot expect to actually find these sets in a finitary manner.
Note however that we do not actually need to know the sets $\cP_i(u)$ and $\cP_j(v)$ themselves, but rather only whether they are equal or not. Instead, we show that $W$ witnesses the existence of two numbers $i$ and $j$ such that $\cP_i(u)=\cP_j(v)$ (though it does not witness what this common set is). From this it is then clear that $W$ witnesses $N_{u,v}$ and $N_{v,u}$, thereby completing the proof of~\eqref{eq:tree-coding-radius}.
To do this, we shall show that there exist two subsets $A_u,A_v \subset \V$ such that $W$ witnesses the event $\{ A_u$ is contained in an ancestor cluster of $u$, $A_v$ is contained in an ancestor cluster of $v$, and $A_u$ and $A_v$ are connected$\}$.

Let us now try to repeat the above argument in the case when $i=n$. Since we cannot find $\cP_n(u)=C_\infty$ in a finitary manner, we aim to find a set $A_u$ as above, that is, a set which is guaranteed to belong to $\cP_n(u;\omega')$ for any $\omega'$ which agrees with $\omega$ on the edges incident to $W$ (though there is no guarantee that $\cP_n(u;\omega')=C_\infty(\omega')$; indeed, it is not possible to guarantee this).
Similarly to before, $C_\infty = \cP_n(u) = V_k(\cP_{n-1}(u))$, where $k:=k(\cP_{n-1}(u))$ is the minimal number such that $\diam V_k(\cP_{n-1}(u)) \ge k \ge 5\diam \cP_{n-1}(u)$.
Let $d$ be the largest diameter of a finite cluster intersecting $B_k(\cP_{n-1}(u))$.
Let $\ell := L_{w,k+\diam \cP_{n-1}(u)}$, where $w$ is a vertex of $\cP_{n-1}(u)$ closest to $u$. Note that this definition ensures that $B_k(\cP_{n-1}(u)) \cap C_\infty$ (which is necessarily non-empty, but may contain more than one vertex) is contained in a single connected component of $B_{d+\ell}(\cP_{n-1}(u)) \cap C_\infty$, and that this component has diameter strictly larger than $d$. Let $A_u$ denote this component.
It is straightforward to verify that $B_{k+d+\ell}(\cP_{n-1}(u))$ witnesses $k$ and $A_u$. Similarly to before, to deduce that $W$ witnesses $A_u$, we need only show that
\[ \dist(u,\cP_{n-1}(u)) + \diam \cP_{n-1}(u) + k+d+\ell \le 7R_u+6M_u+M'_u+L_* .\]
Let us give several inequalities which easily imply this. First, $\dist(u,\cP_{n-1}(u)) \le 3R_u$ by~\eqref{eq:tree-dist} and~\eqref{eq:triangle}. Second, $\diam \cP_{n-1}(u) \le M_u$ by definition of $M_u$. Third, by~\eqref{eq:tree-dist} and~\eqref{eq:triangle},
\begin{align*}
k &= \max\{5\diam \cP_{n-1}(u), \dist(\cP_{n-1}(u),C_\infty)\} \\
  &\le \max\{5M_u,\dist(C_u,C_\infty)+\diam C_u + \dist(C_u,\cP_{n-1}(u))\} \le \max\{5M_u,4R_u\} .
\end{align*}
Fourth, since $\dist(w,u) = \dist(u,\cP_{n-1}(u)) \le 3R_u$, we have $\ell \le L_{u,7R_u+6M_u} \le L_*$.
Fifth, since $\dist(u,\cP_{n-1}(u))+\diam \cP_{n-1}(u)+k \le 7R_u+6M_u$, we have $d \le M'_u$ by definition of $M'_u$.
We also note that $\dist(u,A_u) \le \dist(u,\cP_{n-1}(u)) + \diam \cP_{n-1}(u) + k \le 7R_u + 6M_u$.

The argument for finding $A_v$ is analogous ($L_*$ is the only non-symmetric term, and so we only note that $L_{v,7R_v+6M_v} \le L_*$ holds). It remains to show that $W$ witnesses that $A_u$ and $A_v$ belong to the same cluster. To this end, it suffices to show that $A_u$ and $A_v$ are connected inside $W$.
This will follow from the definition of $L_*$ once we show that both $A_u$ and $A_v$ are at distance at most $\dist(u,v)+7R_u+7R_v+6M_u+6M_v$ from~$u$.
Indeed, this follows from $\dist(u,A_u) \le 7R_u+6M_u$ and $\dist(v,A_v) \le 7R_v+6M_v$, which we have just shown. This completes the proof of~\eqref{eq:tree-coding-radius} and hence also of the proposition.
\end{proof}

\subsection{Gradient of spins as a finitary factor}
\label{sec:general-gradient}

Fix an integer $q \ge 2$ and let $\omega$ be a random percolation configuration in $\{0,1\}^E$. Construct a random spin configuration $\sigma \in \{0,\dots,q-1\}^\V$ by assigning a spin to each vertex so that, conditionally on $\omega$,
\begin{itemize}
 \itemsep 0pt
 \item spins belonging to the same cluster are equal,
 \item spins belonging to different clusters are independent,
 \item spins belonging to finite clusters are distributed uniformly in $\{0,\dots,q-1\}$,
 \item spins belonging to an infinite cluster are 0 (or any other fixed value).
\end{itemize} 
For an oriented edge $e = (u,v)$, define $(\nabla \sigma)_e = \sigma_v - \sigma_u$ mod $q$.

\begin{thm}\label{thm:grad_general}
Suppose that $\omega$ is a random percolation process on $G$ which
almost surely has a unique infinite cluster. Let $q \ge 2$ and define a spin configuration $\sigma$ as above.
Then $\nabla \sigma$ is a finitary factor of $(\omega,\xi)$, where $\xi$ is an \iid\ process independent of $\omega$. In particular, if $\omega$ is \ffiid, then so is $\nabla \sigma$. Moreover, if $\omega$ satisfies~\eqref{eq:tree-exp-decay-cond} and is \ffiid\ with a coding radius having exponential tails, then so is $\nabla \sigma$.
\end{thm}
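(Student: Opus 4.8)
The plan is to combine \cref{prop:tree} with the coloring procedure, using the cluster-tree to encode spins as \emph{increments along tree edges} rather than absolute values. Concretely, given $(\omega,\xi)$ with $\xi$ an \iid\ family (we will use one uniform-in-$\{0,\dots,q-1\}$ variable $\xi_C$ attached to each finite cluster $C$, which can be read off from $\xi$ in an equivariant finitary way by, say, letting $\xi_C = \xi_{w}$ where $w$ is the minimal vertex of $C$ in some fixed exploration order), apply the universal cluster-tree factor map $\cT$ from \cref{prop:tree}. Define $\tilde\sigma_C := \xi_C$ for every finite cluster and $\tilde\sigma_{C_\infty} := 0$, and then declare the ``true'' spin of a cluster $C$ to be the sum of $\xi$-increments along the tree path from $C$ up to the root: $\sigma_C := \sum_{i \ge 0 : \cP_i(C) \text{ finite}} \xi_{\cP_i(C)} \bmod q$. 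One checks this reproduces exactly the target joint law of $\sigma$: the infinite cluster gets spin $0$; and conditionally on $\omega$, the map $(\xi_C)_{C \text{ finite}} \mapsto (\sigma_C)_{C \text{ finite}}$ is a bijection on $(\Z/q)^{\{\text{finite clusters}\}}$ (it is ``unipotent lower-triangular'' with respect to the partial order given by tree-depth), hence pushes the i.i.d.\ uniform measure to the i.i.d.\ uniform measure, and same-cluster vertices share a spin by construction.

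The key point is that $\nabla\sigma$ can be computed \emph{finitarily} even though individual absolute spins cannot. For an oriented edge $e=(u,v)$, the value $(\nabla\sigma)_e = \sigma_{C_v} - \sigma_{C_u} \bmod q$ telescopes along the tree: walking up from $C_u$ to $\cA(u,v)$ and back down to $C_v$, all contributions from $\cA(u,v)$ and its ancestors cancel, giving
\begin{equation}\label{eq:grad-telescope}
(\nabla\sigma)_e \;=\; \sum_{i=0}^{N_{u,v}-1} \xi_{\cP_i(u)} \;-\; \sum_{j=0}^{N_{v,u}-1} \xi_{\cP_j(v)} \pmod q,
\end{equation}
where all clusters $\cP_i(u)$, $i<N_{u,v}$, and $\cP_j(v)$, $j<N_{v,u}$, are finite by definition of $N_{u,v},N_{v,u}$. (If $u,v$ lie in the same cluster, both sums are empty and $(\nabla\sigma)_e=0$, consistently.) The right-hand side of \eqref{eq:grad-telescope} is precisely a function of the object $g(\omega)$ from the excerpt — namely $N_{u,v}$, $N_{v,u}$, and the two finite-cluster sequences — together with the finitely many $\xi$-values attached to those clusters. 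By \cref{prop:tree}, $g(\omega)$ is found in a finitary manner with witness the union of balls $B_{R_{u,v}}(u) \cup B_{R_{u,v}}(v)$; reading off $\xi$ on (a fixed exploration rule applied to) those explicitly-determined finite clusters requires no extra radius beyond what already witnesses them. Hence the coding radius for $(\nabla\sigma)_e$ is at most $R_{u,v}$ (up to an additive constant depending only on the edge $e$, i.e.\ on $\dist(u,v)$, which is fixed), which is almost surely finite. This establishes that $\nabla\sigma$ is a finitary factor of $(\omega,\xi)$, and equivariance is immediate since $\cT$, the cluster-labelling rule, and \eqref{eq:grad-telescope} are all built from equivariant ingredients.

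For the ``in particular'' clause: if $\omega$ is itself \ffiid, say $\omega = \psi(Y)$ for an i.i.d.\ process $Y$ via a finitary equivariant $\psi$, then take $\xi$ to be an independent i.i.d.\ process on $\V$ and feed $(Y,\xi)$ (an i.i.d.\ process) into the composition that first produces $\omega$ and then applies the above finitary factor; since a finitary factor of a finitary factor is finitary (the coding radius of the composition is the $\psi$-coding-radius of the $\omega$-witness ball, which is a.s.\ finite), $\nabla\sigma$ is \ffiid. For the ``moreover'' clause, assume in addition that $\omega$ satisfies \eqref{eq:tree-exp-decay-cond} and is \ffiid\ with coding radius having exponential tails. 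Then \cref{prop:tree} gives that $R_{u,v}$ has exponential tails, so the coding radius of $\nabla\sigma$ as a factor of $(\omega,\xi)$ has exponential tails; composing with $\psi$, the final coding radius is the maximum over a ball of radius $R_{u,v}$ of the $\psi$-coding-radii, and since $R_{u,v}$, the ball volume growth, and the $\psi$-radii are all exponentially controlled, a standard union bound (exactly as in the proof of the ``moreover'' part of \cref{prop:tree}) yields exponential tails for the composed coding radius.

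The main obstacle — and the only genuinely non-routine step — is verifying that the increment-to-absolute-spin map is measure-preserving with the correct conditional law, i.e.\ that defining $\sigma_C$ by the tree-path sum reproduces \emph{exactly} the four bulleted properties defining $\sigma$, including independence across distinct clusters. The subtlety is that the $\sigma_C$'s are built from \emph{shared} $\xi$-values along overlapping ancestor paths, so their joint independence is not obvious; the resolution is the triangularity observation above: enumerating finite clusters in any order consistent with tree-depth (ancestors before descendants), the map $(\xi_C) \mapsto (\sigma_C)$ is triangular with identity diagonal blocks, hence a bijection of $(\Z/q)^{\text{finite clusters}}$, and therefore transports the product-uniform law to itself. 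Once this algebraic fact is in hand, everything else is bookkeeping with \cref{prop:tree} and the standard stability of the \ffiid\ class (and of exponential-tail coding radii) under composition.
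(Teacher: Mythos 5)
Your proposal is correct and takes essentially the same route as the paper's proof: spins are attached to finite clusters of the tree from \cref{prop:tree} as increments relative to the parent, the law of $\sigma$ is verified by the same triangularity/uniformity observation, the gradient telescopes to the two ancestor paths below the lowest common ancestor, and the finitary and exponential-tail claims follow from \cref{prop:tree} plus stability of (exponential-tail) finitary codings under composition. The only small repair needed is your choice of cluster representative: a ``minimal vertex in some fixed exploration order'' is not equivariant under the full automorphism group of $G$, which is why the paper instead attaches auxiliary \iid\ uniform labels $U_v$ and takes the vertex of minimal label --- a trivial modification of your scheme.
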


\begin{proof}
Let $\xi=(U_v,Y_v)_{v \in \V}$ be an \iid\ process, independent of $\omega$, where $U_v \sim \text{Unif}[0,1]$ and $Y_v \sim \text{Unif}\{0,1,\ldots,q-1\}$ are independent. Let $\cT$ be the cluster-tree factor map from \cref{prop:tree}. When $C$ is finite cluster, we define $Y_C$ to be the variable $Y_v$ where $v$ is the vertex in $C$ with minimal $U_v$. 
We stress that the spin $Y_C$ will not correspond to the spin of $C$ in $\sigma$, but rather indicates the spin relative to its parent cluster.
To define this precisely, we associate a spin to each edge of the tree $\cT(\omega)$ by setting $Y_{C,\cP(C)} = Y_C$ if $C$ is a finite cluster. When $C'$ is an ancestor of $C$, we define $Y_{C,C'}$ to be the sum of spins along the edges from $C$ to $C'$. In particular, $Y_{C,C}=0$ for any cluster $C$, including the infinite cluster $C_\infty$.

For every vertex $v \in \V$, we define 
\begin{equation*}
\sigma_v  = Y_{C_v,C_\infty}  \mod q.		
\end{equation*}
We claim that $\sigma$ has the desired distribution. To see this, note that $\sigma_u=\sigma_v$ whenever $u$ and $v$ are in the same cluster, that $\sigma_v=0$ whenever $v \in C_\infty$, and that for any finite collection of finite clusters $C_1,\dots,C_n$, the variables $\{Y_{C_i,C_\infty}\}_{1 \le i \le n}$ are independent and uniformly distributed (mod $q$). Indeed, note that some cluster, say $C_1$, will have no descendants in $C_1,\dots,C_n$, and it is clear that in this case, $Y_{C_1,C_\infty}$ is uniform conditioned on $(Y_C)_{C \neq C_1}$.

Note that this already shows that $\sigma$ is a (non-finitary) factor of $(\omega,\xi)$. This representation allows for a natural way to interpret $\nabla \sigma$, namely, for every oriented edge $e = (u,v)$, we have
\begin{align*}
\nabla \sigma_e
 &= Y_{C_v,C_\infty} - Y_{C_u,C_\infty} \mod q \\
 &= Y_{C_v,\cA(u,v)} - Y_{C_u,\cA(u,v)} \mod q .
\end{align*}
Indeed, this is straightforward from the definitions.

It remains to show that $\nabla \sigma$ is a finitary factor of $(\omega,\xi)$. As we have mentioned, $\sigma$ a factor of $(\omega,\xi)$, and hence, $\nabla\sigma$ is also a factor of $(\omega,\xi)$. Thus, we need only show that the latter factor is finitary, i.e., that $\nabla\sigma_e$ can be determined in a finitary manner for any oriented edge $e = (u,v)$. By the formula above, in order to determine $\nabla \sigma_e$, it suffices to determine $Y_{C_u,\cA(u,v)}$ and $Y_{C_v,\cA(u,v)}$.
Let us explain how $Y_{C_u,\cA(u,v)}$ can be determined in a finitary manner (the argument for $Y_{C_v,\cA(u,v)}$ being the same).
By definition,
\[ Y_{C_u,\cA(u,v)} = Y_{\cP_0(u),\cP_{N_{u,v}}(u)} = \sum_{i=0}^{N_{u,v}-1}Y_{\cP_i(u), \cP_{i+1}(u)}  =  \sum_{i=0}^{N_{u,v}-1} Y_{\cP_i(u)} .\]
By \cref{prop:tree}, the map $\cT$ has the property that $R_{u,v}$ is almost surely finite. In particular, we can almost surely find $N_{u,v}$ and $(\cP_i(u))_{0 \le i < N_{u,v}}$ in a finitary manner. Hence, it suffices to show that we can determine $Y_C$ in a finitary manner for every $C \in \{\cP_i(u)\}_{0 \le i < N_{u,v}}$. This is clear from the definition of $Y_C$ and the fact that these clusters are almost surely finite.

Finally, towards showing the moreover part, suppose that $\omega$ satisfies~\eqref{eq:tree-exp-decay-cond} and is \ffiid\ with a coding radius having exponential tails. Then, by \cref{prop:tree}, the coding radius needed to determine $N_{u,v}$ and $(\cP_i(u))_{0 \le i < N_{u,v}}$ from $\omega$ has exponential tails (note that $\dist(u,v)=1$ here, so that the distinction between balls centered around $u$ or $v$ is not important). Since this coding radius is always large enough so that the ball of this radius around $u$ completely contains the clusters $\{\cP_i(u)\}_{0 \le i < N_{u,v}}$, it is easy to see that it also allows to determine $Y_{\cP_i(u)}$ from $(\omega,\xi)$ for every $0 \le i < N_{u,v}$. Therefore, the coding radius for determining $\nabla\sigma_e$ from $(\omega,\xi)$ has exponential tails. Since $\omega$ is \ffiid\ with exponential tails, and since the composition of finitary factors with exponential tails is also such (see~\cite[Lemma~3.3]{harel2018finitary}), we conclude that $\nabla\sigma$ is \ffiid\ with exponential tails.
\end{proof}

\begin{remark}
The proof of \cref{thm:grad_general} easily extends to the situation in which the spin space $\{0,\dots,q-1\}$ is replaced with any finite group.
\end{remark}

\section{The Ising and Potts models}\label{sec:Ising}

The (ferromagnetic) Potts model on $\Z^d$ with $q \in \{2,3,\ldots\}$ states and inverse temperature $\beta \ge 0$ is defined as follows. Given a finite set $V \subset \Z^d$ and a configuration $\tau \in \{0,\dots,q-1\}^{\Z^d}$, the finite-volume Gibbs measure in $V$ with boundary condition $\tau$ is the probability measure $P^\tau_V$ on $\{0,\ldots,q-1\}^{\Z^d}$ defined by
\begin{equation}\label{eq:Potts-spec}
P^\tau_V(\sigma) = \frac{1}{Z^\tau_V} \cdot e^{\beta H_V(\sigma)} \cdot \1_{\{\sigma=\tau\text{ outside }V\}} , \qquad\text{where }H_V(\sigma) = \sum_{\substack{\{u,v\} \in E(\Z^d)\\\{u,v\} \cap V \neq \emptyset}} \1_{\{\sigma_u=\sigma_v\}} .
\end{equation}
Here $Z^\tau_V$ (which also depends on $q$ and $\beta$) is a normalization constant.
A \emph{Gibbs measure} for the Potts model is a probability measure $\mu$ on $\{0,\dots,q-1\}^{\Z^d}$ such that a random configuration $\sigma$ distributed according to $\mu$ has the property that, for any finite $V \subset \Z^d$, conditioned on the restriction $\sigma|_{V^c}$, $\sigma$ is almost surely distributed according to $P^\sigma_V$.

Consider the Ising model on $\Z^d$ with $d \ge 2$ -- this is the special case of the Potts model in which $q = 2$. In this case, it is common to let the spin values be $\{-1,1\}$, rather than $\{0,1\}$.
It is well known~(see, e.g., \cite[Theorem~3.1]{georgii2001random} or \cite[pages 189-190 and 204]{liggett2012interacting}) that there exists a critical value $\beta_c(d) \in (0,\infty)$ such that there is a unique Gibbs measure for the Ising model on $\Z^d$  at inverse temperature $\beta<\beta_c(d)$ and multiple such Gibbs measures at inverse temperature $\beta>\beta_c(d)$. It has also been established that there is a unique Gibbs measure at the critical point $\beta=\beta_c(d)$ (see \cite{yang1952spontaneous,aizenman2015random,aizenman1986critical}).
It is also well known that, when $\beta>\beta_c(d)$, there exist two distinct extremal Gibbs states, called the plus state and the minus state, obtained as limits of $P^\tau_V$ as $V$ increases to $\Z^d$ with the boundary condition $\tau$ being the all plus or all minus configuration.

It was shown by van den Berg and Steif~\cite{van1999existence} that this model (more precisely, the plus or minus state) is ffiid if and only if there is a unique Gibbs measure, i.e., if and only if $\beta \le \beta_c(d)$. We show in this paper that a slight dilution of information makes this model ffiid even when $\beta > \beta_c(d)$. Specifically, we consider here the gradient of the Ising model -- the percolation configuration consisting of all edges whose endpoints have different spins. More precisely, given an Ising spin configuration $\sigma \in \{-1,1\}^{\Z^d}$, we consider the percolation configuration $\nabla \sigma \in  \{0,1\}^{E(\Z^d)}$ defined by
\[ (\nabla \sigma)_{\{u,v\}} = \1_{\{\sigma_u \neq \sigma_v\}} .\]
Thus, a Gibbs measure for the Ising model induces a probability measure on $\{0,1\}^{E(\Z^d)}$ via the map $\sigma \mapsto \nabla \sigma$. We call the percolation measure induced by the plus state of the Ising model the \emph{gradient of Ising}. Note that since the minus state is obtained from the plus state by flipping all the spins, the minus state induces the same percolation measure.

\begin{thm}\label{thm:main_Ising}
Let $d \ge 2$ and $\beta > \beta_c(d)$.
The gradient of Ising on $\Z^d$ at inverse temperature~$\beta$ is \ffiid\ with a coding radius having exponential tails.
\end{thm}

Let us mention a simple consequence of \cref{thm:main_Ising} and the previously stated fact that the Ising model is \ffiid\ for all $\beta \le \beta_c(d)$. It is a general fact that any \ffiid\ random field satisfies the ergodic theorem with an exponential rate of convergence~\cite{bosco2010exponential}. Applied to the gradient of the Ising, this yields a volume-order large deviation estimate for the energy $H_V(\sigma)$ defined in~\eqref{eq:Potts-spec}. This does not require any quantitative information on the coding radius, and hence applies also at criticality. This is the content of the following corollary.

\begin{corollary}\label{cor:energy}
Let $d \ge 2$ and $\beta>0$, and consider the plus state $\mu$ for the Ising model on $\Z^d$ at inverse temperature $\beta$. Denote $m=\mu(\sigma_u=\sigma_v)$, where $u$ and $v$ are adjacent vertices. Let $\Lambda_n$ denote the box $[-n,n]^d$ and let $E_n$ denote the set of edges of $\Z^d$ intersecting $\Lambda_n$. Then, for any $\epsilon>0$ there exists $c>0$ such that
 \[ \mu\left( \left| \tfrac{H_{\Lambda_n}(\sigma)}{|E_n|} - m \right| > \epsilon \right) \le e^{-cn^d} \qquad\text{for all }n \ge 1 .\]
\end{corollary}

Large deviation principles have been shown to hold for various random fields~\cite{follmer1988large,georgii1993large}, including the Ising model, but to the best of our knowledge, the fact that the rate is positive for the energy functional is new.
We remark that a similar result holds for the other models for which we prove a finitary coding result, but we will not state these explicitly for those models. 

\smallskip

Consider now the $q$-state Potts model on $\Z^d$ with $d \ge 2$ and $q \ge 3$.
It is well known~(see, e.g., \cite[Theorem~3.2]{georgii2001random}) that there exists a critical value $\beta_c=\beta_c(q,d) \in (0,\infty)$ such that there is a unique Gibbs measure at inverse temperature $\beta<\beta_c$ and multiple Gibbs measures at inverse temperature $\beta>\beta_c$.
In two dimensions, it is also known that there is a unique Gibbs measure at the critical $\beta=\beta_c$ if and only if $q \le 4$~\cite{duminil2016discontinuity,duminil2017continuity,ray2019short}.
It is also well known that, in any dimension, when multiple Gibbs measures exist, there are at least $q$ such measures (one for each spin value), obtained as limits of $P^\tau_V$ with constant boundary conditions. These measures may be obtained from one another by applying a permutation to the spin values.

It follows from results of Harel and the second author~\cite{spinka2018finitarymrf,harel2018finitary} that this model (more precisely, any constant boundary condition Gibbs measure) is \ffiid\ if and only if there is a unique Gibbs measure.
Thus, as for the Ising model, the Potts model is not \ffiid\ at low temperature $\beta> \beta_c(d,q)$, and the reason for this is similar to the one in the Ising case. We are therefore led to consider a gradient of the Potts model.
One possibility, which is a natural generalization of the gradient in the Ising case, is to consider the percolation configuration consisting of edges whose endpoints have different spins. We instead choose a different (also natural) extension of the definition -- one which preserves more information on the relative spin values at the endpoints of an edge. Specifically, given a Potts configuration $\sigma \in \{0,\dots,q-1\}^{Z^d}$, the gradient of $\sigma$ is a configuration living on $\vec E(\Z^d)$, the \emph{oriented} edges of $\Z^d$, and is defined by
\[ (\nabla\sigma)_{(u,v)} = \sigma_v - \sigma_u \mod q \]
for an oriented edge $(u,v) \in \vec E(\Z^d)$.
A Gibbs measure for the Potts model induces a probability measure on $\{0,\dots,q-1\}^{\vec E(\Z^d)}$ via the map $\sigma \mapsto \nabla \sigma$. We call the measure induced by a constant boundary condition Gibbs state (any constant boundary condition induces the same measure) the \emph{gradient of Potts}. Note that when $q=2$, this is essentially the same as the gradient of Ising, except that the latter has unoriented edges.

The Potts model is closely related to the random-cluster model. We briefly recall this here and refer to the book of Grimmett \cite{grimmett2006random} for a more comprehensive treatment of this model. The random-cluster measure with boundary condition $\tau \in \{0,1\}^{E(\Z^d)}$ and parameters $q>0$ and $p \in [0,1]$ in a finite subset $\Lambda \subset E(\Z^d)$ is given by
\begin{equation}
\P^{\text{FK},\tau}_{\Lambda,p,q} (\omega) = \frac1{Z^{\text{FK},\tau}_{\Lambda,p,q}} \cdot p^{o_\Lambda(\omega)}(1-p)^{c_\Lambda(\omega)} q^{k_\Lambda(\omega)} \cdot \1_{\{\omega=\tau\text{ outside }\Lambda\}}, \label{eq:FK}
\end{equation}
where $o_\Lambda(\omega)$ and $c_\Lambda(\omega)$ are the number of open and closed edges, respectively, of $\omega$ in $\Lambda$, $k_{\Lambda}(\omega)$ is the number of vertex-clusters of $\omega$ intersecting $\Lambda$, and $Z^{\text{FK},\tau}_{\Lambda,p,q}$ is the appropriate partition function. If $\tau$ is specified to be all edges open (resp.\ closed), then the 
resulting measure is called the \emph{wired} (resp.\ \emph{free}) measure.  We denote the wired and the free measure by $\P^{\text{FK},\text{w}}_{\Lambda, p,q}$ and $\P^{\text{FK},\text{f}}_{\Lambda, p,q}$ respectively. The random-cluster measures satisfy several monotonicity properties; of relevance here is the monotonicity in boundary conditions (FKG), namely, opening more edges in $\tau$ stochastically increases $\P^{\text{FK},\tau}_{\Lambda,p,q}$. This implies in particular that the weak limits of $\P^{\text{FK},\text{w}}_{\Lambda, p,q}$ and $\P^{\text{FK},\text{f}}_{\Lambda, p,q}$ as $\Lambda \uparrow \Z^d$ exist (see~\cite[Theorem~4.19]{grimmett2006random}). The two limiting measures, called the wired and free random-cluster measures, are denoted by $\P^{\text{FK},\text{w}}_{p,q}$ and $\P^{\text{FK},\text{f}}_{p,q}$, respectively.

We prove that the gradient of the Potts model is \ffiid\ when the corresponding random-cluster model has a unique Gibbs state (i.e., the free and wired measures coincide), and that this condition is also necessary.

\begin{thm}\label{thm:main_potts}
Let $d \ge 2$ and $q \ge 2$ be integers, let $\beta \ge 0$ and set $p := 1-e^{-\beta}$. The gradient of the $q$-state Potts model on $\Z^d$ at inverse temperature $\beta$ is \ffiid\ if and only if the free and wired random-cluster measures with parameters $q$ and $p$ coincide.
\end{thm}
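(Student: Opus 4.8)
The plan is to leverage the Edwards--Sokal coupling between the Potts model and the random-cluster model, which is the natural ``graphical representation'' backbone for the whole paper. First I would handle the positive direction, assuming the free and wired random-cluster measures with parameters $q$ and $p=1-e^{-\beta}$ coincide (call the common measure $\phi$). The key point is that the \emph{gradient} of a constant boundary condition Potts state should be recoverable from (i) the random-cluster configuration $\omega$ and (ii) an i.i.d.\ family of uniform spins, one per cluster of $\omega$, \emph{restricted to the information that survives the quotient by a global spin rotation}. Concretely: in the Edwards--Sokal coupling, each cluster of $\omega$ gets an independent uniform spin in $\Z/q\Z$; the gradient $(\nabla\sigma)_{(u,v)}=\sigma_v-\sigma_u \bmod q$ is zero across every edge inside a cluster and equals the difference of the two cluster-spins across a boundary edge. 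So the gradient of Potts is a deterministic function of $(\omega, \text{relative spins of clusters})$, and the relative spins of clusters form an i.i.d.-like labeling once we fix a root cluster's spin arbitrarily. The strategy is therefore to (a) produce a finitary coding of the random-cluster measure $\phi$ from i.i.d.\ — this is where I invoke the hypothesis that free $=$ wired, since by the results of Harel and the second author (cited as \cite{spinka2018finitarymrf,harel2018finitary}), a monotone/FKG model with a unique Gibbs state is \ffiid, and the random-cluster model with $q\ge 1$ is monotone with free $=$ wired being exactly uniqueness; (b) given $\omega$, reveal its cluster structure piece by piece and attach independent uniform spins to clusters — but the subtlety flagged in the abstract is that a cluster may be infinite or one may need a coherent ``tree structure on the clusters'' so that spins can be assigned finitarily; (c) output the induced gradient. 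The heart of the matter, as the abstract itself says, is step (b): devising a tree structure on the clusters of the percolation process that can be revealed by exploration so that the spin labels — and hence the gradient values — have finite coding radius. I would expect to assign to the origin's cluster (or the cluster adjacent to the origin's edge) a canonical ``parent'' chain and use the i.i.d.\ noise along that chain to define relative spins; finiteness of the coding radius then reduces to the (almost-sure) finiteness of the exploration, which should follow from the finitary coding of $\phi$ plus a Borel--Cantelli / mass-transport argument controlling the sizes of the relevant finite pieces.

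For the converse direction — if free $\ne$ wired then the gradient of Potts is \emph{not} \ffiid\ — the plan is a contradiction argument of the type used by van den Berg--Steif. Suppose the gradient of Potts were \ffiid. Note first that from the gradient one can reconstruct the full Potts spin configuration up to a global rotation of $\Z/q\Z$: fix $\sigma_\zero=0$ and integrate the gradient along paths (this is consistent because $\nabla\sigma$ is an exact discrete form — the sum around any cycle of $\Z^d$ is $0\bmod q$, since it telescopes). So a finitary coding of $\nabla\sigma$ together with a single extra uniform symbol (the spin at the origin) would give a finitary coding of the Potts spin configuration itself sampled from a mixture of the $q$ constant-b.c.\ states; but since those $q$ states are distinct and differ by a global permutation, a short tail bound forces two far-apart regions to ``decide'' incompatibly, contradicting that the coding radius is a.s.\ finite. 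To make this rigorous I would argue that \ffiid\ of $\nabla\sigma$ implies the gradient measure is tail-trivial and a.s.\ ``locally coded'', and then show that free $\ne$ wired produces long-range order in the gradient (nonzero density of infinite clusters / a nonvanishing two-point function for ``$\sigma_u=\sigma_v$'' differences) that is incompatible with finite coding radius — essentially, the existence of a nontrivial infinite cluster structure means the relative spin of two clusters at distance $R$ is not determined by a ball of radius $o(R)$. The cleanest route is probably: \ffiid\ $\Rightarrow$ the gradient process is a finitary factor $\Rightarrow$ it is ``finitely dependent up to exponential tails'' in a suitable sense $\Rightarrow$ its associated random-cluster model (recovered as the support of $\nabla\sigma$, i.e.\ the edges where the gradient is nonzero) would also be \ffiid, which by \cite{van1999existence,spinka2018finitarymrf,harel2018finitary} forces uniqueness of the random-cluster Gibbs state, i.e.\ free $=$ wired.

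The main obstacle I anticipate is the positive direction's step (b): building the exploration-revealable tree structure on random-cluster clusters and proving the resulting coding radius is a.s.\ finite. Infinite clusters are the enemy — when $\beta$ is large there is a unique infinite cluster in $\omega$ (even when free $=$ wired can still hold, e.g.\ for $q=2$ above $\beta_c$ where the RC model is unique but percolates), and one cannot simply ``pick a root and integrate'' inside an infinite cluster. The resolution must exploit that \emph{inside} a cluster the gradient is identically $0$, so the only data needed are relative spins of \emph{distinct} clusters, and two clusters adjacent across a single edge are canonically comparable; the tree is then on the ``cluster adjacency graph'' and one needs this to be locally finite and explorable with good tail bounds, which is exactly the kind of delicate percolation-geometry input the paper says is its technical heart. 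A secondary, more routine obstacle is verifying the RC model genuinely falls under the hypotheses of the monotone-model \ffiid\ theorem (FKG, finite energy, the precise meaning of ``unique Gibbs state'' matching ``free $=$ wired''), and making the Edwards--Sokal coupling equivariant under the automorphism group of $\Z^d$ so that the whole composition is a legitimate $\Gamma$-factor map — this is standard but must be stated carefully.
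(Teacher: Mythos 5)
Your overall skeleton matches the paper's: use the Edwards--Sokal coupling in both directions, invoke the Harel--Spinka results so that the random-cluster configuration $\omega$ is \ffiid\ exactly when the free and wired measures coincide, and reduce the positive direction to assigning relative spins to clusters via a tree structure revealed by exploration. But in the positive direction there is a genuine gap: the tree construction \emph{is} the technical heart (the paper's \cref{prop:tree} and \cref{thm:grad_general}), and you only anticipate it rather than supply it. Moreover, the sketch you do give points in a direction that does not work as stated: you propose a tree on the ``cluster adjacency graph'' and speak of fixing ``a root cluster's spin arbitrarily'', but the whole difficulty is that membership in the infinite cluster cannot be witnessed finitarily, so one can neither identify the root nor know one's distance to it, and adjacency of clusters is not the right notion of parenthood. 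The paper's resolution is different: the parent of a finite cluster $C$ is the largest-diameter cluster intersecting the $k$-neighborhood of $C$, for the minimal $k$ with that diameter at least $k$ and $k \ge 5\diam C$; the \iid\ label of each finite cluster is interpreted as its spin \emph{relative to its parent}; and the gradient across an edge $(u,v)$ is obtained by summing labels along the two branches of the tree up to the lowest common ancestor, whose own spin cancels --- so one never needs to decide whether a given cluster is the infinite one. Proving that this data has almost surely finite (and, under~\eqref{eq:tree-exp-decay-cond}, exponentially controlled) coding radius is exactly \cref{prop:tree}, and the Borel--Cantelli/mass-transport gesture you make does not substitute for that construction and its witness analysis.

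In the converse direction your ``cleanest route'' is indeed the paper's argument, but it contains a concrete error: the random-cluster configuration is \emph{not} the support of $\nabla\sigma$ (the disagreement edges). In the Edwards--Sokal coupling, $\omega$ is obtained from $\sigma$ by independent Bernoulli-$p$ percolation on the \emph{agreement} edges, i.e.\ on the constant-spin clusters. The argument survives after this correction: the constant-spin clusters are a function of $\nabla\sigma$, so if $\nabla\sigma$ were \ffiid\ then $\omega$, being a finitary factor of $\nabla\sigma$ together with independent coins, would be \ffiid, contradicting \cite[Theorem~2]{harel2018finitary} when the free and wired measures differ. Your first sketch for the converse (reconstructing $\sigma$ up to a global rotation and arguing that far-apart regions ``decide incompatibly'') is too vague to constitute a proof and is unnecessary once the above is in place.
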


Let us briefly discuss the condition in the theorem, namely, uniqueness of the Gibbs state for the random-cluster model. It is well known that for any $d \ge 2$ and $q \ge 1$, there is a critical parameter $p_c(q,d) \in (0,\infty)$ for the existence of infinite clusters in this model (see~\cite[Section~5.1]{grimmett2006random}). Actually, for integer $q \ge 2$, we have the relation $p_c(q,d) = 1-e^{-\beta_c(q,d)}$, where $\beta_c(q,d)$ is the critical inverse temperature for the Potts model. It is known that the random-cluster model admits a unique Gibbs state for all $d \ge 2$, $q>1$ and $p<p_c(q,d)$ (see~\cite[Theorem~5.16]{grimmett2006random}). It is believed that it also has a unique Gibbs state for all $d \ge 2$, $q>1$ and $p>p_c(q,d)$ (see~\cite[Conjecture~5.34]{grimmett2006random}).
This is known to be true in two dimensions (see~\cite[Theorem~6.17]{grimmett2006random}), as well as for the FK-Ising model ($q=2$) in all dimensions~\cite{Bod06}.
In general, for any given $d \ge 3$ and $q>1$, this is known to be the case for all high values of $p$, and for all but at most countably many values of $p>p_c(q,d)$ (see~\cite[Theorem~5.33]{grimmett2006random}). 

Let us also mention that the Potts model has a unique Gibbs state (and is thus \ffiid\ as mentioned above) if and only if the free and wired random-cluster measures coincide and samples of this measure almost surely have no infinite cluster. If the free and wired random-cluster measures coincide, but the samples have an infinite cluster, then the Potts model itself is not \ffiid, but its gradient is. Finally, if the free and wired random-clusters do not coincide, then the gradient of Potts is not \ffiid.

We remark that the proof of \cref{thm:main_potts} can be easily adapted to show that the coding radius has exponential tails when $\beta$ is sufficiently large (as a function of $q$ and $d$). What is needed for this is that the corresponding random-cluster measure has a unique infinite cluster and satisfies~\eqref{eq:tree-exp-decay-cond}, something which can be shown to hold when $p$ is sufficiently close to 1.
We also remark that the part of \cref{thm:main_potts} showing that the gradient of Potts is not \ffiid, shows that it is in fact not $\Gamma$-\ffiid\ for any transitive subgroup $\Gamma$.

Finally, we mention that \cref{thm:main_potts} extends from the case of $\Z^d$ to any transitive locally-finite amenable graph $G$. Since the free and wired random-cluster measures with $q=2$ always coincide on such graphs~\cite{raoufi2017translation}, \cref{thm:main_Ising} also extends to this setting, except perhaps without the additional information on the coding radius.

\begin{proof}[Proof of \cref{thm:main_potts}]
Assume first that the free and wired random-cluster measures coincide and let $\omega$ be sampled from this measure. Recall that in the Edward--Sokal coupling (see, e.g., ~\cite[Theorem~4.91]{grimmett2006random}), given the percolation configuration $\omega$, to obtain a Potts model spin configuration $\sigma$ with constant 0 boundary conditions, we uniformly choose one of the $q$ colors independently for each finite cluster, and set the infinite cluster (if it exists) to have spin 0. By~\cite[Theorem~1.1]{harel2018finitary}, since the free and wired measures coincide, $\omega$ is \ffiid.
The well-known Burton--Keane argument implies that either $\omega$ almost surely has no infinite cluster or it almost surely has a unique infinite cluster.
In the former case (which can only occur when $p \le p_c$), it easily follows from the above description that $\sigma$ (and hence also $\nabla \sigma$) is \ffiid\ (as it is a finitary factor of $\omega$ and some independent \iid\ process). In the latter case, \cref{thm:grad_general} yields that $\nabla \sigma$ is \ffiid.

Assume now that the free and wired random-cluster measures are different. Let $\sigma$ be sampled from the constant 0 boundary condition Gibbs state for the Potts model and assume towards a contradiction that its gradient $\nabla \sigma$ is \ffiid. Recall that in the Edward--Sokal coupling, given the spin configuration $\sigma$, to obtain a sample $\omega$ from the wired random-cluster measure, we perform Bernoulli percolation with parameter $p=1-e^{-\beta}$ on the edges whose endpoints have equal spins in $\sigma$. Since the latter edges are obtained as a (local) function of the gradient $\nabla \sigma$, we see that $\omega$ is \ffiid\ (as it is a finitary factor of $\nabla \sigma$ and some independent \iid\ process).
However, by~\cite[Theorem~1.2]{harel2018finitary}, the wired (and free) random-cluster measure is not \ffiid\ whenever the free and wired measures are different. This leads to a contradiction, thus showing that the gradient of the Potts is not \ffiid.
\end{proof}

\begin{proof}[Proof of \cref{thm:main_Ising}]
Let $\omega$ be sampled from the unique FK-Ising ($q=2$) random-cluster measure with $p>p_c(d)$~\cite{B05}. As in the proof of \cref{thm:main_potts}, using the Edwards--Sokal coupling, we obtain an Ising spin configuration $\sigma$ (with the law of the plus state) by assigning an independent random sign to each finite cluster of $\omega$, and spin $+$ to the infinite cluster. By~\cite[Theorem~1.1]{harel2018finitary}, if the free and wired measures are exponentially close in the sense that
\[ \P^{\text{FK},\text{w}}_{B_n(\zero), p,q}(\omega_\zero=1) - \P^{\text{FK},\text{f}}_{B_n(\zero), p,q}(\omega_\zero=1) \le Ce^{-cn} ,\]
where $B_n(\zero)$ is the ball of radius $n$ around the origin, then $\omega$ is \ffiid\ with a coding radius having exponential tails. Thus, in light of \cref{thm:grad_general}, we need only check that this holds and that~\eqref{eq:tree-exp-decay-cond} holds. The former is shown in~\cite[Theorem~1.3]{duminil2018exponential} for $d \ge 3$ (for $d=2$ this is a simple consequence of planar duality and exponential decay in the subcritical regime~\cite{aizenman1987phase}). To show the latter, we rely on Pisztora's coarse graining approach.

Let $\phi = \P^{\text{FK},\text{w}}_{p,2} = \P^{\text{FK},\text{f}}_{p,2}$ denote the unique infinite-volume random-cluster measure and let $\phi_\Lambda^\xi$ denote the finite-volume measure in $\Lambda$ with boundary condition $\xi$. We recall the following notion of a \emph{good box} from~\cite{duminil2018exponential}. For $x \in \Z^d$, let $\Lambda_k(x)$ be the box around $x$ consisting of vertices at $\ell_\infty$-distance at most $k$ from $x$. Given $\omega$, we say a box $\Lambda_k$ is good if the following two conditions are satisfied:
\begin{enumerate}[{(}a{)}]
\item There exists an open cluster $B$ in $\Lambda_k$ touching all the $2d$ boundary faces of the box.
\item Any open path of length $k$ in $\Lambda_k$ belongs to $B$.
\end{enumerate}
The paper of Pisztora~\cite{pisz96} combined with that of Bodineau~\cite{B05} imply that there exists $c=c(p)>0$ such that for every $k$ and every boundary condition $\xi$,
$$
\phi_{\Lambda_{2k}(x)}^\xi (\Lambda_k(x)  \text{ is good}) \ge 1-e^{-ck}.
$$
(For ease of reference, let us point out that the above is statement (3.7) in Pisztora~\cite{pisz96}, but with $p > \hat p_1$ and $\alpha = 1$, where $\hat p_1$ is defined in (3.5) there, and it is proved in Bodineau~\cite{B05} that $\hat p_1 = p_c$.)

Now fix $\ve>0$ and $k  = k(\ve)$ so that the above event has probability at least $1-\ve$. Consider a site percolation $\eta=(\eta_x)_{x \in \Z^d}$ with a vertex $x$ open if the box $\Lambda_k(kx)$ is good and closed otherwise. It follows that, for any $x \in \Z^d$,
\[ \P(x\text{ is open in }\eta \mid (\eta_y)_{\|y-x\|_\infty \ge 3}) \ge 1-\ve \qquad\text{almost surely} .\]
It is a well-known result of Liggett, Schonmann, and Stacey~\cite[Theorem~0.0]{liggett1997domination} that this property implies that $\eta$ dominates a Bernoulli site percolation of density $1-\tilde \ve$ with $\tilde \ve \to 0$ as $\ve \to 0$. Thus, for small enough $\ve$, there is a unique infinite cluster $D$ in $\eta$, and the diameter of $K_\zero$ has exponential tails, where $K_\zero$ is the  connected component of $\Z^d \setminus D$ containing the origin. Indeed, if the diameter of $K_\zero$ is $n$, then there is a vertex $x$ at distance at most $n$ from $\zero$ which is part of a closed 2-connected component surrounding $\zero$ (blocking any path from $\zero$ to infinity) and thus of diameter at least $n$. A union bound gives the claimed exponential tail.

Let us now show that this yields \eqref{eq:tree-exp-decay-cond}.
By translation invariance of $\omega$, it suffices to prove~\eqref{eq:tree-exp-decay-cond} for the origin $\zero \in \Z^d$.
Let $H$ denote the set of all vertices at distance at most $10k$ from $\bigcup_{x \in K_\zero} \Lambda_k(kx)$.
Note that $\diam H$ is at most a constant (depending on $d$ and $k$) times $\diam K_0+1$, and hence has exponential tails.
Note that if $C_\zero$ is finite and has diameter larger than $k$, then $C_\zero \subset H$ by the properties defining a good box. This shows that $\P(r \le \diam C_\zero < \infty)$ decays exponentially.
Similarly, $\dist(C_\zero,C_\infty)$ is at most $\diam H+k$, so that it also has exponential tails. Finally, for $r$ large enough, $L_{\zero,r} \ge 4r$ implies that there is no surface of good boxes surrounding the origin consisting of boxes $\Lambda_k(kx)$ contained in the annulus $B_{3r}(\zero) \setminus B_{2r}(\zero)$. A union bound yields that this has probability exponentially  small in $r$. This establishes \eqref{eq:tree-exp-decay-cond}.
\end{proof}

\section{The beach model}
\label{sec:beach-model}

We consider here the multi-type beach model with $q \in \{2,3,\dots\}$ types and fugacity $\lambda>0$. The two-type beach model with integer fugacity was first introduced by Burton and Steif~\cite{burton1994non} (in the context of subshifts of finite type) and later extended to multiple types and real activities by Burton, Steif, H\"aggstr\"om and Hallberg~\cite{burton1995new,haggstrom1996phase,haggstrom2000markov,hallberg2004gibbs}.
In the beach model, each site $v$ is assigned a spin $\sigma_v=(\sigma^{\text{s}}_v,\sigma^{\text{t}}_v)$ consisting of a \emph{state} $\sigma^{\text{s}}_v \in \{0,1\}$ and a \emph{type} $\sigma^{\text{t}}_v \in \{0,\dots,q-1\}$.
Thus, a configuration in the beach model is an element $\sigma$ of $(\{0,1\} \times \{0,\dots,q-1\})^{\Z^d}$. 
Such a configuration is \textit{admissible} if any two neighboring spins are either of the same type or are both in a closed state, i.e., if $\sigma^{\text{t}}_u=\sigma^{\text{t}}_v$ or $\sigma^{\text{s}}_u=\sigma^{\text{s}}_v=0$ for any adjacent $u$ and~$v$.
Given a finite set $V \subset \Z^d$ and an admissible configuration $\tau$, the finite-volume Gibbs measure in $V$ with boundary condition $\tau$ is the probability measure $P^\tau_V$, which is supported on admissible configurations $\sigma$ that agree with $\tau$ outside $V$, and satisfying that, for every such $\sigma$,
\begin{equation}\label{eq:beach-spec}
P^\tau_V(\sigma) = \frac{1}{Z^\tau_V} \cdot \lambda^{\sum_{v \in V} \sigma_v^{\text{s}}} \cdot \1_{\{\sigma\text{ admissible}\}}  .
\end{equation}
Here $Z^\tau_V$ (which also depends on $q$ and $\lambda$) is a normalization constant.
A \emph{Gibbs measure} for the beach model is a probability measure $\mu$ on $(\{0,1\} \times \{0,\dots,q-1\})^{\Z^d}$, which is supported on admissible configurations, and such that a random configuration $\sigma$ distributed according to $\mu$ has the property that, for any finite $V \subset \Z^d$, conditioned on the restriction $\sigma|_{V^c}$, $\sigma$ is almost surely distributed according to $P^\sigma_V$.

There is a strong analogy between the multi-type beach model and the Potts model (and similarly between the two-type beach model and the Ising model). For instance, it is known~\cite{haggstrom1996phase,haggstrom1998random,hallberg2004gibbs} that there is a critical fugacity $\lambda_c(d) \in (0,\infty)$ such that there is a unique Gibbs measure at fugacity $\lambda<\lambda_c(d)$ and multiple such Gibbs measures at fugacity $\lambda>\lambda_c(d)$. Moreover, there are at least $q$ extremal Gibbs measures, one for each type, and these measures coincide if and only if there is a unique Gibbs measure.
These measures, which we call the constant-type Gibbs measures, are
obtained as limits of $P^\tau_V$ as $V$ increases to $\Z^d$ with the boundary condition $\tau$ in which all states are $1$ and all types identical. In particular, any two such measures are related to one another by a permutation of the types.
These results are a consequence of the existence of a random-cluster representation for the beach model, introduced by H\"aggstr\"om~\cite{haggstrom1996random,haggstrom1998random}, which serves as a graphical representation for the beach model much like the usual random-cluster model does for the Potts model. This beach-random-cluster model is very similar to the usual random-cluster model (with the notable difference that it lives on sites, not on edges). In particular, it is monotone and thus admits two extremal measures, which we call the free and wired beach-random-cluster measures.

It has been shown that the two-type beach model (more precisely, any constant-type Gibbs measure) is \ffiid\ if and only if there is a unique Gibbs measure (see~\cite[Corollary~1.7]{spinka2018finitarymrf}; the statement there refers to whether $\lambda$ is above or below $\lambda_c$, but the proof only relies on whether the Gibbs measure is unique or not), and the beach-random-cluster representation allows to extend this to the multi-type model. We are therefore led to consider a gradient of the model. The gradient we consider applies only to the types of the spins, leaving the information of their states intact. Precisely, the gradient of the types is the configuration on the oriented edges of $\Z^d$ given by
\[ (\nabla \sigma^{\text{t}})_e = \sigma^{\text{t}}_v - \sigma^{\text{t}}_u \mod q \]
for an oriented edge $(u,v) \in \vec E(\Z^d)$.
The gradient of $\sigma$ is then defined as the pair $\nabla\sigma=(\sigma^{\text{s}},\nabla \sigma^{\text{t}})$.

Continuing the analogy with the Potts model, we prove that the gradient of the beach model is \ffiid\ precisely when the corresponding beach-random-cluster model has a unique Gibbs state.

\begin{thm}\label{thm:main_beach}
Let $d \ge 2$ and $q \ge 2$ be integers and let $\lambda>0$. Let $\sigma$ be sampled from a constant-type Gibbs measure for the $q$-type beach model at fugacity~$\lambda$. The gradient $\nabla\sigma$ is \ffiid\ if and only if the free and wired measures of the associated beach-random-cluster model coincide.
\end{thm}

\begin{proof}
The proof is analogous to that of \cref{thm:main_potts}, with the beach-random-cluster model taking the place of the usual random-cluster model. We do not define this model here and refer to~\cite[Chapter~8]{hallberg2004gibbs} for definitions and results. We only mention that if $\sigma$ is sampled from a constant-type Gibbs measure for the beach model, then $\sigma^s$ has the law of the associated wired beach-random-cluster model.

Assume that the associated beach-random-cluster measure has a unique Gibbs measure (i.e., the free and wired measures coincide), and let $\omega \in \{0,1\}^{\Z^d}$ be sampled from this measure. We note that, unlike the usual random-cluster model, $\omega$ here lives on the sites of $\Z^d$. Since the beach-random-cluster model is monotone, \cite[Theorem~2.1]{harel2018finitary} (which roughly says that a monotone model whose extremal measures coincide is \ffiid) implies that $\omega$ is \ffiid. The Edwards--Sokal-like coupling between the beach model and the beach-random-cluster model (see~\cite[Proposition~8.8]{hallberg2004gibbs}) implies that a sample $\sigma$ from the constant-type-0 Gibbs measure can be obtained from $\omega$ by taking the states of $\sigma$ to be $\sigma^{\text{s}}=\omega$ and choosing the types of $\sigma$ randomly as follows: First let $\omega'$ be the edge percolation configuration in which an edge is open in $\omega'$ if and only if at least one of its endpoints is open in $\omega$, and then assign an independent uniform type in $\{0,\dots,q-1\}$ to each finite cluster of $\omega'$, and type $0$ to the infinite clusters.
Either $\omega'$ almost surely has no infinite cluster, in which case it follows that $\sigma$ itself is \ffiid, or $\omega'$ almost surely has a unique infinite cluster (by the Burton--Keane argument and since $\omega$ has finite energy), in which case it follows from \cref{thm:grad_general} that $\nabla \sigma$ is \ffiid.

Assume now that the free and wired beach-random-cluster measures are different.
Let $\sigma$ be sampled from the constant-type-0 Gibbs measure. Since $\sigma^s$ has the law of the wired beach-random-cluster measure, it suffices to show that the latter is not \ffiid. Indeed, the proof of this for the usual random-cluster model~\cite[Theorem~1.2]{harel2018finitary} easily extends to the beach-random-cluster model (the proof is based on ideas from~\cite{van1999existence}).
We conclude that $\nabla\sigma$ is not \ffiid.
\end{proof}

\section{The six-vertex model}
\label{sec:6v}

\begin{figure}
\centering
\includegraphics[scale = 1.4]{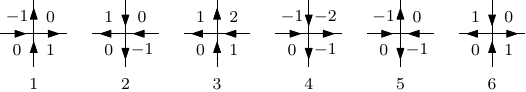}
\caption{The six types of arrow configurations satisfying the ice rule at a vertex, and the corresponding height function (which is assumed in the figure to be 0 on the bottom-left face).}\label{fig:ice}
\end{figure}

The six-vertex model is a model of arrow configurations on the edges of $\Z^2$ satisfying the \emph{ice rule}: at each vertex there are exactly two outgoing and two incoming arrows. This gives rise to one of six configurations (called \emph{types}) at each vertex, as depicted in \cref{fig:ice}. In the general setting, the six-vertex model assigns a different weight to each of the six types of vertices. In the special case considered here --- the so-called \emph{F-model} --- types 1 to 4 have weight 1 and types 5 and 6 have weight $c>0$. Roughly speaking, a six-vertex configuration is then randomly chosen with probability proportional to $c^{\#\{\text{type 5 or 6 vertices}\}}$.

The six-vertex model has an integer-valued height function representation. The relation between the six-vertex configuration and the height function is that the arrows of the former represent the gradient of the latter via the following convention: crossing an arrow from its left to its right increases the height by 1 (see \cref{fig:ice}). In fact, this mapping defines a bijection between six-vertex configurations and height functions modulo a global addition of an integer. We fix an additional convention that height functions take even values on the even sublattice, so that a six-vertex configuration determines the height function up to an addition of an even integer.

Recasting the model in terms of the height function representation, roughly speaking, a height function $h$ is randomly chosen with probability proportional to $c^{\#\text{saddle}(h)}$, where a \emph{saddle point} is a vertex of $\Z^2$ for which both diagonals have constant height. Indeed, saddle points of~$h$ correspond to vertices of type 5 and 6 in the six-vertex configuration (see \cref{fig:ice}).
We note that, unlike the Potts model, the six-vertex model has hard constraints and even admits \emph{frozen configurations} where no finite portion of the configuration can be modified in such a way that it still satisfies the ice rule (consider, for example, the arrow configuration in which every vertex has type~1, or equivalently, the height function given by $h(x ,y)=x-y$). In particular, the six-vertex model always has multiple (frozen) Gibbs states. Our results concern certain (non-frozen) Gibbs states, which we now define.

Let us proceed to give precise definitions.
We write $(\Z^2)^*$ for the dual lattice of $\Z^2$.
We write $\L$ and $\L^*$ for the even and odd sublattices of $(\Z^2)^*$, respectively, noting that each is a rotated and scaled copy of the integer lattice and that they are duals of each other. More precisely $\L := \sqrt{2}e^{i \pi/4} (\Z^2+(1/2,1/2))$ and $\L^*$ is its dual. A \emph{height function} is a function $h \colon (\Z^2)^* \to \Z$ such that $|h(u)-h(v)|=1$ for adjacent $u,v \in (\Z^2)^*$ and such that $h(u)$ is even for $u \in \L$ (and hence odd for $u \in \L^*$). We sometimes call $\L$ the even or primal sublattice, and $\L^*$ the odd or dual sublattice, depending on the context.

A \emph{diamond domain} is a set of the form $\Lambda = \{ u \in (\Z^2)^* : \dist(u,v) \le n \}$ for some $v \in \L$ and positive even integer $n$, where dist is the graph distance in $(\Z^2)^*$. The inner and outer vertex boundaries of such a diamond domain are $\{ u : \dist(u,v)=n \} \subset \L$ and $\{ u : \dist(u,v)=n+1 \} \subset \L^*$, respectively.
Though one could work with more general domains (so-called even domains), we stick to diamond domains for the sake of concreteness and clarity.
Given such a diamond domain and an even (resp. odd) integer~$m$, let $HF_\Lambda^m$ denote the set of height functions $h$ which equal $m$ (resp. $m+1$) on all the inner boundary of $\Lambda$ and $m+1$ (resp. $m$) on all the outer boundary of $\Lambda$, and which continue this pattern everywhere outside of $\Lambda$\footnote{This is just a convention; any other arbitrary but fixed assignment of heights would do just as well.}. We call this the \emph{$m$ boundary condition}. See \cref{fig:height}. Note that this definition ensures that (both inner and outer) boundary vertices take values in $\{m,m+1\}$, with the precise value determined according to the sublattice (even on $\L$, odd on $\L^*$). Thus, our boundary conditions are determined by an unordered pair of consecutive integers, and we chose to index these according to the smaller of the two integers.
Define a probability measure $\P^{\textsf{hf},m}_{\Lambda,c}$ on height functions by
\begin{equation}
\P^{\textsf{hf},m}_{\Lambda,c}(h) = \frac1{Z^{\textsf{hf},m}_{\Lambda,c}} \cdot c^{\# \text{saddle}(h)} \cdot \1_{HF_\Lambda^m}(h), \label{eq:hf}
\end{equation}
where $\# \text{saddle}(h)$ counts the number of saddle points of $h$ incident to a vertex in $\Lambda$ and $Z^{\textsf{hf},m}_{\Lambda,c}$ is the partition function.

\begin{figure}
\centering
\includegraphics[scale = 0.7]{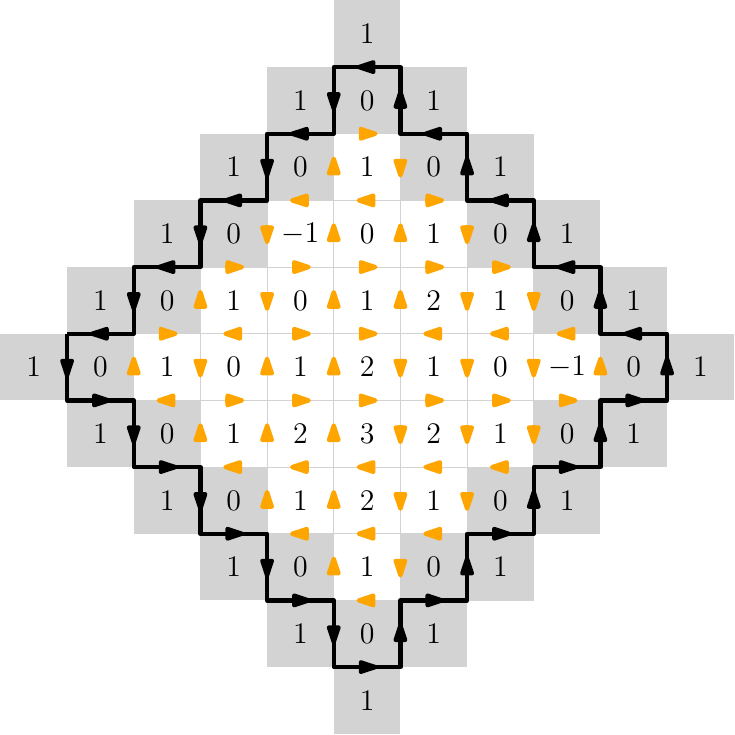}
\caption{A height function and its corresponding six-vertex configuration on a diamond domain $\Lambda$ with 0 boundary condition (meaning that the height is fixed to be 0 and 1 on the internal and external vertex boundaries). The black circuit is $\partial^\dagger \Lambda$.}
\label{fig:height}
\end{figure}

The \emph{gradient of a height function} $h$ lives on the oriented edges of $(\Z^2)^*$ and is defined by
\begin{equation}
 (\nabla h)_{(u,v)} = h(u) - h(v)  \label{eq:grad_h}
\end{equation}
for an oriented edge $(u,v) \in \vec E((\Z^2)^*)$.
We note that $\nabla h$ can be thought of as a six-vertex configuration, and that this correspondence between gradients of height functions and six-vertex configurations is a bijection.
We also define the \emph{diagonal gradient} to be the function on the oriented edges of $\L$ and $\L^*$ defined by
\begin{equation}
(\nabla_d h)_{(u,v)} = h(u) - h(v)  \label{eq:grad_d_h}
\end{equation}
 for an oriented edge $(u,v) \in \vec E(\L) \cup \vec E(\L^*)$. We note that $\nabla h$ and $\nabla_d h$ are defined by the same formula, but on different domains. We write $|\nabla_d h|$ for the pointwise absolute value of $\nabla_d h$ and note that $|\nabla_d h|_{(u,v)} = 2 \cdot \1_{\{h(u) \neq h(v) \}}$ so that $|\nabla_d h|$ may be thought of as a function on the non-directed edges $E(\L) \cup E(\L^*)$. Note that the pointwise absolute value of $\nabla h$ is not an interesting object as it is always the constant 1 function.
Let us consider yet another object of interest. Define the \emph{Laplacian} of $h$ (or the ``curl" of the six-vertex configuration) to be the function from $(\Z^2)^*$ to $\{0,\pm \frac12, \pm 1\}$ given by
\begin{equation}
(\Delta h)_u = \frac14 \sum_{v \sim u} (h(v)-h(u)), \label{eq:eq:lap_h}
\end{equation}
where the sum is over the four neighbors of $u$ in $(\Z^2)^*$.
We also write $|\Delta h|$ for the pointwise absolute value of $\Delta h$.

We note that measures on height functions cannot be \ffiid\ for the trivial reason that height functions are always even on $\L$ and odd on $\L^*$. It is therefore natural to ask instead whether they are $(\Z^2)_\text{even}$-\ffiid, where $(\Z^2)_\text{even}$ is the group of translations which preserve the two sublattices. On the other hand, the gradient and Laplacian do not suffer from this problem, and could potentially have a coding which commutes with all automorphisms.

\begin{thm}\label{thm:h_grad}
Let $p_c$ denote the critical probability for Bernoulli site percolation on $\Z^2$, and fix $c>\frac{2+p_c}{1-p_c}$ and $m \in \Z$. Then $\P^{\textsf{hf},m}_{\Lambda,c}$ converges to an infinite-volume limit $\P^{\textsf{hf},m}_c$ as $\Lambda$ increases to $(\Z^2)^*$ along diamond domains. Moreover, if $h$ is sampled from $\P^{\textsf{hf},m}_c$, then
\begin{enumerate}
 \item The random fields $h$, $\nabla h$, $\nabla_d h$ and $\Delta h$ are not $(\Z^2)_{\text{even}}$-\ffiid.
 \item The random fields $|\nabla_d h|$ and $|\Delta h|$ are \ffiid.
\end{enumerate}
\end{thm}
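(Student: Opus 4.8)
The overall strategy is to realize the six-vertex height function through the promised graphical representation --- an Edwards--Sokal-type coupling between the six-vertex model with $c \ge 2$ and a Bernoulli-type percolation process on the diagonal edges of $(\Z^2)^*$ (equivalently on $\L$ and $\L^*$). Concretely, I expect the coupling to work as follows: one samples a percolation configuration $\omega$ on the diagonal edges in which each open edge is "flat" (its two endpoints carry equal height) and each closed edge is "non-flat" (heights differ by $2$); the clusters of $\omega$ then get assigned i.i.d.\ signs/levels in a manner consistent with the boundary condition, exactly as the random-cluster clusters get i.i.d.\ colours in the Potts case. The key structural input is that $c$ large forces the flat percolation $\omega$ to be \emph{subcritical} on each of $\L,\L^*$, with the threshold being precisely $c > \tfrac{2+p_c}{1-p_c}$, since the marginal open-probability should come out to something like $\tfrac{c-1}{c+1}$ or a comparable expression whose complement is dominated by $p_c$. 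Once $\omega$ is subcritical, all its clusters are finite a.s., and the infinite-volume limit $\P^{\textsf{hf},m}_c$ exists by the usual monotonicity/FKG arguments transported through the coupling (or directly via a Peierls-type contour argument on the circuits $\partial^\dagger\Lambda$).

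For part (2): the fields $|\nabla_d h|$ and $|\Delta h|$ are \emph{exactly} measurable functions of the percolation configuration $\omega$ alone --- indeed $|\nabla_d h|_{(u,v)} = 2\cdot\mathbf 1_{\{(u,v)\text{ closed}\}}$ by construction, and $|\Delta h|_u$ is determined by which of the four diagonal edges at $u$ are open (the local pattern of flat vs.\ non-flat edges), since flipping the global sign of a cluster does not change $|\Delta h|$. So it suffices to produce a finitary factor of i.i.d.\ for $\omega$ itself. Here I would invoke the machinery this paper develops for the random-cluster/Potts gradient (Theorem~\ref{thm:main_potts} and its proof): a subcritical (or more precisely, finite-cluster) percolation process that is monotone and has good decay of connection probabilities is \ffiid\ by revealing a tree structure on its clusters via exploration. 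The point is that $\omega$, being a Bernoulli-like model conditioned by $c$-weights on the cluster structure, should be dominated by and should dominate genuine subcritical Bernoulli percolation; its finite clusters can be sampled cluster-by-cluster, each cluster using a bounded (random, exponentially-tailed) window. Then $|\nabla_d h|$ and $|\Delta h|$, being bounded-range functions of $\omega$, inherit finitariness.

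For part (1): the obstruction is the usual global one. Conditioned on $\omega$, the height function $h$ is obtained by choosing, for each finite $\omega$-cluster, an i.i.d.\ offset from a two-point set (shift by $\pm 2$ relative to a reference), with the "infinite component" --- the boundary-connected region --- pinned by $m$. But whether a given cluster is pinned to the boundary or free is a \emph{global} event (it depends on the entire configuration, since subcriticality means no infinite cluster and the "boundary component" only makes sense in finite volume / as a tail event). Translating the standard argument: if $h$ (or $\nabla h$, $\nabla_d h$, $\Delta h$ --- each of which still sees the sign of at least one cluster) were $(\Z^2)_{\text{even}}$-\ffiid\ with coding radius $R$, then the value at $\zero$ would be determined by a finite window; but the conditional law of $h$ on that window, given its complement, still has nontrivial dependence on the macroscopic choice of level for the cluster containing $\zero$ --- a standard argument shows this forces $R = \infty$ with positive probability, contradicting finitariness. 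I would phrase this as: the tail $\sigma$-algebra of $h$ is nontrivial (there are at least two distinct infinite-volume Gibbs states, the "even-pinned" and "odd-pinned" ones, exchanged by a global height shift), and a nontrivial tail field can never be \ffiid --- this is the same mechanism as in the Ising $\beta>\beta_c$ and Potts $\beta>\beta_c$ non-\ffiid\ results, and I would cite \cite{van1999existence} for the template.

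\textbf{Main obstacle.} The genuinely hard part is establishing the graphical representation and its subcriticality threshold --- i.e., proving that the coupling with a flat/non-flat diagonal percolation actually reproduces $\P^{\textsf{hf},m}_{\Lambda,c}$, and that the resulting process on $\L$ (and on $\L^*$) is stochastically dominated by subcritical Bernoulli site percolation exactly when $c > \tfrac{2+p_c}{1-p_c}$. Everything else --- the infinite-volume limit, the finitary coding of a subcritical monotone percolation via cluster exploration, and the tail-triviality obstruction for part (1) --- follows familiar paths (the first two reusing the techniques behind Theorem~\ref{thm:main_potts}, the last reusing the van den Berg--Steif mechanism). The delicate points within the main obstacle are: identifying the correct weights on open/closed diagonal edges so that summing over cluster sign-assignments recovers $c^{\#\mathrm{saddle}(h)}$ (saddle points should correspond precisely to vertices all four of whose incident diagonal edges in one sublattice are flat, or some such local characterization --- this must be checked against Figure~\ref{fig:ice}); handling the two sublattices $\L$ and $\L^*$ simultaneously, since a height function couples them; and verifying the domination constant carefully, as the stated threshold $\tfrac{2+p_c}{1-p_c}$ is sharp-looking and presumably comes from a precise computation of the conditional edge-marginal given the rest of $\omega$.
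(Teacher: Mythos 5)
Your structural picture of the graphical representation is in the wrong phase, and this breaks both halves of the argument. For $c>\frac{2+p_c}{1-p_c}$ the relevant percolation is \emph{supercritical}, not subcritical: large $c$ favours saddle points, i.e.\ flat diagonals, and in the paper's superimposed representation (two $q=2$ random-cluster configurations on $\L$ and $\L^*$ conditioned to have no closed cross) the conditional probability that a cross is open is at least $\frac{\alpha}{3+\alpha}$ with $\alpha=c-2$, which exceeds $p_c$ exactly under the stated hypothesis; consequently each of $\eta^0,\eta^1$ contains a unique \emph{infinite} cluster (\cref{thm:uniqueness}). Your assumption that all clusters are finite is not only false, it is inconsistent with part (1): if the height function were obtained by assigning \iid\ offsets to finite clusters of an \ffiid\ percolation, then $h$ itself would be $(\Z^2)_{\text{even}}$-\ffiid, contradicting the statement you are trying to prove. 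The whole difficulty in part (2) is the presence of the infinite clusters, whose pinned spins cannot be determined finitarily; this is what the cluster-tree/lowest-common-ancestor machinery of \cref{prop:tree} and \cref{thm:grad_general} is for, and the \ffiid\ property of the underlying superimposed configuration comes from monotonicity (with the order reversed on one sublattice, \cref{prop:FKG}) plus uniqueness of the Gibbs measure via the monotone-coupling theorem of~\cite{harel2018finitary}, not from domination by subcritical Bernoulli percolation. Relatedly, your claim that $|\nabla_d h|$ is a function of the percolation alone fails for the paper's representation (open edges force equal spins, but closed edges do not force unequal spins, so the gradient also depends on the \iid\ signs through the cluster tree); if instead you literally take the ``flat-edge'' percolation, then it is a deterministic function of $h$ and showing it is \ffiid\ \emph{is} the problem, and it is neither Bernoulli-like nor subcritical.

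Part (1) as you sketch it also does not go through: $\P^{\textsf{hf},m}_c$ is an extremal limit, so its tail $\sigma$-algebra is trivial, and ``nontrivial tail field'' is not the mechanism (it is not the mechanism in van den Berg--Steif either). The paper's argument is quantitative: by \cref{cor:spin-correlations}, $\E(\Delta h_v)>0$ under the even pinning and $<0$ under the odd pinning; if both fields were $(\Z^2)_{\text{even}}$-\ffiid, the exponential-rate ergodic theorem would force the block averages $Z_n$ over $\Lambda_n$ to have the correct sign up to an error $e^{-bn^2}$, whereas the finite-energy coupling of \cref{lem:spin-agree} makes the two spin configurations agree on $\Lambda_n$ with probability at least $e^{-an}$, a contradiction since $e^{-an}\gg e^{-bn^2}$. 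Some genuinely new input of this quantitative type (or an equally specific replacement) is needed; the qualitative ``global choice of level cannot be coded'' heuristic, as written, does not yield a proof.
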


The theorem shows that while the Laplacian and diagonal gradients of the height function are not $(\Z^2)_{\text{even}}$-\ffiid, their absolute values are.
Let us already mention here that $\Delta h$ is a simple local function of $\nabla h$, which in turn is a simple (non-local, but finitary) function of $\nabla_d h$. Thus, the first part of \cref{thm:h_grad} boils down to showing that $\Delta h$ is not $(\Z^2)_{\text{even}}$-\ffiid. Similarly, $|\Delta h|$ is a simple local function of $|\nabla_d h|$ so that the second part of \cref{thm:h_grad} boils down to showing that $|\nabla_d h|$ is \ffiid.

We mention that Glazman and Peled~\cite{glazmanpeled2019} showed that $\P^{\textsf{hf},m}_{\Lambda,c}$ converges for all $c>2$. We give a different and self-contained proof of this for $c>\frac{2+p_c}{1-p_c}$.
For $c=2$, these measures do not converge (the height function has logarithmic variance), though the gradient measures do (that is, the six-vertex measures converge) and the limiting measure does not depend on $m$~\cite{glazmanpeled2019}. In this case, it can be shown that this measure is \ffiid\ (see \cref{rem:c=2}).

\medbreak

Let us now outline some of the ideas and ingredients that go into the proof \cref{thm:h_grad}.
The part of the result concerning the non-existence of a finitary coding follows a similar argument as the one in~\cite{spinka2018finitarymrf} for general Markov random fields (though the argument here is slightly complicated by the existence of hard constraints) and we do not expand on it here.
Let us explain the second part of the result, namely, that the absolute value of the diagonal gradient is \ffiid.

A significant difference between the six-vertex model height function and the Potts (or beach) model is that the state space is not compact. A first step toward incorporating this model into the framework of \cref{sec:general} is to find a spin representation. It turns out this can essentially be done by taking the values modulo $4$ of the heights and using the bipartiteness of the square lattice to reduce it to a spin model with two spin values. This operation preserves the absolute value of the diagonal gradient of the height function (it is zero precisely when the spins are equal), so that the goal becomes to show that the diagonal gradient of the spins is \ffiid.
The spin representation is defined in \cref{sec:spin}.

The next key ingredient is a new percolation model which we call the \emph{superimposed random-cluster model} (or just the \emph{superimposed model} for short). As its name suggests, the superimposed random-cluster model consists of two random-cluster models, one on the primal lattice $\L$ and one on the dual lattice $\L^*$, superimposed on top of each other. Roughly speaking, the two random-cluster models are sampled independently of each other and then conditioned to have no \emph{closed crosses}. Precise definitions are given in \cref{sec:si}.

The superimposed model serves as a graphical representation of the spin representation of the six-vertex model with $c \ge 2$, much like the usual random-cluster model serves as a graphical representation of the Ising and Potts models (e.g., the spin representation of the six-vertex model can be coupled with the superimposed model in a manner reminiscent of the usual Edwards--Sokal coupling, where an open edge forces its two endpoints to have equal spins); see \cref{fig:si}.
We therefore believe that this model is of independent interest.

\begin{figure}
\centering
\includegraphics[scale = 0.6]{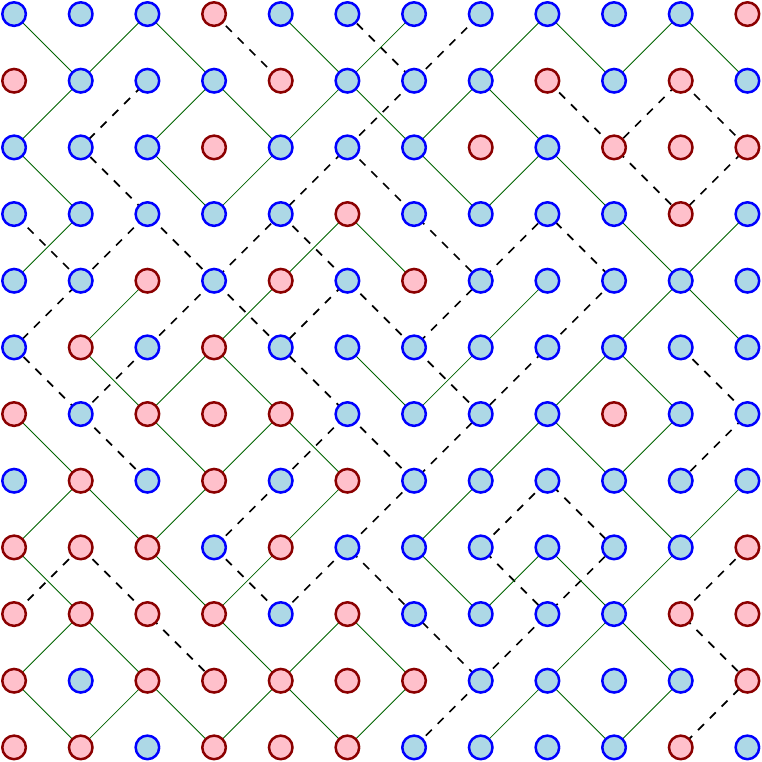}
\caption{A superimposed configuration $\eta$ with a compatible spin configuration $\sigma$. The blue and red circles correspond to $-$ and $+$ spins in $\sigma$, respectively. Sites which are connected by edges of $\eta$ must have the same spin value in $\sigma$. Unlike in the usual random-cluster model, an edge and its dual can both be present in $\eta$.}\label{fig:si}
\end{figure}

To emphasize this last point, we mention here that graphical representations of lattice spin models have gained immense popularity in recent times. Besides the random-cluster model, examples include a whole range of very popular models such as the random current model, the high and low temperature expansions of the Ising model, cluster expansions, random walk representations, and the loop $O(n)$ model. We refer to \cite{duminil_graphical,peled2017lectures} for excellent surveys on this subject. Such representations translate information about correlations in the spin model into connectivity properties of a percolation-type model arising from the graphical representation and have been used as a central tool in settling various open problems \cite{aizenman2015random,duminil2017continuity,aizenman2018emergent,duminil2018exponential}. We mention that the (critical) random-cluster model is itself related to the six-vertex model via the Baxter--Kelland--Wu coupling (see \cite{BKW76}), but that this is not analogous to the relation between the Ising/Potts model and the random-cluster model (see \cref{rem:bkw}).
Motivated to find such an analogue, we discovered the aforementioned superimposed model.

In \cref{sec:si-spin-coupling} we establish the above Edwards--Sokal-like coupling in finite domains, and then continue to establish some other crucial properties of the superimposed model in the following subsections. Specifically, in \cref{sec:monotone} we show that the model satisfies a monotonicity (FKG) property (albeit with a partial order which is reversed on one sublattice). We then show in \cref{sec:uniqueness} that, in a certain regime of its parameters, the model has a unique Gibbs state and that samples from this unique Gibbs state have a unique infinite cluster in each sublattice. In \cref{sec:inf_coupling} we extend the above coupling to the infinite volume.
Once we are equipped with these properties of the superimposed model, the proof of \cref{thm:h_grad} is similar to that of \cref{thm:main_potts}.
Indeed, the general result in~\cite{harel2018finitary} (which relies on the monotonicity and uniqueness of the Gibbs measure) will imply that the superimposed model is \ffiid, and then the general result shown in \cref{sec:general} (which relies on the uniqueness of the infinite cluster) will imply that the diagonal gradient of the spin representation (and hence also $|\nabla_d h|$) is \ffiid. This details of this and related statements are given in \cref{sec:h_grad}.

We end with some notation which will be used throughout the section.
Let $\Lambda$ be a diamond domain and recall that this is a subset of $(\Z^2)^*$. Let $\partial^\dagger \Lambda$ be the simple circuit in $\Z^2$ which lies between the inner and the outer boundary vertices of~$\Lambda$ (see \cref{fig:height}).
Let $\hat \Lambda$ denote the subgraph of $\Z^2$ induced by the vertices of $\partial ^\dagger \Lambda$ and all the vertices of $\Z^2$ enclosed by it. \emph{Internal vertices} of $\hat \Lambda$ are those vertices of $\hat \Lambda$ which have all four incident edges belonging to $\hat \Lambda$ (these include all vertices enclosed by $\partial ^\dagger \Lambda$, and also some vertices of $\partial ^\dagger \Lambda$).

\subsection{The spin representation}\label{sec:spin}

The spin representation of the six-vertex model is a spin model on $(\Z^2)^*$.
A configuration in this model is an element $\sigma \in \{-,+\}^{(\Z^2)^*}$ which satisfies the \emph{ice rule}:
in any $2\times2$ square, at least one of the two diagonals consists of equal spins.
Every height function $h$ projects onto such a spin configuration $\sigma$ given by
\begin{equation}\label{eq:height-to-spin}
\sigma_v = 
\begin{cases}
+ &\text{if }h(v) = 0 ,1 \mod 4\\
- &\text{if }h(v) = 2 ,3 \mod 4
\end{cases}.
\end{equation}
In the other direction, every spin configuration lifts to countably many height functions which differ from one another by a global additional of an integer in $4\Z$ (recall that by definition, we force the height function to be even on $\L$).
In particular, any six-vertex configuration lifts to precisely two spin configurations, which are global flips of each other (i.e., one is $\sigma$ and the other is $-\sigma$). See \cref{fig:spin}.

\begin{figure}[t]
\centering
\includegraphics[scale = 1.4]{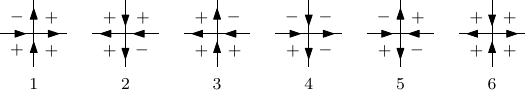}
\caption{The spin representation of the six-vertex model.  The spin is $+$ if the height is 0 or 1 modulo 4, and $-$ otherwise. The height is assumed in the figure to be 0 on the bottom-left square. If instead it were assumed to be 2, then the spins would be globally flipped. If it were 1 or 3, the same rule would apply and would result in a similar figure.} \label{fig:spin}
\end{figure}

Recall the definitions of the gradient of a height function \eqref{eq:grad_h} and its Laplacian \eqref{eq:eq:lap_h}. We make the following straightforward observations that if a height function $h$ projects to the spin configuration $\sigma$, then, for $v \in \L$,
\begin{align}
(\nabla h)_{(u,v)} &= \1_{\{\sigma_u=\sigma_v\}} - \1_{\{\sigma_u \neq \sigma_v\}}, \label{eq:spin-height-nabla}\\
(\Delta h)_v &= \frac14 \sum_{u \sim v} (\1_{\{\sigma_u=\sigma_v\}} - \1_{\{\sigma_u \neq \sigma_v\}}) , \label{eq:spin-height-delta}\\
|(\Delta h)_v| &= \frac14 \Big| \#\{ u \sim v : \sigma_u = +\} - \#\{ u \sim v : \sigma_u = -\} \Big| . \label{eq:spin-height-abs-delta}
\end{align}

By pushing $\P^{\textsf{hf},m}_{\Lambda,c}$ forward via the projection from height functions to spin configurations, one obtains a corresponding measure on spin configurations. Note that, by~\eqref{eq:height-to-spin} and the convention regarding the height function boundary condition, this measure is supported on spin configurations whose inner boundary vertices have spin $+$ if and only if $m=0,3$ mod~4, and whose outer boundary vertices have spin $+$ if and only if $m=0,1$ mod~4. Moreover, any two values of $m$ which are congruent mod 4 induce the same measure on spin configurations, so that only four such measures arise in this manner. We denote these measures by $\{\P^{\textsf{spin},ij}_{\Lambda,c}\}_{i,j \in \{+,-\}}$, where $\P^{\textsf{spin},ij}_{\Lambda,c}$ corresponds to the case where the outer boundary has spin $i$ and the inner boundary has spin $j$. It is easy to check that these measures are given explicitly by
\begin{equation}
\P^{\textsf{spin},ij}_{\Lambda,c}(\sigma) = \frac1{Z^{\textsf{spin},ij}_{\Lambda,c}} c^{\#\text{saddle}_\Lambda(\sigma)}\1_{\Omega^{\mathsf{spin},ij}_\Lambda}(\sigma).\label{eq:spin_boundary}
\end{equation}
Here $\Omega^{\mathsf{spin},ij}_{\Lambda}$ is the space of all spin configurations $\sigma \in \{+,-\}^{(\Z^2)^*}$ satisfying the ice rule, having spin $i$ on outer boundary vertices, spin $j$ on inner boundary vertices and continuing this pattern everywhere outside of $\Lambda$ (i.e., $i$ on even, $j$ on odd), and saddle$_\Lambda(\sigma)$ is the set of internal vertices of $\hat\Lambda$ which have type 5 or 6 (see \cref{fig:spin}), and $Z^{\textsf{spin},ij}_{\Lambda,c}$ is the appropriate partition function. We refer to type 5 or 6 vertices of $\Z^2$ as \emph{saddle points} from now on.

\subsection{The superimposed model} \label{sec:si}

In this section, we define the superimposed model. Though for our application we will use this model with parameter $q=2$, we introduce the model with general $q>0$ as it may be of independent interest.

The superimposed model consists of two random-cluster configurations, one on the primal lattice $\L$ (which is the rotated and scaled copy of $\Z^2$ formed by the even vertices of $(\Z^2)^*$) and one on its dual lattice $\L^*$. We think of $\L$ and $\L^*$ as the graphs (isomorphic to the square lattice) induced by their vertices, with $E(\L)$ and $E(\L^*)$ denoting their edge sets.
A \emph{cross} is a pair $\{e,e^*\}$ of primal/dual edges, where $e \in \L$ and $e^* \in \L^*$ is its dual edge.
Configurations of the superimposed model are pairs $\eta=(\eta^0,\eta^1) \in \{0,1\}^{E(\mathbb L)} \times \{0,1\}^{E(\mathbb L^*)}$ which contain no closed crosses, where a cross $\{e,e^*\}$ is said to be \emph{closed} if $\eta^0_e=\eta^1_{e^*}=0$, and \emph{open} if $\eta^0_e=\eta^1_{e^*}=1$.
We may also regard $\eta$ as an element of $\{0,1\}^{E(\L) \cup E(\L^*)}$.
Note that the set of crosses may be identified with $\Z^2$ (the intersection point of $e$ and $e^*$ lies on a vertex of the $\Z^2$ lattice). Thus, we may also identify $\eta$ with a configuration $\hat\eta \in \{(1,0),(1,1),(0,1)\}^{\Z^2}$ (i.e. a three-state site percolation on $\Z^2$), where the three values correspond to the possible values of $(\eta^0,\eta^1)$ on any cross.

Let $\Omega^{\mathsf{SI}}$ be the set of superimposed configurations.
The superimposed measure with parameters $\alpha>0$ and $q>0$ and boundary condition $\tau \in \Omega^{\mathsf{SI}}$ on a finite set $\Delta \subset E(\L) \cup E(\L^*)$ is given by
\begin{equation}
\P_{\Delta, \alpha,q}^{\mathsf{SI},\tau} (\eta) = \frac1 { Z_{\Delta, \alpha,q}^{\mathsf{SI},\tau}}\alpha^{N_\Delta(\eta)} q^{k_\Delta(\eta)} \1_{\Omega^{\mathsf{SI}, \tau}_\Delta}(\eta), \label{eq:superimposed}
\end{equation}
where $N_\Delta(\eta)$ is the number of open crosses of $\eta$ intersecting $\Delta$ (i.e., open crosses $\{e,e^*\}$ such that $e \in \Delta$ or $e^* \in \Delta$ or both), $k_\Delta(\eta)$ is the sum $k_\Delta(\eta^0)+k_\Delta(\eta^1)$ of the number of open vertex-clusters in $\eta^0$ and $\eta^1$ that contain a vertex incident to $\Delta$, and $\Omega^{\mathsf{SI},\tau}_\Delta$ is the set of configurations in $\Omega^{\mathsf{SI}}$ which agree with $\tau$ outside $\Delta$.

Of particular interest will be the boundary condition $\tau$ in which all edges are open. We call this boundary condition the \emph{wired-wired} boundary condition (since we are wiring both primal and dual edges) and denote the corresponding measure by $\P^{\mathsf{SI},11}_{\Delta, \alpha,q}$.
Similarly, the \emph{wired-free} (resp.\ \emph{free-wired}) boundary condition is the configuration $\tau$ in which all primal edges are open (resp.\ closed) and all dual edges are closed (resp.\ open), and the corresponding measure is denoted by $\P^{\mathsf{SI},10}_{\Delta,\alpha,q}$ (resp.\ $\P^{\mathsf{SI},01}_{\Delta,\alpha,q}$).

Recall the definition of a diamond domain from \cref{sec:6v} (see \cref{fig:height}).
Given a diamond domain $\Lambda \subset (\Z^2)^*$, we define $\Delta=\Delta(\Lambda)$ to be the union of the set of edges of $\L$ having both endpoints in $\Lambda$, and their dual edges (note that these are precisely the edges of $\L^*$ having at least one endpoint in $\Lambda$).

\subsection{A graphical representation}
\label{sec:si-spin-coupling}
In this section, we define an Edwards--Sokal-like coupling between the spin representation of the six-vertex model and the superimposed model with $q=2$. We construct this coupling in finite domains here (more precisely, diamond domains for clarity) and later extend it to infinite volume in \cref{sec:inf_coupling}.

Recall the notation from \cref{sec:si}.
Let $\Lambda$ be a diamond domain and set $\Delta = \Delta(\Lambda)$.
Let $\sigma \in \Omega^{\mathsf{spin},ij}_{\Lambda}$ and $\eta \in \Omega^{\mathsf{SI},11}_\Delta$.
 We say that $\sigma$ and $\eta$ are \emph{compatible}, denoted by $\sigma \sim \eta$, if
\[ \eta_e=1 ~\implies~ \sigma_u = \sigma_v \qquad\text{for every } e=\{u,v\} \in E(\L) \cup E(\L^*) .\]
Define a probability measure on $\Omega^{\mathsf{spin},ij}_{\Lambda} \times \Omega^{\mathsf{SI},11}_\Delta$ by
\begin{equation}
 \bP^{ij}_{\Lambda, \alpha}(\sigma, \eta) = \frac1{ \bZ^{ij}_{\Lambda, \alpha}}  \alpha^{N_\Delta(\eta)} \1_{\{\sigma \sim \eta\}} , \label{eq:ES_coupling}
\end{equation}
where $ \bZ^{ij}_{\Lambda, \alpha}$ is the appropriate partition function.

\begin{figure}[t]
\centering
\includegraphics[scale = 0.54]{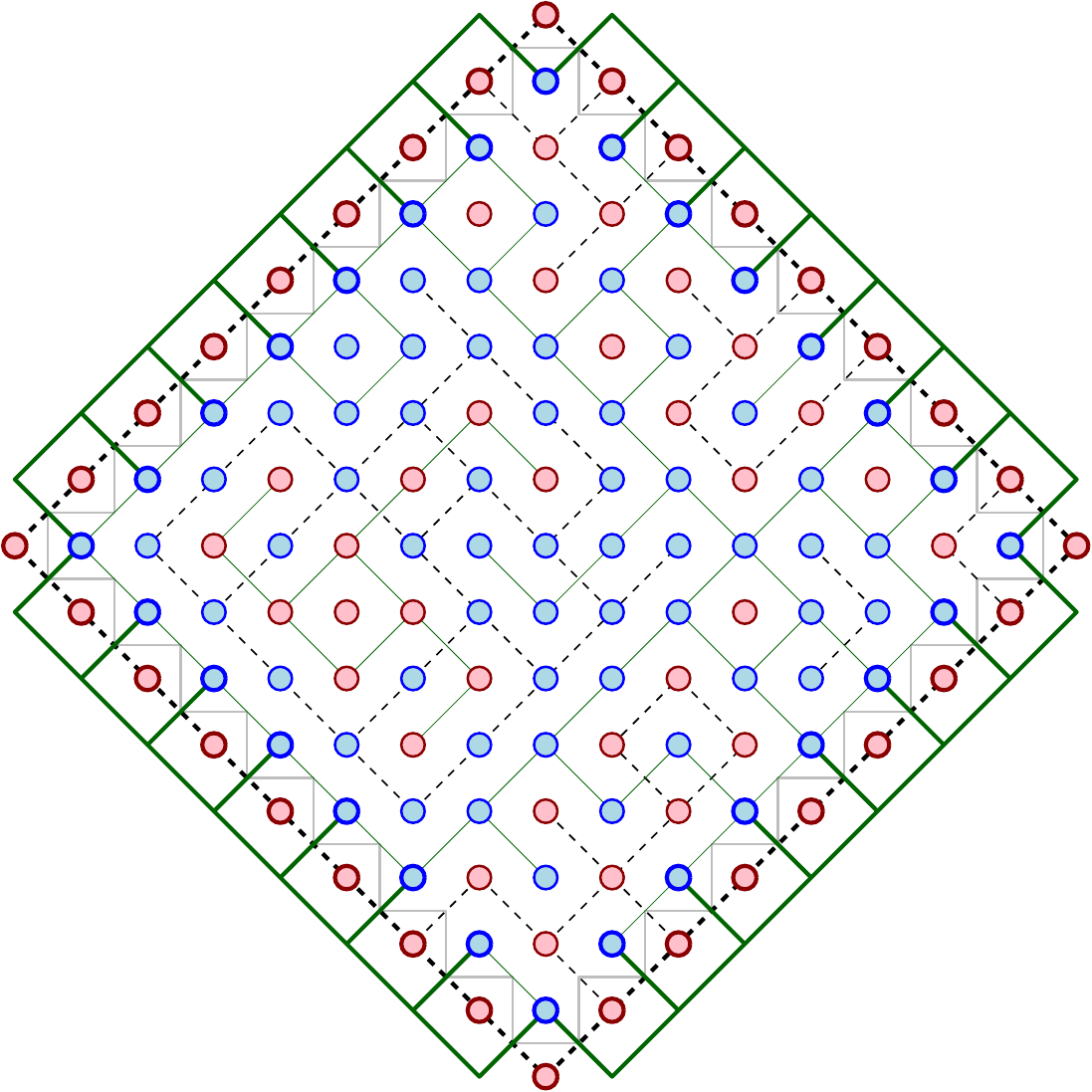}
\caption{A superimposed configuration with wired-wired boundary conditions and a compatible spin configuration with $+-$ boundary conditions ($+$ in red, $-$ in blue) on a diamond domain.}\label{fig:si-coupling}
\end{figure}

\begin{prop}\label{P:ES_coupling}
Let $c>2$ and set $\alpha=c-2$. Let $i,j \in \{+,-\}$ and let $\Lambda$ be a diamond domain and $\Delta = \Delta(\Lambda)$. Then $\bP^{ij}_{\Lambda,\alpha}$ defines a coupling between $\P^{\textsf{spin},ij}_{\Lambda,c}$ and $\P^{\mathsf{SI},11}_{\Delta,\alpha,2}$ . Moreover, if $(\sigma,\eta)$ is sampled from this coupling, then
\begin{itemize}
 \item Given $\sigma,$ $\eta$ can be sampled by first putting in the unique compatible edge at each non-saddle point, and then, independently for each saddle point, assigning one of the three values $(1,0),(1,1),(0,1)$ for $\hat\eta$ with probabilities $\frac1{2+\alpha}, \frac\alpha{2+\alpha},\frac1{2+\alpha}$, respectively.
 \item Given $\eta$, $\sigma$ can be sampled by independently choosing a uniform sign for each non-boundary cluster. The two boundary clusters receive the spin prescribed by the boundary condition (i.e. $i$ for the dual boundary cluster and $j$ for the primal boundary cluster).
\end{itemize}
\end{prop}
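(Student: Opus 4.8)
## Proof Proposal for Proposition \ref{P:ES_coupling}

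The plan is to verify directly that the marginals of $\bP^{ij}_{\Lambda,\alpha}$ are the two claimed measures, and that the two conditional descriptions are correct; the four claims are essentially equivalent, so it suffices to prove two of them carefully and deduce the rest. First I would fix the spin marginal. Summing \eqref{eq:ES_coupling} over all $\eta \in \Omega^{\mathsf{SI},11}_\Delta$ with $\sigma \sim \eta$, the weight of $\sigma$ is proportional to $\sum_{\eta : \sigma \sim \eta} \alpha^{N_\Delta(\eta)}$. The key combinatorial observation is that compatibility is a \emph{local} constraint: for each cross $\{e,e^*\}$ (identified with a vertex of $\Z^2$), the allowed values of $\hat\eta$ at that cross depend only on whether $\sigma$ agrees along $e$ and along $e^*$. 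At a non-saddle point, exactly one of the two diagonals has unequal spins, so $\eta$ is forced to be $(1,0)$ or $(0,1)$ there — a single choice with weight $\alpha^0 = 1$ (since a forced open edge on the agreeing diagonal contributes, but one must be careful: a cross is "open" only if \emph{both} edges are present, so a forced non-saddle cross is never open and contributes weight $1$). At a saddle point, both diagonals have equal spins, so all three values $(1,0),(1,1),(0,1)$ are compatible, contributing total weight $1 + \alpha + 1 = 2+\alpha$ (the $(1,1)$ choice being the unique open cross, with weight $\alpha$). Crucially, these choices are independent across crosses \emph{given} $\sigma$, because the superimposed constraint "no closed cross" is automatically satisfied by every compatible $\eta$ (a closed cross would force disagreement on a diagonal, contradicting nothing, but actually a closed cross imposes no spin constraint — here one must check that the only global constraint on $\eta$, namely membership in $\Omega^{\mathsf{SI}}$, is implied by compatibility at saddle points and forced at non-saddle points; a closed cross is only possible if both $\eta_e = \eta_{e^*} = 0$, which at a non-saddle point is disallowed since the agreeing edge is forced open, and at a saddle point is simply one of the excluded configurations). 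Hence the sum factorizes over crosses and $\sum_{\eta : \sigma \sim \eta} \alpha^{N_\Delta(\eta)} = (2+\alpha)^{\#\mathrm{saddle}_\Lambda(\sigma)} = c^{\#\mathrm{saddle}_\Lambda(\sigma)}$ using $\alpha = c-2$, which matches \eqref{eq:spin_boundary} up to normalization. This simultaneously proves the spin marginal and the first conditional description: given $\sigma$, the conditional law of $\eta$ is the product over saddle points of the measure assigning $(1,0),(1,1),(0,1)$ weights $\tfrac1{2+\alpha}, \tfrac\alpha{2+\alpha}, \tfrac1{2+\alpha}$.

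Next I would identify the $\eta$-marginal. Summing \eqref{eq:ES_coupling} over $\sigma \in \Omega^{\mathsf{spin},ij}_\Lambda$ compatible with a fixed $\eta \in \Omega^{\mathsf{SI},11}_\Delta$, the weight of $\eta$ is $\alpha^{N_\Delta(\eta)}$ times the number of compatible $\sigma$. A spin configuration $\sigma$ is compatible with $\eta$ iff it is constant on each primal cluster and on each dual cluster of $\eta$; moreover any such locally-constant assignment automatically satisfies the ice rule for $\sigma$ — this needs a short argument: if a cross $\{e,e^*\}$ had unequal spins along \emph{both} diagonals, then neither $e$ nor $e^*$ is in $\eta$, i.e.\ the cross is closed, which is forbidden in $\Omega^{\mathsf{SI}}$; hence at least one diagonal of every cross has equal spins, which is exactly the ice rule. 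The boundary conditions pin the (unique, by wired-wired boundary) primal boundary cluster to spin $j$ and the dual boundary cluster to spin $i$, and the pattern outside $\Lambda$ is then determined. So the number of compatible $\sigma$ is exactly $2^{k_\Delta^{\mathrm{free}}(\eta)}$ where $k_\Delta^{\mathrm{free}}(\eta)$ is the number of \emph{non-boundary} clusters (primal and dual) of $\eta$ meeting $\Delta$. Since the boundary clusters are the same for every $\eta \in \Omega^{\mathsf{SI},11}_\Delta$, this differs from $2^{k_\Delta(\eta)}$ only by a constant factor, so the $\eta$-weight is proportional to $\alpha^{N_\Delta(\eta)} 2^{k_\Delta(\eta)}$, matching \eqref{eq:superimposed} with $q=2$. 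This also gives the second conditional description: given $\eta$, the compatible $\sigma$'s are in bijection with sign assignments to the non-boundary clusters, all equally weighted, hence uniform and independent across clusters.

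The main obstacle I anticipate is not any deep estimate but getting the bookkeeping exactly right at the interface between "local" (per-cross) and "global" ($\Omega^{\mathsf{SI}}$, ice rule, boundary) constraints — specifically: (i) confirming that the "no closed cross" constraint and the ice rule are each \emph{implied} by compatibility together with the forced/free structure at non-saddle/saddle points, so that the factorizations above are genuinely over \emph{all} of $\Omega^{\mathsf{SI},11}_\Delta$ and $\Omega^{\mathsf{spin},ij}_\Lambda$ with no hidden restrictions; (ii) correctly handling the edges and crosses near $\partial^\dagger\Lambda$ — which crosses are "in $\Delta$", which primal/dual edges are forced by the wired-wired boundary, and checking that a cross on the boundary which is forced open does not disturb the count (this is where the precise definition of $\Delta(\Lambda)$ and of internal vertices of $\hat\Lambda$ must be used); and (iii) verifying that the constant factors absorbed into the partition function ($c^0$ from non-saddle crosses, $2^{\#\mathrm{boundary\ clusters}}$) are truly configuration-independent. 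Once these are pinned down, the proposition follows by combining the two marginal computations with the two conditional descriptions, each of which has already been read off the factorization, and observing that the two conditionals are mutually consistent (a standard check that a coupling with prescribed marginals and one prescribed conditional is unique).
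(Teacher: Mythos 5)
Your proposal is correct and follows essentially the same route as the paper: compute the spin marginal by factorizing the sum over compatible $\eta$ cross-by-cross (forced value at non-saddle crosses, weight $2+\alpha$ at saddle crosses), compute the $\eta$-marginal by counting spin assignments as $2^{\#\text{non-boundary clusters}}$, and read off both conditional descriptions from these factorizations. The boundary bookkeeping you flag as an obstacle is handled in the paper exactly as you anticipate (non-internal vertices of $\hat\Lambda$ are forced open crosses, hence configuration-independent, and the two wired boundary clusters account for the factor $2^{k_\Delta(\eta)-2}$), so no further ideas are needed.
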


\begin{proof}
To compute the first marginal, we fix a spin configuration $\sigma \in \Omega^{\mathsf{spin},ij}_{\Lambda}$ and sum over $\eta$ to get
$$
 \sum_{\eta \in  \Omega^{\mathsf{SI},11}_\Delta}  \bP^{ij}_{\Lambda,\alpha}(\sigma,\eta)=
 \frac1{ \bZ^{ij}_{\Lambda, \alpha}} \sum_{\substack{\eta \in  \Omega^{\mathsf{SI},11}_\Delta \\\eta \sim \sigma}}
  \alpha^{N_\Delta(\eta)}
 =
  \frac1{ \bZ^{ij}_{\Lambda, \alpha}} (2+\alpha)^{\#\text{saddle}_\Lambda(\sigma)} .
 $$
Indeed, for each internal vertex in $\hat \Lambda$ which is not a saddle point, the edge of $\eta$ joining the diagonal with equal spins is forced to be open (due to the compatibility requirement between $\sigma$ and $\eta$), and contributes weight 1. For internal saddle points, either exactly one of the primal or dual edges could be open (each such possibility contributing weight 1) or both could be open, contributing weight $\alpha$. Thus, the overall contribution to the weight from each internal saddle point is $2+\alpha$. Finally, each non-internal vertex is necessarily a saddle point in $\sigma$ and is forced to be an open cross in $\eta$ by the wired-wired boundary conditions. We emphasize here that the boundary condition neither forbids nor forces the internal vertices of $\hat\Lambda$ in $\partial ^\dagger \Lambda$ to be saddle points. Since $\text{saddle}_\Lambda(\sigma)$ only consists of internal saddle points, this leads to the above equality.
Comparing this expression with~\eqref{eq:spin_boundary}, we see that the first marginal is exactly $\P^{\textsf{spin},ij}_{\Lambda,c}$ (in fact, we also see that $\bZ^{ij}_{\Lambda, \alpha} = Z^{\textsf{spin},ij}_{\Lambda,c}$).

To compute the second marginal, we fix a superimposed configuration $\eta \in \Omega^{\mathsf{SI},11}_\Delta$ and sum over $\sigma$ to obtain
$$ \sum_{\sigma  \in \Omega^{\mathsf{spin},ij}_{\Lambda}} \bP^{ij}_{\Lambda,\alpha}(\sigma,\eta) =
\frac1{ \bZ^{ij}_{\Lambda, \alpha}} \alpha^{N_\Delta(\eta)} |\{\sigma  \in \Omega^{\mathsf{spin},ij}_{\Lambda} : \sigma \sim \eta \}|
=
 \frac1{ \bZ^{ij}_{\Lambda, \alpha}} \alpha^{N_\Delta(\eta)} 2^{k_\Delta(\eta)-2} .
$$
Indeed, due to the compatibility requirement between $\sigma$ and $\eta$, each cluster in $\eta$ must receive a single spin (that is, all vertices in a given cluster must receive the same spin). Also we have two choices for this spin for each non-boundary cluster, and these choices can be made independently of each other.
However, the spins of the unique primal and dual boundary clusters (because of the wired boundary condition) are determined by the boundary condition.
Comparing this expression with~\eqref{eq:superimposed}, we see that the second marginal is exactly $\P^{\mathsf{SI},11}_{\Delta,\alpha,2}$ (in fact, we also see that $4\bZ^{ij}_{\Lambda,\alpha} = Z_{\Delta, \alpha,2}^{\mathsf{SI},\tau}$).

The description of the conditional laws is now immediate.
\end{proof}

\begin{remark}
The above coupling can be extended to more general domains and boundary conditions in the same spirit as for the Potts and random-cluster models.
\end{remark}

\begin{remark}\label{rem:bkw}
Let us mention a connection with the BKW coupling~\cite{BKW76}, for those familiar with it.
The BKW coupling is a coupling between the six-vertex model with $c \ge 2$ and the random-cluster model with $q \ge 4$.
\cref{P:ES_coupling} gives a coupling between the six-vertex model with $c \ge 2$ and the superimposed model with $q=2$ and $\alpha \ge 0$.
This gives a coupling $P$ of all three models together, where the random-cluster and superimposed models are conditionally independent given the spin representation.
When $\alpha=0$, open crosses are not allowed, and it can be checked that the superimposed model with $q=2$ coincides with the critical random-cluster model with $q=4$.
In this case, the coupling in \cref{P:ES_coupling} and the BKW coupling are essentially the same. However, the coupling induced by $P$ does not reflect this fact.
This raises the question of whether there is a more natural coupling between the superimposed model with $q=2$ and the random-cluster model with $q \ge 4$.
\end{remark}
\begin{remark}
The six-vertex model considered in this paper can be obtained as an infinite-coupling limit of the mixed Ashkin--Teller model in the sense of \cite{huang2013hintermann}. By a calculation inspired by the one presented in \cite{chayes2000lebowitz}, one can obtain the superimposed model as the limit of a random-cluster representation of the mixed Ashkin--Teller model. We also point out the paper \cite{pfister1997random} where a random-cluster representation of the Ashkin--Teller model was studied. We thank Alexander Glazman and Ron Peled for bringing these papers to our attention.
\end{remark}

\subsection{Monotonicity}
\label{sec:monotone}

The superimposed model possesses a monotonicity property (FKG) with respect to boundary conditions.
It is easy to see that the model is actually not monotonic in the usual partial order on both lattices. Nevertheless, it turns out that it is monotonic with respect to the partial order that is reversed on one of the lattices. Precisely, denote by $\preceq$ the partial order on $\{0,1\}^{E(\L) \cup E(\L^*)}$ obtained from the usual pointwise order on $E(\L)$ and the reverse order on $E(\L^*)$. That is, for $\eta,\xi \in \{0,1\}^{E(\L) \cup E(\L^*)}$,
\begin{equation}
\eta \preceq \xi \qquad\text{if and only if}\qquad \eta^0 \le \xi^0 \text{ and }\eta^1 \ge \xi^1, \label{eq:po}
\end{equation}
where $\le$ is used to denote the usual pointwise order.
Recall that $\eta$ may be viewed as an element $\hat\eta$ of $\{(1,0),(1,1),(0,1)\}^{\Z^2}$ according to the possible values of $(\eta^0,\eta^1)$ on any cross. We note that if one replaces the three values $(1,0),(1,1),(0,1)$ with $1,0,-1$, respectively, then the above partial order simply translates to the usual pointwise order on $\{1,0,-1\}^{\Z^2}$ in the sense that $\eta \preceq \xi$ if and only if $\hat\eta \le \hat\xi$.
 
\begin{prop}\label{prop:FKG}
Fix $q \ge 1$ and $\alpha>0$. Let $\Delta \subset E(\L) \cup E(\L^*)$ be finite and let $\tau, \tau' \in \Omega^{\mathsf{SI}}$ be two boundary conditions such that $\tau \preceq \tau'$. Then $ \P^{\mathsf{SI},\tau'}_{\Delta, \alpha,q}$ stochastically dominates $\P^{\mathsf{SI},\tau}_{\Delta, \alpha,q}$.
\end{prop}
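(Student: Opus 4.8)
The plan is to verify the Holley criterion (equivalently, the FKG lattice condition for the reversed order $\preceq$) for the single-site conditional probabilities of $\P^{\mathsf{SI},\tau}_{\Delta,\alpha,q}$, viewed as a measure on $\{(1,0),(1,1),(0,1)\}^{\Z^2}$, or equivalently on $\{1,0,-1\}^{\Z^2}$ via the identification $(1,0)\leftrightarrow 1$, $(1,1)\leftrightarrow 0$, $(0,1)\leftrightarrow -1$. Under this identification the partial order $\preceq$ becomes the usual pointwise order on $\{1,0,-1\}^{\Z^2}$, so it suffices to show that the conditional law of $\hat\eta_x$ at a single cross $x$, given the configuration $\hat\eta$ on $\Z^2\setminus\{x\}$, is monotone increasing (in stochastic order) in the surrounding configuration; standard theory (Holley's inequality together with the fact that monotonicity of single-site conditionals with respect to a fixed total order on the local state space implies the FKG lattice condition) then yields stochastic domination, and the boundary condition $\tau$ enters only as a boundary value of the surrounding configuration, so $\tau\preceq\tau'$ gives the claimed domination of the full measures.

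First I would write down the single-site conditional at a cross $x$ explicitly. The cross $x$ carries two edges, a primal edge $e\in E(\L)$ and its dual $e^*\in E(\L^*)$; changing $\hat\eta_x$ among the three allowed values affects (i) the indicator that no cross is closed --- but here the only constraint involving $x$ is that $x$ itself is not closed, which is automatic for all three allowed values, so this indicator plays no role once we restrict to the three allowed local states; (ii) the factor $\alpha^{N_\Delta(\eta)}$, which contributes a factor $\alpha$ exactly when $\hat\eta_x=0$ (the open cross $(1,1)$); and (iii) the factor $q^{k_\Delta(\eta)}$, through the number of primal clusters (affected by whether $e$ is open, i.e.\ whether $\hat\eta_x\in\{1,0\}$) and the number of dual clusters (affected by whether $e^*$ is open, i.e.\ whether $\hat\eta_x\in\{0,-1\}$). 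Concretely, let $a\in\{0,1\}$ record whether the two primal endpoints of $e$ are already joined off $e$ in $\eta^0$, and $b\in\{0,1\}$ whether the two dual endpoints of $e^*$ are already joined off $e^*$ in $\eta^1$. Then the unnormalized weights of the three states are proportional to
\begin{align}
\hat\eta_x = 1:&\quad q^{\,a-1}\cdot q^{\,k^*_0}, \nonumber\\
\hat\eta_x = 0:&\quad \alpha\cdot q^{\,a-1}\cdot q^{\,b-1}, \nonumber\\
\hat\eta_x = -1:&\quad q^{\,k_0}\cdot q^{\,b-1}, \nonumber
\end{align}
where $k_0$ (resp.\ $k^*_0$) is the number of primal (resp.\ dual) clusters when $e$ (resp.\ $e^*$) is open; after dividing through by the common factor these reduce to weights depending only on $a$ and $b$, namely proportional to $(1,\ \alpha q^{\,b-1},\ q^{\,b-1})$ up to an overall $a$-dependent factor --- wait, the $a$-dependence must be tracked carefully: opening $e$ merges two primal clusters precisely when $a=1$, so the relative primal weight of "$e$ open" versus "$e$ closed" is $q^{-1}$ if $a=1$ and $1$ if $a=0$; symmetrically for $b$ and the dual side. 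I would record the resulting $3$-point distribution on $\{1,0,-1\}$ as a function of $(a,b)\in\{0,1\}^2$ and of the parameters $q\ge1,\alpha>0$.

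The key step is then to check that this single-site distribution is monotone: increasing the surrounding configuration in the $\preceq$-order can only increase $a$ (more primal edges open elsewhere can only help connect the endpoints of $e$) and can only decrease $b$ (fewer dual edges open elsewhere), so I need that the $3$-point law on $\{1,0,-1\}$ (with $1>0>-1$) stochastically increases as $a$ goes $0\to1$ and as $b$ goes $1\to0$. Since there are only finitely many cases ($a,b\in\{0,1\}$) and the state space has three points, each such stochastic-domination check reduces to two elementary inequalities comparing products of $q$-powers and $\alpha$, all of which hold because $q\ge1$ (the $q^{-1}\le1$ factors push mass in the monotone direction) and $\alpha>0$. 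I also need to note that when some of the relevant endpoints lie outside $\Delta$ or are pinned by the boundary condition, the same computation goes through with $a$ or $b$ forced to a fixed value, and that increasing the boundary condition $\tau\preceq\tau'$ is exactly an instance of increasing the surrounding configuration. Having verified the FKG lattice condition (Holley's criterion) for the conditional distributions with respect to the fixed order $1>0>-1$ on each coordinate, the conclusion follows from Holley's inequality (see, e.g., \cite[Theorem~2.3]{grimmett2006random}), which gives stochastic domination of $\P^{\mathsf{SI},\tau'}_{\Delta,\alpha,q}$ over $\P^{\mathsf{SI},\tau}_{\Delta,\alpha,q}$ in the order $\preceq$.

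The main obstacle I anticipate is purely bookkeeping: correctly accounting for how opening or closing the pair $\{e,e^*\}$ changes $k_\Delta(\eta^0)$ and $k_\Delta(\eta^1)$ in all boundary cases (endpoints inside/outside $\Delta$, merged or not by $\tau$), and making sure the ``no closed cross'' constraint genuinely imposes no restriction on the three-element local state space so that Holley's criterion applies to a product local order rather than a constrained one. Once the local weights are written cleanly as functions of $(a,b)$, the monotonicity inequalities themselves are immediate from $q\ge1$.
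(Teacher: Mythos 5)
Your strategy is essentially the one the paper uses: verify Holley's criterion for single-site conditional distributions via the domain Markov property, and reduce everything to inequalities that hold because $q\ge 1$. The only real difference is the choice of ``site'': the paper conditions on a single edge $e$ (keeping its dual as part of the environment), so the local state is binary and the conditional probability of $\{\eta_e=1\}$ is $1$, $\frac{\alpha}{1+\alpha}$ or $\frac{\alpha}{q+\alpha}$ according to whether $\xi_{e^*}=0$, or $\xi_{e^*}=1$ with the endpoints of $e$ connected off $e$, or $\xi_{e^*}=1$ without that connection; you condition on a whole cross, getting a three-state site and four $(a,b)$-cases. Both routes work, and yours has the small advantage that the no-closed-cross constraint disappears from the local state space entirely; for the three-valued local space you should invoke the comparison theorem for monotone specifications (e.g.\ \cite[Theorem~4.8]{georgii2001random}, which is what the paper cites) rather than the binary Holley inequality. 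One bookkeeping slip to fix: opening $e$ merges two primal clusters precisely when $a=0$, not $a=1$, so the relative weight of ``$e$ open'' versus ``$e$ closed'' is $q^{-1}$ when $a=0$ and $1$ when $a=1$; consequently the relative weights of the states $1,0,-1$ at the cross are $q^{a-1}$, $\alpha\, q^{a-1}q^{b-1}$, $q^{b-1}$, and they do not factor as an $a$-dependent constant times $b$-only weights. With these corrected weights the four stochastic-domination checks (increasing in $a$, decreasing in $b$) do reduce to elementary inequalities valid for all $q\ge1$ and $\alpha>0$, so your conclusion stands once the computation is redone carefully.
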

\begin{proof}
By~\cite[Theorem~4.8]{georgii2001random}, we only need to check \emph{Holley's criterion}. That is, we need to check that, for any $e \in E(\L)$ and $\xi,\xi' \in \Omega^{\mathsf{SI}}$ with $\xi \preceq \xi'$,
\[ \P^{\mathsf{SI},\xi}_{\{e\},\alpha,q}(\eta_e=1) \le \P^{\mathsf{SI},\xi'}_{\{e\},\alpha,q}(\eta_e=1) \qquad\text{and}\qquad \P^{\mathsf{SI},\xi}_{\{e^*\} , \alpha,q} (\eta_{e^*}=1) \ge \P^{\mathsf{SI},\xi'}_{\{e^*\} , \alpha,q} (\eta_{e^*}=1) .\]
Note that we used here the domain Markov property, namely, that
\[ \P^{\mathsf{SI},\tau}_{\Delta,\alpha,q}(\eta_e \in \cdot \mid \eta = \xi\text{ on }\Delta \setminus \{e\})=\P^{\mathsf{SI},\xi}_{\{e\},\alpha,q} .\]
Now observe that by~\eqref{eq:superimposed}, for any $e=\{u,v\} \in E(\L) \cup E(\L^*)$,
\begin{equation}\label{eq:cond_superimposed}
\P^{\mathsf{SI},\xi}_{\{e\} , \alpha,q} (\eta_e=1) = \begin{cases}
 1 &\text{if } \xi_{e^*} = 0\\
 \frac{\alpha}{1+\alpha} &\text{if } \xi_{e^*} = 1,~ u \overset{\xi \setminus \{e\}}\leftrightarrow v\\
 \frac{\alpha}{q+\alpha} &\text{if } \xi_{e^*} = 1,~ u \overset{\xi \setminus \{e\}}{\not\leftrightarrow} v
\end{cases} .
\end{equation}
Finally, since $q \ge 1$, it is straightforward to verify Holley's criterion.
\end{proof}

\begin{remark}
As we mentioned before, flipping the order in one of the lattices is crucial for the stochastic domination to hold; a similar behavior exists in hardcore model~\cite{georgii2001random}.
Another interesting question concerns monotonicity of the model in the parameter $\alpha$. It is unclear whether the measures $\P^{\mathsf{SI},\tau}_{\Delta, \alpha,q}$ (or their marginals on $\eta^0$ and $\eta^1$) are monotonic in $\alpha$ (in the usual pointwise order). See \cref{quest:monotone}.
\end{remark}

Recall the wired-free and free-wired boundary conditions from \cref{sec:si}.
Note that the wired-free  and free-wired boundary conditions correspond to the unique maximal and minimal elements in $\Omega^{\mathsf {SI}}$ according to the above partial order. It immediately follows from \cref{prop:FKG} that the wired-free measure $\P^{\mathsf{SI},10}_{\Delta,\alpha,q}$ (resp. free-wired measure $\P^{\mathsf{SI},01}_{\Delta,\alpha,q}$) is the biggest (resp. smallest) superimposed measure in $\Delta$ in the sense of stochastic. In particular, the wired-wired measure $\P^{\mathsf{SI},11}_{\Delta,\alpha,q}$ (which played an important role in the coupling with the six-vertex model in \cref{sec:si-spin-coupling}) lies in between these two extremal measures.

\subsection{Uniqueness of the Gibbs measure for large $\alpha$}\label{sec:uniqueness}

The goal of this section is to establish the existence of a unique infinite-volume superimposed measure for large $\alpha$. For this argument, we do not require the monotonicity established in the previous section and hence it applies to all $q>0$. We prove uniqueness for large enough $\alpha$, and do not know whether it holds in general; see \cref{quest:uniqueness}.

The unique measure obtained will be translation-invariant on $\{0,1\}^{E(\L) \cup E(\L^*)}$. Let us first define precisely what we mean by this (as there are several lattices around). A translation $T \colon \Z^2 \to \Z^2$ can be viewed also as a translation on $E(\L) \cup E(\L^*)$ by $T(e) = \{T(u),T(v)\}$ for $e=\{u,v\} \in E(\L) \cup E(\L^*)$. 
A measure $\mu$ on $\{0,1\}^{E(\L) \cup E(\L^*)}$ is translation-invariant if it is preserved by any such translation.

We also give some consequences of monotonicity in the case $q \ge 1$. In this case, there are two extremal infinite-volume superimposed measures. Indeed, it follows from \cref{prop:FKG} that $\P^{\mathsf{SI},10}_{\Delta,\alpha,q}$ stochastically decreases as $\Delta \uparrow E(\L) \cup E(\L^*)$ and hence converges to a probability measure $\P^{\mathsf{SI},10}_{\alpha,q}$. Similarly, $\P^{\mathsf{SI},01}_{\Delta,\alpha,q}$ converges to a measure $\P^{\mathsf{SI},01}_{\alpha,q}$.
We note that this convergence implies that these measures are even-translation-invariant in the following sense.
We call a translation \emph{even} if it preserves $E(\L)$ (and hence also $E(\L^*)$), and \emph{odd} otherwise.
For example, $x \mapsto x+(1,0)$ is an odd translation.
Then it is straightforward to check that $\P^{\mathsf{SI},01}_{\alpha,q}$ and $\P^{\mathsf{SI},10}_{\alpha,q}$ are preserved by even translations. In fact, for any odd translation $T$, we have that
\[ T * \P^{\mathsf{SI},10}_{\alpha,q} = \P^{\mathsf{SI},01}_{\alpha,q} \qquad\text{and}\qquad T * \P^{\mathsf{SI},01}_{\alpha,q} = \P^{\mathsf{SI},10}_{\alpha,q}  .\]
In particular, when the two extremal measures are equal, the common measure is translation-invariant.
We remark that these two measures are Gibbs measures in the usual DLR sense (this can be shown by an adaptation of the arguments used for the usual random-cluster model; see Theorem 4.31 and 4.33 in \cite{grimmett2006random}), though we do not use this fact here.

For $\eta \in \Omega^{\mathsf{SI}}$, let
\[ X^\eta = (\1_{\{\hat\eta_x=(1,1)\}})_{x \in \Z^2} \]
denote the site percolation configuration on $\Z^2$ consisting of open crosses in $\eta$. Let $p_c$ be the critical value for Bernoulli site percolation on $\Z^2$.

\begin{thm}\label{thm:uniqueness}
Let $q>0$ and $\alpha > \frac{p_c}{1-p_c} \cdot \max\{q+1,2\}$. Then
\begin{enumerate}
\item
There exists a translation-invariant probability measure $\P^{\mathsf{SI}}_{\alpha,q}$ on $\Omega^{\mathsf{SI}}$ such that $\P^{\mathsf{SI},\tau}_{\Delta(\Lambda),\alpha,q}$ converges to $\P^{\mathsf{SI}}_{\alpha,q}$ as $\Lambda$ increases to $(\Z^2)^*$ along diamond domains for any $\tau \in \Omega^{\mathsf{SI}}$.

\item If $\eta$ is sampled from $\P^{\mathsf{SI}}_{\alpha,q}$, then $X^\eta$ stochastically dominates a supercritical Bernoulli site percolation on~$\Z^2$. In particular, almost surely, both $\eta^0$ and $\eta^1$ contain a unique infinite cluster.
\end{enumerate}
\end{thm}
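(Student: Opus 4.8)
The plan is to show that under the hypothesis on $\alpha$, conditioning on everything outside a diamond domain and on the boundary configuration, the event that a cross is open (i.e. $\hat\eta_x=(1,1)$) has conditional probability bounded below by a constant $p$ with $p>p_c$, uniformly over the conditioning and over the cross $x$ (as long as $x$ is an internal cross, away from the wired boundary where crosses may be forced open anyway). This is a finite-energy-type estimate. The key computation is to bound, for a single cross sitting at $x$, the ratio between the weight of $\hat\eta_x=(1,1)$ and the combined weight of the other two options $(1,0)$ and $(0,1)$, given the rest of the configuration. Opening the cross creates $+1$ to $N$ (contributing a factor $\alpha$) and changes $k(\eta^0)+k(\eta^1)$ by at most... here one must be careful: switching from $(1,0)$ to $(1,1)$ adds a dual edge, which decreases $k(\eta^1)$ by at most $1$ (it may merge two dual clusters, costing a factor $q^{-1}$ if $q\ge 1$, or gaining if $q<1$) while leaving $\eta^0$ untouched. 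So the relevant comparison is $\alpha$ against $\max\{q,1\}$ for each of the two single-edge moves, and against $\max\{q^2,1\}$-type bounds if one insists on comparing directly to the closed cross — but closed crosses are forbidden, so only the comparison among $(1,0),(1,1),(0,1)$ matters, which is why the bound involves $\max\{q+1,2\}$ rather than something larger. Concretely, $\P^{\mathsf{SI},\xi}_{\{x\},\alpha,q}(\hat\eta_x=(1,1)) \ge \frac{\alpha}{\alpha + \max\{q+1,2\}}$, and this exceeds $p_c$ exactly when $\alpha > \frac{p_c}{1-p_c}\max\{q+1,2\}$.

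With this uniform lower bound in hand, I would invoke a domination-by-product-measures theorem for the conditional law of $X^\eta$. Since $X^\eta$ restricted to internal crosses is, conditionally on any realization of the remaining crosses, a field in which each coordinate is $1$ with probability at least $p>p_c$ given the others, the Liggett--Schonmann--Stacey theorem (as in \cite[Theorem~7.65]{Grimmett_percolation}, though here we need the version for conditional rather than $k$-dependent fields — the relevant statement is that a field dominating $p$ in every single-site conditional probability dominates Bernoulli($p$)) gives that $X^\eta$ under $\P^{\mathsf{SI},\tau}_{\Delta(\Lambda),\alpha,q}$ stochastically dominates a Bernoulli($p$) site percolation on the internal crosses of $\hat\Lambda$; the boundary crosses only help. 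Passing to the limit along diamond domains, any subsequential weak limit $\P^{\mathsf{SI}}_{\alpha,q}$ inherits the domination: $X^\eta$ dominates supercritical Bernoulli site percolation on all of $\Z^2$, which in particular forces $X^\eta$ to have a unique infinite cluster; since open crosses are simultaneously open edges in $\eta^0$ and in $\eta^1$, each of $\eta^0,\eta^1$ then contains an infinite cluster. Uniqueness of the infinite cluster in $\eta^0$ and $\eta^1$ separately follows from a Burton--Keane argument applied to the (even-)translation-invariant limit, using the finite-energy property established above.

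For the existence and convergence statement in part (1), the cleanest route is to show that the lower bound on crosses forces uniqueness directly: by the finite-energy estimate, the influence of the boundary condition $\tau$ decays because any path from the boundary of $\Delta(\Lambda)$ to a fixed cross must cross the infinite cluster of open crosses, which acts as a shield. More precisely, I would argue that conditioned on $X^\eta$ (the set of open crosses), the superimposed model factorizes over the complementary regions — the interior of each finite component of the complement of $X^\eta$ carries an independent random-cluster-like configuration — so the law of $\eta$ near the origin depends on $\tau$ only through whether the origin's complementary component reaches $\partial\Delta(\Lambda)$, an event of probability tending to $0$ by supercriticality of $X^\eta$ and exponential decay of the complement's components (here one uses that the complement of a supercritical percolation cluster has components with exponential tails, \cite[Theorem~7.65]{Grimmett_percolation} again via domination). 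Hence all boundary conditions yield the same limit $\P^{\mathsf{SI}}_{\alpha,q}$, and translation invariance of the limit follows since the diamond domains exhaust $(\Z^2)^*$ symmetrically. The main obstacle I anticipate is making the ``factorization over complementary components of $X^\eta$'' rigorous and uniform enough to extract genuine convergence (not just tightness) of $\P^{\mathsf{SI},\tau}_{\Delta(\Lambda),\alpha,q}$ — in particular controlling the cluster-counting term $q^{k_\Delta}$ carefully when $q\ne 1$, since clusters can wrap around and the naive conditional-independence picture needs the open crosses to genuinely disconnect primal clusters from one another, which requires that an open cross blocks both a primal and a dual edge simultaneously in the right way. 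This geometric point — that the open crosses form a disconnecting set for both $\eta^0$ and $\eta^1$ at once — is the crux and deserves a careful lemma.
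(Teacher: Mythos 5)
Your overall strategy coincides with the paper's: a uniform single-cross estimate showing each internal cross is open with conditional probability at least $p=\frac{\alpha}{\alpha+\max\{q+1,2\}}>p_c$, domination of $X^\eta$ by supercritical Bernoulli site percolation, and a shielding argument in which circuits of open crosses decouple the interior from the boundary condition. However, two steps have genuine gaps.

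First, your justification of the key estimate does not yield the constant you state. Comparing $(1,1)$ separately against $(1,0)$ and against $(0,1)$, each ``at cost $\max\{q,1\}$'' as you write, only gives $\frac{\alpha}{\alpha+2\max\{q,1\}}$, and for $q>1$ the hypothesis $\alpha>\frac{p_c}{1-p_c}\max\{q+1,2\}$ does not make that supercritical (note $2q>q+1$). The missing input is not that the closed state of the cross at $x$ is excluded, but that the \emph{surrounding} configuration $\xi$ contains no closed crosses: by planar duality this forces at least one of the two connections ($u\leftrightarrow v$ in $\xi\setminus\{e_x\}$, or $w\leftrightarrow z$ in $\xi\setminus\{e_x^*\}$) to hold, so at most one of the two competing states picks up a factor $q$ and the denominator is at most $\alpha+\max\{q+1,2\}$; this is exactly how the paper derives~\eqref{dom} from~\eqref{eq:cond_superimposed}. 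Without this observation the claimed bound, and hence the match with the theorem's hypothesis, is unjustified for $q>1$ (in particular for the case $q=2$ used later).

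Second, the ``factorization over complementary components of $X^\eta$'' which you yourself flag as the crux is precisely the unproven core of part (1), and your formulation (conditioning on all of $X^\eta$) is more delicate than needed. The paper's route is: a path of open crosses merges all incident primal faces into a single $\eta^0$-cluster and all incident dual faces into a single $\eta^1$-cluster (\cref{L:top}); consequently, conditioning on the configuration outside a circuit of open crosses gives \emph{exactly} the wired-wired measure inside (\cref{lem:dmp}) -- this is also what settles the $q^{k}$ bookkeeping you worry about, since the circuit collapses everything it touches to one primal and one dual cluster. One then explores from the boundary inward until the outermost open circuit of the dominated Bernoulli field, obtaining a \emph{simultaneous} coupling of all boundary conditions that agree inside that circuit (\cref{lem:bc-coupling}). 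This uniformity in $\tau$ is needed: the law of $X^\eta$ itself depends on $\tau$, so a per-$\tau$ conditional factorization does not by itself compare marginals on a fixed finite set across boundary conditions, and genuine convergence along $\Lambda$ (rather than tightness) is then extracted by combining the coupling with the domain Markov property. Finally, note that deducing infinite clusters in $\eta^0$ and $\eta^1$ from an infinite cluster of open crosses again uses \cref{L:top}. None of these steps is long, but as written your proposal asserts them rather than proves them.
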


We leave open the question of uniqueness of the Gibbs measure and existence of an infinite cluster for general $\alpha$; see \cref{quest:uniqueness,quest:infinite-cluster}.

The proof of the theorem is based on a disagreement percolation argument similar to the one introduced by van den Berg and Maes~\cite{van1994disagreement}.
However, as our model is not defined through a nearest-neighbor interaction, its Gibbs states are not Markov random fields. This presents some difficulty in the argument. A similar issue exists in the random-cluster model, though there, the monotonicity and the facts that both closed and open circuits allow for the use of a domain Markov property are very helpful. In the superimposed model, though we have some monotonicity, the lack of a similar domain Markov property for open circuits in $\eta^0$ (because information can pass through from inside to outside through open crosses) makes the above difficulty persist. However, for circuits of open crosses there is a certain domain Markov property, which we establish below.

We start with the following simple topological claim, the proof of which is straightforward to check and is left to the reader (see also \cref{fig:circuit}).
\begin{lemma}\label{L:top}
Take a path $P$ in $\Z^2$ and let $\xi \in \{0,1\}^{E(\L) \cup E(\L^*)}$ be the configuration in which all the crosses corresponding to vertices in $P$ are open. Let $f,g \in \L$ or $f,g \in \L^*$ be two faces of $\Z^2$, each of which is incident to some vertex in $P$. Then $f$ with $g$ are connected by an open path in $\xi$.
\end{lemma}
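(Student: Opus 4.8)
The plan is to reduce the claim to a purely combinatorial fact about how the cross at a vertex $x\in\Z^2$ sits relative to the two faces of $\Z^2$ incident to $x$ in each of the two sublattices. First I would record the relevant incidences: each vertex $x$ of $\Z^2$ is a corner of exactly four faces of $\Z^2$, two of which lie in $\L$ and two in $\L^*$, and in each sublattice the two faces incident to $x$ are diagonally opposite. By definition of the cross at $x$, the primal edge $e_x\in E(\L)$ is exactly the edge of $\L$ joining the two $\L$-faces incident to $x$, and similarly the dual edge $e_x^*\in E(\L^*)$ joins the two $\L^*$-faces incident to $x$. Consequently, if the cross at $x$ is open in $\xi$, then $e_x$ is an open edge of $\xi$ whose endpoints are the two $\L$-faces at $x$, and $e_x^*$ is an open edge of $\xi$ whose endpoints are the two $\L^*$-faces at $x$.

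Second, I would use the fact that two adjacent vertices $x,x'$ of $\Z^2$ are corners of exactly two common faces, one lying in $\L$ and one in $\L^*$ (immediate from inspecting a single edge of $\Z^2$, together with translation and rotation invariance). Writing $x_0,\ldots,x_k$ for the vertices of $P$, the common $\L$-face of $x_i$ and $x_{i+1}$ is then an endpoint of both $e_{x_i}$ and $e_{x_{i+1}}$, so these two edges share an endpoint in $\L$. Since every cross along $P$ is open in $\xi$, it follows that $\{e_{x_0},\ldots,e_{x_k}\}$ is a connected subgraph of $\L$ all of whose edges are open in $\xi$, and likewise $\{e_{x_0}^*,\ldots,e_{x_k}^*\}$ is a connected subgraph of $\L^*$ all of whose edges are open in $\xi$.

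Finally, given $f,g\in\L$ with $f$ incident to $x_i\in P$ and $g$ incident to $x_j\in P$, the first step shows that $f$ is an endpoint of $e_{x_i}$ and $g$ is an endpoint of $e_{x_j}$, so $f$ and $g$ both lie on the connected, fully open subgraph $\{e_{x_0},\ldots,e_{x_k}\}$ of $\L$ produced in the second step; hence $f$ and $g$ are joined by an open path in $\xi$. The case $f,g\in\L^*$ is identical with $e_{x_i}$ replaced by $e_{x_i}^*$ and $\L$ by $\L^*$ throughout. I do not expect any genuine obstacle beyond carrying out the incidence bookkeeping carefully; the one point worth a moment's care is the verification that consecutive vertices of $P$ share exactly one face in each sublattice, since this is precisely what forces the crosses along $P$ to overlap in the way needed to produce the connected subgraphs.
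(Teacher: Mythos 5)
Your proof is correct: the observation that the cross at a vertex $x$ of $\Z^2$ consists precisely of the $\L$-edge joining the two $\L$-faces incident to $x$ and the $\L^*$-edge joining the two $\L^*$-faces incident to $x$, combined with the fact that consecutive vertices of $P$ share one face in each sublattice, yields the connected open subgraphs exactly as needed. The paper leaves this lemma to the reader as a straightforward check (illustrated in its figure of primal/dual edges along a circuit of open crosses), and your incidence bookkeeping is exactly the intended argument.
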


We say $x$ and $y$ in $\Z^2$ are $*$-adjacent if either they are neighbors in $\Z^2$ or they belong to diagonally opposite corners of a face in $\Z^2$. For a subset $U$ of $\Z^2$, we write $\partial^* U$ for the external vertex boundary of $U$ in the $*$-adjacency. That is, $\partial^* U$ is the set of vertices in $\Z^2 \setminus U$ which are $*$-adjacent to some vertex in $U$. We define $\partial^* U$ analogously for $U \subset (\Z^2)^*$.

\begin{figure}
\centering
\includegraphics[width = 0.52\textwidth]{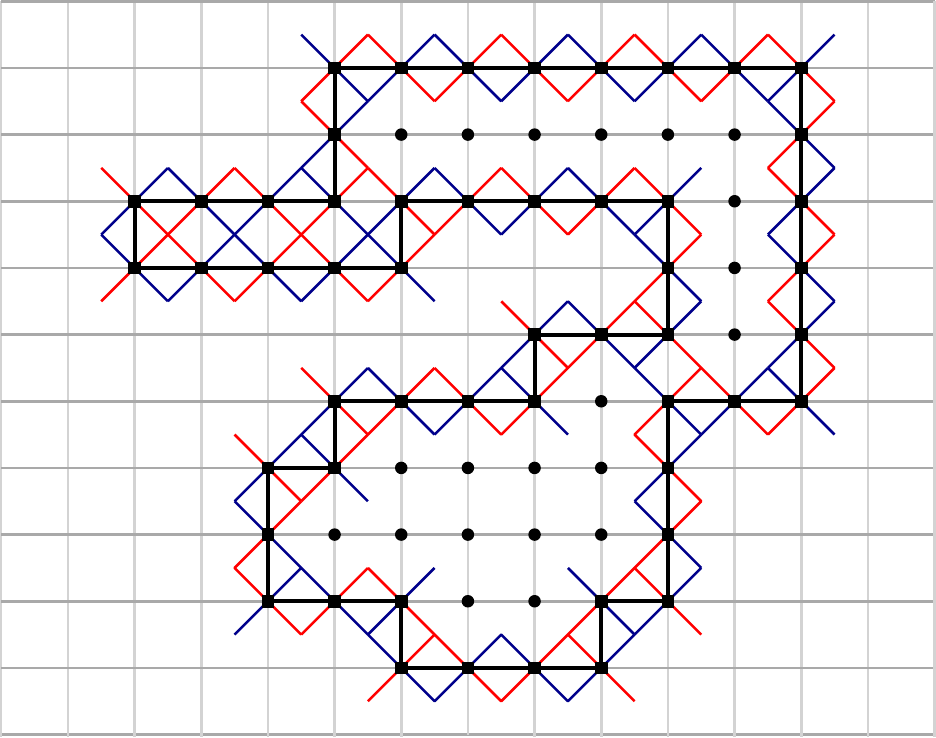}
\caption{Proof of \cref{lem:dmp}. The solid discs denote $\cX$ and the black path is the circuit $\cC$. The primal and dual edges formed by the open crosses in the vertices of $\cC$ are drawn in blue and red, respectively. Note that $\cX$ might not be $*$-connected in $\Z^2$.}
\label{fig:circuit}
\end{figure}

\begin{lemma}\label{lem:dmp}
Let $\cC$ be a simple circuit in $\Z^2$ and let $\cX$ be the set of vertices strictly inside it (i.e., not including the vertices of $\cC$).
Let $\Delta$ be the union of the crosses corresponding to elements in $\cX$. Let $\tau \in \Omega^{\mathsf{SI}}$ be such that the vertices of $\cC$ are open in $X^\tau$. Then for all $\alpha>0$ and $q>0$,
 $$\P^{\mathsf{SI},\tau}_{\Delta,\alpha,q} = \P^{\mathsf{SI},11}_{\Delta,\alpha,q}.$$   
\end{lemma}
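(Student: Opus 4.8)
The plan is to show that the two measures in the statement assign the same weight (up to the normalizing constant) to every configuration in their common support, i.e., that the partition functions and weights match edge-for-edge. First I would argue that the two measures are supported on the same set of configurations. A configuration $\eta$ contributing to $\P^{\mathsf{SI},\tau}_{\Delta,\alpha,q}$ agrees with $\tau$ on $(E(\L)\cup E(\L^*))\setminus\Delta$, hence in particular all crosses of $\cC$ are open in $\eta$; a configuration $\eta'$ contributing to $\P^{\mathsf{SI},11}_{\Delta,\alpha,q}$ agrees with the wired-wired boundary outside $\Delta$, so again all crosses of $\cC$ are open (in fact all edges outside $\Delta$ are open). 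Since $\Delta$ is exactly the set of crosses over $\cX$, and the vertices of $\cC$ separate $\cX$ from the rest of $\Z^2$, the restriction of both $\tau$ and the wired-wired boundary to the edges incident to $\Delta$ but lying outside $\Delta$ is the same --- these are precisely the edges having an endpoint that is a face incident to a vertex of $\cC$, and they are all open under both boundary conditions (under $\tau$ because $X^\tau$ has $\cC$ open, which forces all primal and dual edges at those faces to be open; under wired-wired by definition). The no-closed-cross constraint is local, so $\Omega^{\mathsf{SI},\tau}_\Delta=\Omega^{\mathsf{SI},11}_\Delta$.

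Next I would check that the weights agree on this common support. The weight of $\eta$ is $\alpha^{N_\Delta(\eta)}q^{k_\Delta(\eta)}$ in both cases, with $N_\Delta$ and $k_\Delta$ depending only on $\eta$ restricted to $\Delta$ together with the connectivity structure of the clusters meeting $\Delta$. The quantity $N_\Delta(\eta)$, the number of open crosses intersecting $\Delta$, depends only on $\eta|_\Delta$ and on which edges incident to $\Delta$ from outside are open --- and those are the same (all open) under both boundary conditions. The only subtle point is $k_\Delta(\eta)$: the number of open clusters in $\eta^0$ (and in $\eta^1$) containing a vertex incident to $\Delta$. A priori this counts clusters-as-seen-through-the-boundary, so two clusters touching $\Delta$ could be merged by an open path that runs outside $\Delta$. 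Here is where \cref{L:top} enters: any two faces in $\L$ (resp.\ $\L^*$) incident to a vertex of the circuit $\cC$ are connected by an open path using only edges from the open crosses of $\cC$. Under the wired-wired boundary every outside edge is open, so all boundary-incident clusters are merged into one primal and one dual cluster; under $\tau$, the open circuit $\cC$ likewise connects all faces incident to $\cC$ (hence all vertices of $\L$ and of $\L^*$ incident to $\Delta$ from the outside) into a single primal and a single dual cluster, by \cref{L:top}. So in both cases the clusters meeting $\Delta$ are identified in exactly the same way, giving the same value of $k_\Delta(\eta)$.

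Combining the previous two paragraphs, the unnormalized weights coincide on the common support, hence after normalization $\P^{\mathsf{SI},\tau}_{\Delta,\alpha,q}=\P^{\mathsf{SI},11}_{\Delta,\alpha,q}$. The main obstacle I anticipate is the bookkeeping around $k_\Delta$: one has to be careful that "incident to $\Delta$" refers to vertices of $\L$ and $\L^*$ (faces of $\Z^2$), that the relevant faces are exactly those touching the circuit $\cC$, and that \cref{L:top} applies to give a single primal cluster and a single dual cluster in both scenarios --- I would state this identification explicitly rather than leave it implicit, perhaps with reference to \cref{fig:circuit}. Everything else is a direct comparison of the formula in \eqref{eq:superimposed} under two boundary conditions that have been shown to induce identical data on the boundary of $\Delta$.
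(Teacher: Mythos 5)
Your argument is correct and is essentially the paper's proof: after identifying configurations through their restriction to $\Delta$ (note that, taken literally, $\Omega^{\mathsf{SI},\tau}_\Delta \neq \Omega^{\mathsf{SI},11}_\Delta$ since the configurations differ far from $\Delta$ --- the lemma is really an identity for the law of $\eta|_\Delta$, which is exactly what your weight comparison establishes), the crux is that $k_\Delta$ agrees, and you resolve it the same way the paper does, via \cref{L:top} together with $\partial^*\cX\subset\cC$, so that under either boundary condition the outside can only touch $\Delta$-incident clusters through the single primal and single dual cluster of faces incident to $\cC$. One small correction to your wording: under $\tau$ only the crosses over vertices of $\cC$ are forced open, not ``all primal and dual edges at those faces''; what you actually need --- that every edge outside $\Delta$ sharing an endpoint with an edge of $\Delta$ is a cross over a vertex of $\partial^*\cX\subset\cC$, hence open under both boundary conditions --- is nonetheless true.
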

\begin{proof}
Let $\eta \in \Omega^{\mathsf{SI},\tau}_\Delta$ and let $\xi \in \Omega^{\mathsf{SI},11}_\Delta$ be the configuration which coincides with $\eta$ in $\Delta$.
It suffices to show that $k_\Delta(\eta^0)=k_\Delta(\xi^0)$ and $k_\Delta(\eta^1)=k_\Delta(\xi^1)$.
Since $\xi$ is obtained from $\eta$ by opening some edges, it suffices to show that any two distinct clusters of $\eta$ (both of which contain a vertex incident to $\Delta$) are also distinct clusters of $\xi$. Indeed, $\eta$ has only one primal (and one dual) cluster containing a face incident to $\cC$. This follows from \cref{L:top} and since $\partial^* \cX \subset \cC$ (see \cref{fig:circuit}).
\end{proof}

We also require the following lemma. Recall that, given a diamond domain $\Lambda$, $\partial^\dagger \Lambda$ denotes the simple circuit in $\Z^2$ which surrounds $\Lambda$ (see \cref{fig:height}) and $\hat \Lambda$ denotes the subgraph of $\Z^2$ induced by vertices of $\partial^\dagger \Lambda$ and all the vertices enclosed by $\partial^\dagger \Lambda$.

\begin{lemma}\label{lem:bc-coupling}
Let $\Lambda$ be a diamond domain and set $\Delta=\Delta(\Lambda)$.  Let $\cX$ be the set of internal vertices of $\hat \Lambda$.
Fix $\alpha>0$ and $q>0$.
Let $\rho$ be a Bernoulli percolation on $\cX$ with parameter $p=\frac{\alpha}{\max\{2,q+1\}+\alpha}$. For each $\tau \in \Omega^{\mathsf{SI}}$, let $\eta^\tau$ be a sample from $\P^{\mathsf{SI},\tau}_{\Delta,\alpha,q}$. Then one may couple $\rho$ and $\{\eta^\tau\}_{\tau \in \Omega^{\mathsf{SI}}}$ so that, almost surely,
\begin{enumerate}
 \item[(i)] $X^{\eta^\tau}_x \ge \rho_x$ for all $x \in \cX$ and $\tau \in \Omega^{\mathsf{SI}}$, and
 \item[(ii)] if $\Gamma$ is an open circuit in $\rho$, then all of $\{\eta^\tau\}_{\tau \in \Omega^{\mathsf{SI}}}$ coincide inside $\Gamma$.
\end{enumerate}
\end{lemma}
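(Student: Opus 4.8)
The plan is to exploit a variant of the standard Markov-chain / coupling-from-the-past style argument used for the random-cluster model, but adapted to the peculiarities of the superimposed model. First I would set up the coupling by a Gibbs-sampler (heat-bath) construction: on the finite set $\cX$ of internal vertices we run a Markov chain that, at each step, picks a uniformly random vertex $x \in \cX$ and resamples $\hat\eta_x$ from its conditional distribution given the rest of the configuration. Crucially, the conditional law of $\hat\eta_x=(1,1)$ (i.e.\ $X^\eta_x=1$) given the rest of the configuration is, by~\eqref{eq:cond_superimposed} (applied to both the primal edge $e$ and its dual $e^*$ through the cross at $x$, using that an open cross means both are present), bounded below by $p=\frac{\alpha}{\max\{2,q+1\}+\alpha}$ \emph{uniformly over the configuration off $x$ and over the boundary condition $\tau$}. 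Indeed the worst case is when neither endpoint connectivity helps on either lattice, giving a factor $\frac{\alpha}{q+\alpha}$ for one edge and $\frac{\alpha}{q+\alpha}$ or $\frac{\alpha}{1+\alpha}$ for the other; carefully multiplying out (both edges of the cross must be present simultaneously to form an open cross, but the other two states $(1,0)$ and $(0,1)$ are always individually available) yields the stated bound $p$. So I would use a \emph{common source of randomness}: at each step, a uniform site $x$ and a uniform $U\in[0,1]$; if $U\le p$ we \emph{force} $\hat\eta_x=(1,1)$ regardless of $\tau$, and otherwise we update $\hat\eta^\tau_x$ according to the appropriate conditional law (which is a deterministic function of $U$, the current $\eta^\tau$ restricted to the edges touching $x$, and $\tau$). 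Let $\rho_x=1$ iff in the limiting stationary regime the site $x$ was "last forced" before being "last freely updated" — more precisely, run this dynamics for all $\tau$ simultaneously from time $-\infty$ (coupling from the past) so that each $\eta^\tau$ is distributed as $\P^{\mathsf{SI},\tau}_{\Delta,\alpha,q}$, and declare $\rho_x=1$ if the most recent update at $x$ (before time $0$) was a forced update.

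With this construction, part~(i) is immediate: whenever $\rho_x=1$, the last update of $x$ set $\hat\eta^\tau_x=(1,1)$ for every $\tau$, and — here is the one point needing care — subsequent updates at \emph{other} sites never turn a $(1,1)$ at $x$ into something else, because resampling a neighboring cross does not touch the edges through $x$. Hence $X^{\eta^\tau}_x\ge\rho_x$ for all $\tau$ at time $0$. Moreover $\rho$ is genuinely Bernoulli($p$) on $\cX$: the "forced/free" decisions at distinct sites and distinct times are independent, and whether $x$'s last update was forced depends only on the independent fair-coin-like outcomes at $x$'s own update times, which gives probability exactly $p$ independently across $x\in\cX$.

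For part~(ii), suppose $\Gamma$ is an open circuit in $\rho$, with interior $\Lambda_\Gamma\subset\cX$. By part~(i), every vertex of $\Gamma$ is an open cross in every $\eta^\tau$. Now I would invoke the domain-Markov property for circuits of open crosses, \cref{lem:dmp}: conditioned on $X^{\eta^\tau}$ being $1$ on the circuit $\Gamma$ (a circuit inside $\Z^2$), the law of $\eta^\tau$ inside $\Gamma$ is $\P^{\mathsf{SI},11}_{\Delta',\alpha,q}$ where $\Delta'$ is the set of crosses strictly inside $\Gamma$ — \emph{independent of $\tau$ and of the configuration outside $\Gamma$}. To turn this distributional identity into an almost-sure coincidence, I would run the coupling-from-the-past a little more carefully: do the dynamics in two nested stages, first determining which sites are ever forced (this determines $\rho$, hence $\Gamma$), then — conditionally on $\Gamma$ — couple the updates of all $\{\eta^\tau\}$ strictly inside $\Gamma$ to use \emph{the very same} randomness (the same uniform $U$'s and the same chosen sites), which is legitimate precisely because \cref{lem:dmp} says the update rule restricted to the interior of $\Gamma$ does not see $\tau$ once $\Gamma$ is known to be open crosses. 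Running this interior chain from $-\infty$ makes all the interior configurations identical at time $0$. The remaining technical checks are that this nested coupling-from-the-past construction is consistent (the interior stage is measurable with respect to the first stage and does not disturb it — true, since interior updates don't affect the forced/free status of circuit or exterior vertices) and that the chain is ergodic so the stationary distributions are the claimed finite-volume measures (standard irreducibility, since from any configuration one can reach any other by a sequence of single-site updates, each of positive probability).

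\textbf{Main obstacle.} The delicate part is \emph{organizing the coupling so that (i) and (ii) hold simultaneously and almost surely}, rather than just in distribution. The naive approach — sample $\rho$ first, then try to "paste" $\P^{\mathsf{SI},11}$ inside each open circuit — fails because the circuits of $\rho$ are random and nested, and one must make sure the $\eta^\tau$'s agree inside the \emph{outermost} open circuit surrounding a given point without double-counting. The clean way around this is the two-stage coupling-from-the-past sketched above, where the key insight is that the heat-bath update rule at an interior site, once the surrounding circuit is known to consist of open crosses, is genuinely $\tau$-independent by \cref{lem:dmp}; verifying that this nesting is measurable and does not create circular dependencies is the real content of the proof. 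Everything else — the uniform lower bound $p$ from~\eqref{eq:cond_superimposed}, the monotone-under-resampling property of $(1,1)$ at a site, and the Bernoulli($p$) law of $\rho$ — is routine.
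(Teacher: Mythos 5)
The quantitative heart of this lemma is the uniform lower bound $p$ on the conditional probability of an open cross, and your justification of it does not work. Given everything off the cross at $x$, the conditional law is supported on the three states $(1,0),(1,1),(0,1)$, and a direct computation (not a product of the two single-edge probabilities in \eqref{eq:cond_superimposed}, each of which is conditional on the state of the \emph{other} edge of the cross) gives probability $\frac{\alpha}{2+\alpha}$ for the open cross when both diagonal connectivities hold ($u\leftrightarrow v$ off $e_x$ in the primal and $w\leftrightarrow z$ off $e_x^*$ in the dual), $\frac{\alpha}{q+1+\alpha}$ when exactly one holds, and $\frac{\alpha}{2q+\alpha}$ when neither holds. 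If the ``neither connectivity'' case could occur --- the very case you single out as the worst one --- the claimed bound $p=\frac{\alpha}{\max\{2,q+1\}+\alpha}$ would be false for $q>1$ (e.g.\ for $q=2$, $\frac{\alpha}{4+\alpha}<\frac{\alpha}{3+\alpha}$), and your ``multiplying out'' of $\frac{\alpha}{q+\alpha}$ and $\frac{\alpha}{1+\alpha}$ does not in fact produce $p$ either. The missing idea, which is where the paper's proof does its real work, is that the no-closed-cross constraint forces $\eta^1$ to contain the dual of $\eta^0$, so by planar duality at \emph{every} cross at least one of the two connectivities automatically holds; hence the true worst case is $\frac{\alpha}{q+1+\alpha}$ (or $\frac{\alpha}{2+\alpha}$ when $q\le 1$), which is exactly $p$. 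Without this observation the specific constant in the lemma --- the one that later makes $\rho$ supercritical in the stated range of $\alpha$ --- is not established.

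Your mechanism for (ii) also has a genuine hole. Driving the interior updates of all $\{\eta^\tau\}$ with the same randomness does not make them coincide at time $0$: a heat-bath update is a function of the randomness \emph{and of the current configuration}, and the interior configurations differ across $\tau$ at all times before the last (forced) updates of the circuit sites, since only from that time onward is the circuit of open crosses guaranteed to be in place (indeed the interior update rule is $\tau$-independent, via \cref{lem:dmp} and \cref{L:top}, only while the circuit crosses are currently open). Identical randomness applied to different states does not coalesce them, so ``running the interior chain from $-\infty$'' does not yield identical interiors. The paper instead reveals the configurations from the boundary inward, coupled with $\rho$ via the bound above, only until the outermost open circuits of $\rho$ are discovered, and then uses \cref{lem:dmp} as a \emph{distributional} statement: conditionally on the revealed exterior (with the circuit crosses open), the interior law is the wired-wired measure $\P^{\mathsf{SI},11}$ of the interior for every $\tau$, so a single common interior sample can be pasted in and re-coupled with $\rho$ inside to keep (i). If you wish to keep the CFTP framework (your part (i) and the Bernoulli($p$) law of $\rho$ are fine there, modulo the bound), you would need to replace ``same randomness $\Rightarrow$ same interior configuration'' by an honest argument that, conditionally on the driving randomness determining $\rho$ and the exteriors, the time-$0$ interior law is the same for all $\tau$; as written, that step is not justified.
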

\begin{proof}
In this proof it is convenient to consider the $\hat\eta$ representation of $\eta$. For any $x \in \Z^2$, denote by $\{e_x, e_x^*\}$ the cross corresponding to $x$.
We first show that
\begin{equation}
\P^{\mathsf{SI},\xi}_{\{e_x, e_x^*\},\alpha,q}(\hat\eta_x = (1,1))  \ge p \qquad \text{for all }\xi \in \Omega^{\mathsf{SI}}\text{ and }x \in \cX . \label{dom} 
\end{equation}
Denote $e_x=\{u,v\}$ and $e_x^*=\{w,z\}$. Since no cross is closed in $\xi$, either $u \leftrightarrow v$ in $\xi \setminus \{e_x\}$ or $w \leftrightarrow z$ in $\xi \setminus \{e_x^*\}$. Indeed, $\xi^1$ contains the dual of $\xi^0$, so this is a standard consequence of planar duality. Thus, using the calculations in~\eqref{eq:cond_superimposed}, the probability that both $e_x$ and $e_x^*$ are open is either $\frac{\alpha}{2+\alpha}$ or $\frac{\alpha}{q+1+\alpha}$, depending on whether both connectivities exist or not.

The proof now proceeds by rather standard exploration arguments (see \cite[Lemma 9]{duminil2017continuity} for a version involving the random cluster model). We explore the sites in $\cX$ one-by-one starting from the boundary until we discover the outermost open circuits of $\rho$. We also want to ensure that properties (i) holds in each step. 
This can be done as follows. Suppose we have revealed a certain set $X$ of vertices. Then in the next step, we choose any unexplored vertex $x$ that is a $*$-neighbor of some explored vertex $y$ having $\rho_y=0$. If such a vertex exists, then we sample $U_x \sim $ Unif $ [0,1]$ independently of everything else and define
\[ \rho_x = \1_{\{U_x \le p\}} \qquad\text{and}\qquad \eta^\tau_x =
\begin{cases}
 (1,1) &\text{if } U_x \le \P^{\mathsf{SI},\eta^\tau}_{X,\alpha,q}(\hat\eta_x = (1,1)) \\
 (0,1) &\text{if } U_x \ge \P^{\mathsf{SI},\eta^\tau}_{X,\alpha,q}(\hat\eta_x \neq (0,1)) \\
 (1,0) &\text{otherwise}
\end{cases} .\]
Once no such vertex exists, we have discovered the outermost circuits of $\rho$. The above computation shows that (i) holds outside these circuits. We now sample the vertices inside these circuits, all at once. Using \cref{lem:dmp} we can ensure that (ii) holds inside these circuits (and that (i) holds as well).
\end{proof}

\begin{proof}[Proof of \cref{thm:uniqueness}]
To prove the first item, it suffices to show that, for any finite set $A \subset E$, the total-variation distance between the marginals of $\P^{\mathsf{SI},\tau}_{\Delta(\Lambda),\alpha,q}$ and $\P^{\mathsf{SI},11}_{\Delta(\Lambda),\alpha,q}$ on $A$ tends to 0.
By \cref{lem:bc-coupling}, this total-variation distance is at most the probability that a vertex of $\Z^2$ incident to $A$ is connected to $\partial ^\dagger \Lambda$ by a $*$-path which is closed in $\rho$, where $\rho$ is a Bernoulli site percolation with parameter $p=\frac{\alpha}{\max\{2,q+1\}+\alpha}$. Note that the choice of $\alpha$ ensures that $p > p_c$, which implies that the above probability tends to 0 as $\Lambda \uparrow (\Z^2)^*$. This is standard and follows from similar arguments for Bernoulli bond percolation. Indeed, we need to show that $1-p_c^*\le p_c$. If not, pick $p' \in (p_c,1-p_c^*)$ and consider Bernoulli site percolation with parameter $p'$. Using \cite[Proposition 2.1]{duminil2017lectures}, we know that the probability of a left-right open crossing of a square tends to 1 as the size of the square becomes large (the proof is written for bond percolation, but the same proof works for site percolation with any type of connectivity). On the other hand, by the same proposition, the probability of a top-bottom closed *-crossing also tends to 1. By duality, both crossings cannot simultaneously occur which leads to a contradiction. We conclude that $1-p_c^*\le p_c$. The fact that the unique limiting measure $\P^{\mathsf{SI}}_{\alpha,q}$ is translation-invariant now follows from the convergence. This completes the proof of the first item.

The second item follows directly from \cref{lem:bc-coupling}.
The almost sure existence of primal and dual infinite clusters is an immediate consequence of this domination and \cref{L:top}. The uniqueness of the primal (and dual) infinite cluster can be seen as a consequence of a standard Burton--Keane type argument or from the fact that supercritical Bernoulli percolation has an open circuit surrounding any two given sites.
\end{proof}

\subsection{Infinite-volume coupling} \label{sec:inf_coupling}
Here we prove an infinite-volume version of \cref{P:ES_coupling}. We now fix $q = 2$ and $\alpha > \frac{3p_c}{1-p_c}$, where we recall that $p_c$ is the critical value for Bernoulli site percolation on $\Z^2$. Let $\P^{\mathsf{SI}}_\alpha$ be the measure from the first item of \cref{thm:uniqueness} and recall from the second item that there is $\P^{\mathsf{SI}}_\alpha$-almost surely a unique infinite cluster in $\eta^0$ and a unique infinite cluster in $\eta^1$. The coupling we now describe will be between this unique Gibbs measure for the superimposed model and a Gibbs measure for the six-vertex spin configuration (which we will show exists via an infinite-volume limit).
Recall the measure $\bP^{ij}_{\Lambda,\alpha}$ from~\eqref{eq:ES_coupling}.

\begin{prop}\label{P:ES_coupling_infinite}
Let $\alpha > \frac{3p_c}{1-p_c}$ and set $c=2+\alpha$. Let $i,j \in \{+,-\}$ and let $\Lambda_n$ be a sequence of diamond domains such that $\Lambda_n \uparrow (\Z^2)^*$.
Then $\bP^{ij}_{\Lambda_n,\alpha}$ converges as $\Lambda_n \uparrow (\Z^2)^*$ to a limiting measure~$\mu$ on $\{+,-\}^{(\Z^2)^*} \times \Omega^{\mathsf{SI}}$.
The first marginal of $\mu$ is a Gibbs measure $\P^{\textsf{spin},ij}_c$ for the spin representation of the six-vertex model, and the second marginal of $\mu$ is the superimposed measure $\P^{\mathsf{SI}}_\alpha$. Moreover, if $(\sigma,\eta)$ is sampled from $\mu$, then
\begin{itemize}
 \item Given $\sigma,$ $\eta$ can be sampled by first putting in the unique compatible edge at each non-saddle point, and then, independently for each saddle point, assigning one of the three values $(1,0),(1,1),(0,1)$ for $\hat\eta$ with probabilities $\frac1{2+\alpha}, \frac\alpha{2+\alpha},\frac1{2+\alpha}$, respectively.
 \item Given $\eta$, $\sigma$ can be sampled by independently choosing a uniform sign for each finite cluster. The unique infinite cluster in $\eta^0$ receives spin $j$ and the unique infinite cluster in $\eta^1$ receives spin $i$.
\end{itemize}
Moreover, the convergence holds in the following stronger sense. Let  $(\sigma_n, \eta_n)  \sim \bP^{ij}_{\Lambda_n,\alpha}$. Then $$(\sigma_n,\eta_n, \{1_{v \leftrightarrow \infty}\}_v,\{1_{x\leftrightarrow y}\}_{x,y}) \to (\sigma, \eta, \{1_{v \leftrightarrow \infty}\}_v, \{1_{x\leftrightarrow y} \}_{x,y})$$
in distribution as $n \to \infty$. Here the connectivities on the left-hand side are in $\eta_n$ (in which case, $v\leftrightarrow \infty$ simply means that $v$ is connected to the boundary of $\Lambda_n$) and on the right-hand side in $\eta$.
\end{prop}

The reader may be wondering whether \cref{P:ES_coupling_infinite} is a straightforward adaptation of the proof of the analogous statement for the Potts and random-cluster model (e.g. from \cite[Theorem~4.91]{grimmett2006random}). However, this is not the case, since the wired-wired boundary condition for the superimposed model (which is the relevant finite-volume boundary condition here) does not induce the largest measure, as the wired boundary conditions do for the random-cluster model. This is the reason we need to prove the stronger statement (the moreover part of \cref{P:ES_coupling_infinite}), from which everything else follows. Indeed, this shows that the infinite cluster in the limit comes only from the boundary clusters and not from large clusters which do not touch the boundary.

\begin{corollary}\label{cor:spin-correlations}
Let $\alpha > \frac{3p_c}{1-p_c}$ and set $c=2+\alpha$, where $p_c$ is the critical value for Bernoulli site percolation on $\Z^2$. For any $i,j \in \{+,-\}$ and adjacent $u \in \L$ and $v \in \L^*$,
\begin{align*}
\P^{\textsf{spin},ij}_c(\sigma_u=j)=\P^{\textsf{spin},ij}_c(\sigma_v=i) &= \tfrac12 \big[1 + \P^{\mathsf{SI}}_\alpha(v \leftrightarrow \infty) \big], \\
\P^{\textsf{spin},ij}_c(\sigma_u \sigma_v = ij) &=
 \tfrac12 \big[1 + \P^{\mathsf{SI}}_\alpha(u \leftrightarrow \infty,~v \leftrightarrow \infty) \big].
\end{align*}
In particular, the four measures $\{\P^{\textsf{spin},ij}_c: i,j \in \{+,-\}\}$ are all different.
\end{corollary}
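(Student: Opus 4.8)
The plan is to obtain all three identities by conditioning on $\eta$ in the infinite-volume coupling $\mu$ of \cref{P:ES_coupling_infinite} and using the explicit conditional law of $\sigma$ given $\eta$ described there, together with the fact that the first marginal of $\mu$ is $\P^{\textsf{spin},ij}_c$ and the second is $\P^{\mathsf{SI}}_\alpha$. Fix adjacent $u \in \L$ and $v \in \L^*$ and let $(\sigma,\eta)$ be sampled from $\mu$. The cluster of $u$ is a cluster of $\eta^0$ while the cluster of $v$ is a cluster of $\eta^1$, so these are distinct clusters; hence, conditionally on $\eta$, the spins $\sigma_u$ and $\sigma_v$ are independent, each being a fixed boundary spin if the corresponding cluster is infinite ($\sigma_u=j$ for the infinite cluster of $\eta^0$, $\sigma_v=i$ for that of $\eta^1$) and uniform in $\{+,-\}$ if it is finite. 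Thus $\P_\mu(\sigma_u = j \mid \eta) = \1_{\{u \leftrightarrow \infty\}} + \tfrac12\1_{\{u \not\leftrightarrow \infty\}}$, where $u \leftrightarrow \infty$ refers to $\eta^0$, and likewise for $v$ in $\eta^1$. Taking expectations under $\P^{\mathsf{SI}}_\alpha$ gives $\P^{\textsf{spin},ij}_c(\sigma_u = j) = \tfrac12\big[1 + \P^{\mathsf{SI}}_\alpha(u \leftrightarrow \infty \text{ in } \eta^0)\big]$ and $\P^{\textsf{spin},ij}_c(\sigma_v = i) = \tfrac12\big[1 + \P^{\mathsf{SI}}_\alpha(v \leftrightarrow \infty \text{ in } \eta^1)\big]$.

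For the second display, write $\{\sigma_u\sigma_v = ij\} = \{\sigma_u = j,\ \sigma_v = i\} \cup \{\sigma_u = -j,\ \sigma_v = -i\}$ and condition on $\eta$: if both $u$ and $v$ lie in their respective infinite clusters, the event occurs with conditional probability $1$; in every other case at least one of the two clusters is finite, and the conditional independence and uniformity make the conditional probability equal to $\tfrac12$ in each case. Hence $\P_\mu(\sigma_u\sigma_v = ij \mid \eta) = \tfrac12\big(1 + \1_{\{u \leftrightarrow \infty\text{ in }\eta^0,\ v \leftrightarrow \infty\text{ in }\eta^1\}}\big)$, and taking expectations yields the stated formula. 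It then remains to identify the two connection probabilities $\P^{\mathsf{SI}}_\alpha(u\leftrightarrow\infty\text{ in }\eta^0)$ and $\P^{\mathsf{SI}}_\alpha(v\leftrightarrow\infty\text{ in }\eta^1)$ appearing in the first display, for which I would invoke the primal--dual symmetry of the superimposed model: by \cref{thm:uniqueness}, $\P^{\mathsf{SI}}_\alpha$ is invariant under all translations of $\Z^2$, and an odd translation carries $E(\L)$ onto $E(\L^*)$ while respecting the no-closed-cross constraint and preserving cluster counts, so it interchanges the roles of $\eta^0$ and $\eta^1$; combined with (even) translation invariance this gives $\P^{\mathsf{SI}}_\alpha(u\leftrightarrow\infty\text{ in }\eta^0) = \P^{\mathsf{SI}}_\alpha(v\leftrightarrow\infty\text{ in }\eta^1)$ for all $u \in \L$, $v \in \L^*$. (Alternatively, the same equality follows from the $\pi/2$-rotation symmetry of the F-model, which swaps the two sublattices and sends $\P^{\textsf{spin},ij}_c$ to $\P^{\textsf{spin},ji}_c$.)

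For the final assertion, set $\theta := \P^{\mathsf{SI}}_\alpha(v \leftrightarrow \infty)$. Since $\eta^1$ has $\P^{\mathsf{SI}}_\alpha$-a.s.\ a unique infinite cluster by \cref{thm:uniqueness}, translation invariance forces $\theta > 0$ (otherwise no vertex of $\L^*$ could lie in that cluster). Consequently $\P^{\textsf{spin},ij}_c(\sigma_u = j) = \tfrac12(1+\theta) > \tfrac12 > \tfrac12(1-\theta) = \P^{\textsf{spin},ij}_c(\sigma_u = -j)$, so $j$ is recovered from the law of $\sigma_u$ as its strictly-more-likely value, and symmetrically $i$ is recovered from the law of $\sigma_v$; hence the pair $(i,j)$ is a function of the measure, and the four measures are pairwise distinct. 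I expect the only genuine subtlety to lie in the bookkeeping for the primal--dual symmetry used to identify the two connection probabilities; everything else is a direct computation from the conditional description of the coupling.
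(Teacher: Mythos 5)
Your proposal is correct and follows essentially the same route as the paper: condition on $\eta$ in the infinite-volume coupling of \cref{P:ES_coupling_infinite}, read off the conditional law of $\sigma_u,\sigma_v$ according to whether the relevant clusters are infinite, and take expectations. The only addition is that you spell out the odd-translation symmetry giving $\P^{\mathsf{SI}}_\alpha(u\leftrightarrow\infty)=\P^{\mathsf{SI}}_\alpha(v\leftrightarrow\infty)$ and the positivity of this probability, both of which the paper uses implicitly.
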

\begin{proof}
Let $\sigma \sim \P^{\textsf{spin},ij}_c$ and $\eta \sim \P^{\mathsf{SI}}_\alpha$ be coupled as in \cref{P:ES_coupling_infinite}. Then $\P(\sigma_v=i \mid \eta)$ is 1 if $v \leftrightarrow \infty$ and is $\frac12$ otherwise, and similarly for $u$. Since $\P(u \leftrightarrow \infty)=\P(v \leftrightarrow \infty)>0$, this yields the first statement and shows that the four measures are distinct. Finally, $\P(\sigma_u \sigma_v = ij \mid \eta)$ is 1 if $u \leftrightarrow \infty$ and $v \leftrightarrow \infty$ and is $\frac12$ otherwise. This yields the second statement.
\end{proof}

\begin{proof}[Proof of \cref{P:ES_coupling_infinite}]
We drop the subscript $n$ from the notation of $\Lambda_n$ for clarity. We also denote $\Delta := \Delta(\Lambda)$ and $E := E(\L) \cup E(\L^*)$.
Recall from \cref{P:ES_coupling} that the second marginal of $\bP^{ij}_{\Lambda,\alpha}$ is $\P^{\mathsf{SI},11}_{\Delta,\alpha,2}$. By \cref{thm:uniqueness}, this marginal converges to $\P^{\mathsf{SI}}_\alpha$ as $n \to \infty$. By the description of the conditional measures in \cref{P:ES_coupling}, in order to obtain the proposition, it suffices to show that for any finite set $A \subset (\Z^2)^*$, the joint distribution of $(\1_{\{x \leftrightarrow y\}})_{x,y \in A}$ and $(\1_{\{x \leftrightarrow \infty \}})_{x \in A}$ under $\P^{\mathsf{SI},11}_{\Delta,\alpha,2}$ converges to their joint distribution under $\P^{\mathsf{SI}}_\alpha$ as $\Delta \uparrow E$.  Recall that if $\eta$ is sampled from $\P^{\mathsf{SI},11}_{\Delta,\alpha,2}$, the event $\{x \leftrightarrow \infty \}$ is the same as the event that $x$ is connected to a boundary vertex of $\Lambda$.

For $x,y \in A$ and $m \ge 1$, let $\cE_{x,y,m}$ denote the event that $x$ and $y$ are connected in $\eta$ within a ball of radius $m$ around the origin, and let $\cE_{x,\infty,m}$ denote the event that $x$ is connected in $\eta$ to a vertex at distance $m$ from $x$. Observe that, since these are cylinder events, the distribution of $(\1_{\cE_{x,y,m}})_{x \in A, y \in A \cup \{\infty\}}$ under $\P^{\mathsf{SI},11}_{\Delta,\alpha,2}$ converges to its distribution under $\P^{\mathsf{SI}}_\alpha$ as $\Delta \uparrow E$.
Note also that $\cE_{x,y,m} \subset \{x \leftrightarrow y\}$ and $\cE_{x,\infty,m} \supset \{x \leftrightarrow \infty\}$ for any $x,y\in A$. Thus, for any $x,y \in A$,
\begin{align*}
&\1_{\{x \leftrightarrow y\}} \neq \1_{\cE_{x,y,m}} &\implies\qquad &\{x \leftrightarrow y\} \setminus \cE_{x,y,m} \text{ occurs} ,\\
&\1_{\{x \leftrightarrow \infty \}} \neq \1_{\cE_{x,\infty,m}} &\implies\qquad &\cE_{x,\infty,m} \setminus \{x \leftrightarrow \infty\} \text{ occurs} .
\end{align*}
Therefore, since this is true for any $m$, it suffices to show that, for any $x,y \in A$,
\begin{align*}
&\lim_{m \to \infty} \lim_{\Delta \uparrow E} \P^{\mathsf{SI},11}_{\Delta,\alpha,2}(\{x \leftrightarrow y\} \setminus \cE_{x,y,m}) = 0 &\quad\text{and}\qquad &\lim_{m \to \infty} \P^{\mathsf{SI}}_\alpha(\{x \leftrightarrow y\} \setminus \cE_{x,y,m}) = 0 ,\\
&\lim_{m \to \infty} \lim_{\Delta \uparrow E} \P^{\mathsf{SI},11}_{\Delta,\alpha,2}(\cE_{x,\infty,m} \setminus \{x \leftrightarrow \infty\}) = 0 &\quad\text{and}\qquad &\lim_{m \to \infty} \P^{\mathsf{SI}}_\alpha(\cE_{x,\infty,m} \setminus \{x \leftrightarrow \infty\}) = 0 .
\end{align*}
The two right-hand statements are immediate since $\bigcup_m \cE_{x,y,m} = \{x \leftrightarrow y\}$ and $\bigcap_m \cE_{x,\infty,m} = \{x \leftrightarrow \infty\}$.
We now turn to the two left-hand statements. The first easily follows from the fact that $\P^{\mathsf{SI}}_\alpha$-almost surely there is only one infinite cluster in each of $\eta^0$ and $\eta^1$. For the second statement, we use the coupling from \cref{lem:bc-coupling} between a sample $\eta$ from $\P^{\mathsf{SI},11}_{\Delta,\alpha,2}$ and a Bernoulli percolation $\rho$ on $\Z^2$. Suppose that $\eta \in \cE_{x,\infty,m} \setminus \{x \leftrightarrow \infty\}$ and let $\cC$ be the cluster of $x$ in $\eta$. Thus, $\cC$ is finite and has diameter at least $m$. Suppose without loss of generality that $x \in \L$. Using \cite[Proposition 11.2]{Grimmett_percolation}, we see that this $\cC$ must be blocked by a simple circuit $\Gamma$ in $\L^*$, which clearly has diameter at least $m$.
Thus, the edges $E(\Gamma)$ of $\Gamma$ (which lie in $E(\L^*)$) are all open in $\eta^1$, while all their dual edges are closed in $\eta^0$.
By property~(i) of the coupling, we have that $\eta^0_e \ge \rho_{\{e,e^*\}}$ for all $e \in E(\Gamma)$. Therefore, $\eta^0$ stochastically dominates a Bernoulli edge percolation (on $\L$) of parameter $p>p_c$, and hence, $(\eta^0)^*$ is stochastically dominated by a Bernoulli edge percolation $\rho'$ (on $\L^*$) of parameter $1-p<1-p_c$.
We conclude that $\lim_{\Delta \uparrow E} \P^{\mathsf{SI},11}_{\Delta,\alpha,2}(\cE_{x,\infty,m} \setminus \{x \leftrightarrow \infty\})$ is at most the probability that $\rho'$ contains an open circuit which surrounds $x$ and has diameter at least $m$. Since $\rho'$ is subcritical (the critical site percolation parameter is $p_c>\frac12$, whereas the critical edge percolation parameter is $\frac12>1-p$), this probability decays exponentially to 0 as $m \to \infty$.
\end{proof}

We do not know how to extend \cref{P:ES_coupling_infinite} to all $\alpha>0$, but a similar argument yields a modified version in which the infinite clusters also receive random spins can be shown for all $\alpha>0$ (see \cref{P:ES_coupling_infinite2} below for a precise formulation). This is simply because the spin of every cluster, finite or infinite, is uniformly random (assuming uniqueness of infinite clusters) and connectivity to the boundary is irrelevant for the proof in this case (i.e., the event $\cE_{x,\infty,m}$ above is irrelevant). We do not provide a proof as we do not need this in what follows, and leave it to the reader to fill in the details.

\begin{prop}\label{P:ES_coupling_infinite2}
Let $\alpha > 0$ and set $c=2+\alpha$.
Let $(\Lambda_n)_n$ be a sequence of diamond domains increasing to~$(\Z^2)^*$.
Suppose that $\P^{\mathsf{SI},11}_{\Delta(\Lambda_n),\alpha,2}$ converges as $n \to \infty$ to some measure $\P^{\mathsf{SI},11}_\alpha$ with at most one infinite cluster in each sublattice.
Then $\frac14 \sum_{i,j \in \{+,-\}} \bP^{ij}_{\Lambda_n,\alpha}$ converges as $n \to \infty$ to a limiting measure $\mu$ on $\{+,-\}^{(\Z^2)^*} \times \Omega^{\mathsf{SI}}$.
The first marginal of $\mu$ is a Gibbs measure $\P^{\textsf{spin}}_c$ for the spin representation of the six-vertex model, and the second marginal of $\mu$ is the superimposed measure $\P^{\mathsf{SI},11}_\alpha$. Moreover, if $(\sigma,\eta)$ is sampled from $\mu$, then
\begin{itemize}
  \item Given $\sigma,$ $\eta$ can be sampled by first putting in the unique compatible edge at each non-saddle point, and then, independently for each saddle point, assigning one of the three values $(1,0),(1,1),(0,1)$ for $\hat\eta$ with probabilities $\frac1{2+\alpha}, \frac\alpha{2+\alpha},\frac1{2+\alpha}$, respectively.
 \item Given $\eta$, $\sigma$ can be sampled by independently choosing a uniform sign for each cluster (including any infinite clusters).
\end{itemize}
\end{prop}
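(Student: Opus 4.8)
The plan is to follow the proof of \cref{P:ES_coupling_infinite} closely, exploiting the simplification that now \emph{every} cluster --- finite or infinite --- receives an independent uniform spin, so that, conditionally on $\eta$, the law of $\sigma$ restricted to any finite set $A\subset(\Z^2)^*$ depends only on the partition $\pi(A,\eta)$ of $A$ induced by the clusters of $\eta$ (primal clusters for vertices of $\L$, dual clusters for vertices of $\L^*$), and is the same function of that partition in every finite volume and in the limit. Write $\mu_n := \frac14\sum_{i,j}\bP^{ij}_{\Lambda_n,\alpha}$. Averaging the conclusion of \cref{P:ES_coupling} over $i,j\in\{+,-\}$ shows that under $\mu_n$ the $\eta$-marginal is $\P^{\mathsf{SI},11}_{\Delta(\Lambda_n),\alpha,2}$, the $\sigma$-marginal is $\nu_n := \frac14\sum_{i,j}\P^{\textsf{spin},ij}_{\Lambda_n,c}$, the conditional law of $\eta$ given $\sigma$ is the ``saddle-point percolation'' kernel $K_n$ described in the statement (the same for all $i,j$, since the wired-wired boundary forces the boundary crosses open regardless), and the conditional law of $\sigma$ given $\eta$ assigns an independent uniform sign to \emph{every} cluster --- the averaging over $i,j$ turning the previously frozen boundary spins into uniform ones. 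I would then define the candidate limit $\mu$ to be the law of $(\sigma,\eta)$ obtained by sampling $\eta\sim\P^{\mathsf{SI},11}_\alpha$ and assigning an independent uniform sign to each cluster of $\eta$ (there are countably many finite clusters, plus at most one infinite primal and one infinite dual cluster by hypothesis), with $\sigma_v$ the sign of the cluster of $v$. By construction $\mu$ has $\eta$-marginal $\P^{\mathsf{SI},11}_\alpha$ and the asserted conditional law of $\sigma$ given $\eta$.

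The core of the argument is to show $\mu_n\to\mu$ weakly. Since the conditional law of $\sigma|_A$ given $\eta$ is the same function of $\pi(A,\eta)$ under every $\mu_n$ and under $\mu$, it suffices to prove that $\mu_n(\eta|_F=f,\ \pi(A,\eta)=P)\to\mu(\eta|_F=f,\ \pi(A,\eta)=P)$ for every finite edge set $F$, finite $A$, and partition $P$. The event $\{\eta|_F=f\}$ is a cylinder, but $\{\pi(A,\eta)=P\}$ is not, and this is the one genuinely non-trivial point --- the step I expect to be the main obstacle, since it is the only place where global, infinite-cluster information enters, and it is what forces the ``at most one infinite cluster'' hypothesis. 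Fix an increasing sequence of balls $B_m$ exhausting $(\Z^2)^*$ and approximate $\pi(A,\eta)$ by the cylinder-measurable coarsening $\pi_m(A,\eta)$, defined within each sublattice as the transitive closure of the relation ``$a\sim a'$ if $a$ and $a'$ are joined by a path of open edges inside $B_m$, or both $a$ and $a'$ are joined to $\partial B_m$ by a path of open edges inside $B_m$''. One checks that $\pi_m(A,\eta)$ is always a coarsening of $\pi(A,\eta)$ and that, for each fixed $\eta$, $\pi_m(A,\eta)=\pi(A,\eta)$ once $m$ is large, so that $\mu(\pi_m\ne\pi)\to0$; and one checks that $\{\pi_m\ne\pi\}$ is contained in the union, over pairs $a,a'\in A$ in the same sublattice, of $\{a\leftrightarrow\partial B_m\text{ in }B_m\}\cap\{a'\leftrightarrow\partial B_m\text{ in }B_m\}\cap\{a\not\leftrightarrow a'\}$. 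Bounding $\{a\not\leftrightarrow a'\}$ by the cylinder event $\{a\not\leftrightarrow a'\text{ in }B_{m'}\}$ for $m'\ge m$, using that the $\eta$-marginals of $\mu_n$ converge to $\P^{\mathsf{SI},11}_\alpha$, and then letting $m'\to\infty$, gives
\[ \limsup_n \mu_n(\pi_m\ne\pi) \ \le\ \sum_{a,a'}\mu\big(\{a\leftrightarrow\partial B_m\text{ in }B_m\}\cap\{a'\leftrightarrow\partial B_m\text{ in }B_m\}\cap\{a\not\leftrightarrow a'\}\big), \]
and as $m\to\infty$ the right-hand side tends to $\sum_{a,a'}\mu(a\leftrightarrow\infty,\ a'\leftrightarrow\infty,\ a\not\leftrightarrow a')=0$, where the vanishing is exactly the point at which the hypothesis of at most one infinite cluster per sublattice is used (two vertices of the same sublattice both connected to infinity lie in the same cluster). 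Combined with the trivial convergence $\mu_n(\eta|_F=f,\pi_m=P)\to\mu(\eta|_F=f,\pi_m=P)$ (a cylinder event, and the $\eta$-marginals converge) and with $\mu(\pi_m\ne\pi)\to0$, a three-$\epsilon$ argument yields $\mu_n(\eta|_F=f,\pi=P)\to\mu(\eta|_F=f,\pi=P)$, and hence $\mu_n\to\mu$.

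It remains to identify the remaining marginal and conditional law of $\mu$. Since $\mu_n\to\mu$, the $\sigma$-marginals converge: $\nu_n\to\P^{\textsf{spin}}_c$, the first marginal of $\mu$. Each $\nu_n$ is a mixture of the finite-volume Gibbs measures $\P^{\textsf{spin},ij}_{\Lambda_n,c}$ of \eqref{eq:spin_boundary}, which are Gibbs for the local, hard-constrained specification $\propto c^{\#\text{saddle}}\1_{\{\text{ice rule}\}}$; hence, by the standard fact that weak limits of finite-volume Gibbs measures for a quasilocal specification are Gibbs, $\P^{\textsf{spin}}_c$ is a Gibbs measure for the spin representation of the six-vertex model. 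Finally, for the conditional law of $\eta$ given $\sigma$: the kernels $K_n$ are local (at each saddle point of $\sigma$ an independent three-way coin with the stated probabilities, and a forced edge at each non-saddle point), so for each cylinder event $B$ one has $K_n(\cdot,B)=K(\cdot,B)$ for all large $n$, with $K(\cdot,B)$ a bounded continuous function of $\sigma$; since $\mu_n=\nu_n\otimes K_n$ with $\nu_n\to\P^{\textsf{spin}}_c$ and $\mu_n\to\mu$, integrating $\1_{\{\sigma|_A=s\}}K_n(\cdot,\{\eta|_F=f\})$ against $\nu_n$ and passing to the limit shows $\mu=\P^{\textsf{spin}}_c\otimes K$, which is the asserted conditional description (the ``given $\eta$'' conditional law having been built into the definition of $\mu$). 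Filling in the routine verifications for $\pi_m$ flagged above, and the standard Gibbsianness-of-weak-limits statement, completes the proof.
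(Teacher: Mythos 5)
Your argument is correct and is essentially the route the paper intends: the paper gives no written proof of \cref{P:ES_coupling_infinite2}, remarking only that it follows by the argument of \cref{P:ES_coupling_infinite} with the events $\cE_{x,\infty,m}$ becoming irrelevant because every cluster (finite or infinite) now receives an independent uniform spin, and your write-up is precisely that argument with the details filled in. Your cylinder approximation of the cluster-partition of a finite set, and your use of the at-most-one-infinite-cluster-per-sublattice hypothesis to kill the event that two vertices of the same sublattice are both connected to infinity yet lie in different clusters, correspond exactly to the paper's approximation of the connectivity indicators $\1_{\{x \leftrightarrow y\}}$ by the events $\cE_{x,y,m}$ and its appeal to uniqueness of the infinite cluster in the proof of \cref{P:ES_coupling_infinite}.
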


\subsection{Proof of Theorem~\ref{thm:h_grad}}
\label{sec:h_grad}

Throughout this section, we fix
\[ c>\frac{2+p_c}{1-p_c} \qquad\text{and}\qquad \alpha = c-2 ,\]
where $p_c$ is the critical value for Bernoulli site percolation on $\Z^2$. Recall that \cref{thm:uniqueness} yields a unique superimposed measure $\P^{\mathsf{SI}}_\alpha$, and that it is translation-invariant and $\P^{\mathsf{SI}}_\alpha$-almost surely there are unique primal and dual infinite clusters in $\eta$. Recall $\P^{\textsf{hf},m}_{\Lambda,c}$ from \eqref{eq:hf} and $\P^{\textsf{spin},ij}_c$ from \cref{P:ES_coupling_infinite}.

\begin{prop}\label{prop:h_limit}
Fix $m \in \Z$. Then $\P^{\textsf{hf},m}_{\Lambda,c}$ converges to a limit $\P^{\textsf{hf},m}_c$ as $\Lambda \uparrow (\Z^2)^*$ along diamond domains.
\end{prop}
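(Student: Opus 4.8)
The plan is to reduce \cref{prop:h_limit} to the convergence of the spin representation of the six-vertex model, which is already in hand via \cref{P:ES_coupling_infinite}, the only additional ingredient being the convergence of a single ``anchoring'' height value. First I would recall from \cref{sec:si-spin-coupling} that the projection $h\mapsto\sigma$ of \eqref{eq:height-to-spin} restricts to a bijection from $HF_\Lambda^m$ onto $\Omega^{\mathsf{spin},ij}_\Lambda$, where $(i,j)\in\{+,-\}^2$ is the pair determined by $m\bmod 4$ recorded just before \eqref{eq:spin_boundary}, and that it pushes $\P^{\textsf{hf},m}_{\Lambda,c}$ forward onto $\P^{\textsf{spin},ij}_{\Lambda,c}$ (the weights match since $\#\text{saddle}$ is unchanged). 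The inverse map is local up to one global constant: by \eqref{eq:spin-height-nabla} the gradient $\nabla h$, read as a $\{\pm1\}$-valued function on oriented edges, is a nearest-neighbour function of $\sigma$, and $h(v)\bmod4$ is an explicit function of $\sigma_v$ together with the sublattice of $v$; these two data determine $h$. Hence, fixing a reference vertex $o\in\L$, for any finite $W$ the restriction $h|_W$ is a deterministic function of $\sigma$ on a bounded neighbourhood of $W$ together with the single integer $h(o)$. Since the hypothesis $c>\frac{2+p_c}{1-p_c}$ is exactly $\alpha:=c-2>\frac{3p_c}{1-p_c}$, \cref{P:ES_coupling_infinite} applies and gives in particular that $\P^{\textsf{spin},ij}_{\Lambda_n,c}$ converges weakly to $\P^{\textsf{spin},ij}_c$ along diamond domains; so it remains to show that, jointly with this, the law of $h(o)$ under $\P^{\textsf{hf},m}_{\Lambda_n,c}$ converges.

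For the anchor I would work inside the coupling $\bP^{ij}_{\Lambda,\alpha}$ of \cref{P:ES_coupling} and first note that $h$ is constant on every primal cluster of $\eta^0$ and every dual cluster of $\eta^1$: if $\{u,u'\}\in E(\L)$ is open in $\eta^0$ then $\sigma_u=\sigma_{u'}$, so $h(u)\equiv h(u')\bmod4$, while $|h(u)-h(u')|\le2$ since $u,u'$ are at $(\Z^2)^*$-distance $2$, forcing $h(u)=h(u')$ (and symmetrically on $\L^*$). Under the wired-wired boundary condition the primal boundary cluster contains all inner boundary vertices of $\Lambda$, where $h$ equals the even boundary value $m_e\in\{m,m+1\}$; hence $h\equiv m_e$ there and $|h(o)-m_e|\le\dist\!\big(o,\text{ primal boundary cluster of }\eta^0\big)$. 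The remaining work is to control this distance uniformly in the domain: by \cref{lem:bc-coupling} with $q=2$, $X^\eta$ stochastically dominates, uniformly in the diamond domain, a Bernoulli site percolation of density $p=\frac{\alpha}{3+\alpha}>p_c$ on the internal vertices, and a supercritical circuit of open crosses around $o$ forces, via \cref{L:top}, all the primal faces it encloses into one cluster, which by uniqueness of the giant cluster is with high probability the primal boundary cluster. A routine argument then gives that $\dist(o,\text{primal boundary cluster of }\eta^0)$ has exponential tails uniformly in the diamond domain, and likewise $\dist(o,C_\infty^0)$ has exponential tails under $\P^{\mathsf{SI}}_\alpha$.

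To finish I would run a Cauchy argument using the convergence of the pair $(\sigma,\eta)$ on finite windows (from \cref{thm:uniqueness} and \cref{P:ES_coupling_infinite}): for $n<n'$, couple $\eta$ under $\P^{\mathsf{SI},11}_{\Delta(\Lambda_n),\alpha,2}$ and $\P^{\mathsf{SI},11}_{\Delta(\Lambda_{n'}),\alpha,2}$ so that they agree on a large ball $B_R(o)$ (possible since both are close to $\P^{\mathsf{SI}}_\alpha$ there), couple the spins of the enclosed clusters identically, and observe that on the event---of probability at least $1-\psi(R)$ with $\psi(R)\to0$ uniformly in $n,n'$ by the previous paragraph---that $o$ is joined inside $B_R(o)$ to the giant primal cluster, the value $h(o)$ equals $m_e$ plus the same bounded-range function of the agreed data for both domains, so that $h|_W$ coincides. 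Letting $R\to\infty$ shows the laws of $h|_W$ under $\P^{\textsf{hf},m}_{\Lambda_n,c}$ are Cauchy in total variation, hence convergent, and we call the limit $\P^{\textsf{hf},m}_c$. I expect the main obstacle to be exactly this last uniform supercriticality input---that there is a single giant primal cluster which reaches the boundary and swallows any open-cross circuit around $o$---since this is what pins $h(o)$ to its boundary value $m_e$ up to a tight error, and it is the only place, beyond \cref{P:ES_coupling_infinite} itself, where $c$ being large is genuinely used.
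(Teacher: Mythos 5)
Your proposal is correct and follows essentially the same route as the paper: the paper also deduces \cref{prop:h_limit} from the infinite-volume Edwards--Sokal-type coupling of \cref{P:ES_coupling_infinite}, using that $\sigma$ determines $h$ up to an additive integer in $4\Z$ and that the boundary (resp.\ infinite) clusters pin the height at $m$ and $m+1$; the paper simply leaves the remaining details as ``clear from the proof of \cref{P:ES_coupling_infinite}'', whereas you spell them out via the uniform supercritical domination and a Cauchy-type coupling. The only point to tighten is that the event ``$o$ is joined inside $B_R(o)$ to the boundary cluster'' is not measurable with respect to the window, so one should replace it by the local event ``connected to distance $R/2$'' together with the uniform estimate (already established in the proof of \cref{P:ES_coupling_infinite}) that local connectivity implies connection to the boundary cluster with high probability, applied simultaneously to both volumes with a common witness.
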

\begin{proof}
Set $i,j \in \{+,-\}$ so that $j=+$ if and only if $m=0,3$ mod 4, and $i=+$ if and only if $m=0,1$ mod 4.
By \cref{P:ES_coupling_infinite}, a sample $\sigma$ from $\P^{\textsf{spin},ij}_c$ can be obtained by sampling $\eta$ from $\P^{\mathsf{SI}}_\alpha$ and assigning random spins to finite clusters of $\eta$ and spins $i$ and $j$ to the two infinite clusters. In fact, by the `moreover' part of \cref{P:ES_coupling_infinite}, the limit of $\P^{\textsf{hf},m}_{\Lambda,c}$ exists and a sample from it can be obtained by assigning height $m$ and $m+1$ to the two infinite clusters (belonging to lattices of appropriate parity) and then using $\sigma$ to determine the height on all $(\Z^2)^*$ (recall that $\sigma$ lifts to a unique height function up to an additive integer in $4\Z$).
\end{proof}
Recall the definition of the diagonal gradient $\nabla_d h$ from \eqref{eq:grad_d_h}. 

\begin{prop}\label{prop:h_grad_ffiid}
Fix $m \in \Z$ and let $h$ be sampled from $\P^{\textsf{hf},m}_c$. Then $|\nabla_d h|$ is \ffiid.
\end{prop}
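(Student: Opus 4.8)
The plan is to identify $|\nabla_d h|$ with ($2$ times) the gradient of an independently-coloured cluster model built on the superimposed percolation $\eta\sim\P^{\mathsf{SI}}_\alpha$, and then to invoke \cref{thm:grad_general}. First I would recall from the proof of \cref{prop:h_limit} that, choosing $i,j\in\{+,-\}$ according to $m\bmod 4$, a sample $h\sim\P^{\textsf{hf},m}_c$ projects to a spin configuration $\sigma\sim\P^{\textsf{spin},ij}_c$, and that by \cref{P:ES_coupling_infinite} such a $\sigma$ is obtained from $\eta\sim\P^{\mathsf{SI}}_\alpha$ by assigning \iid\ uniform signs to the finite clusters of $\eta$ (both primal and dual) and the fixed signs $j$ and $i$ to the unique infinite clusters of $\eta^0$ and $\eta^1$, respectively. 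Since a diagonal edge changes $h$ by $0$ or $\pm2$, the projection \eqref{eq:height-to-spin} gives $|\nabla_d h|_{(u,v)}=2\cdot\1_{\{h(u)\neq h(v)\}}=2\cdot\1_{\{\sigma_u\neq\sigma_v\}}$ on every oriented edge of $\vec E(\L)\cup\vec E(\L^*)$, so that $|\nabla_d h|=2\nabla\sigma$ with $\nabla\sigma$ as in \cref{sec:general-gradient} for $q=2$. I would also observe that the law of $\nabla\sigma$ is unchanged if the infinite clusters are instead coloured $0$: $\nabla\sigma$ depends only on the relative colouring of the clusters, which is invariant under a global shift, and shifting all colours of a sublattice by $j$ turns that infinite cluster's colour into $0$ while leaving the finite clusters' colours \iid\ uniform (using that, given $\eta$, the primal and dual colourings are conditionally independent). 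Thus it suffices to prove that the $q=2$ construction of \cref{sec:general-gradient} applied to $\eta$ yields an \ffiid\ gradient, which by \cref{thm:grad_general} reduces to showing that $\eta$ is \ffiid\ and that $\eta$ has a unique infinite cluster in each sublattice.

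The uniqueness of the infinite cluster in each of $\eta^0,\eta^1$ is exactly the second item of \cref{thm:uniqueness}. To obtain that $\eta$ is \ffiid, I would use the monotonicity of the superimposed model (\cref{prop:FKG}): since $\preceq$ is the usual pointwise order after flipping the coordinates on $E(\L^*)$, conjugating by this flip turns $\P^{\mathsf{SI}}_\alpha$ into a genuinely monotone (FKG) random field, whose $\preceq$-extremal infinite-volume measures are $\P^{\mathsf{SI},10}_\alpha$ and $\P^{\mathsf{SI},01}_\alpha$; by \cref{thm:uniqueness} these coincide with $\P^{\mathsf{SI}}_\alpha$. The general finitary-coding theorem for monotone models with coinciding extremal measures from~\cite{harel2018finitary} then gives that the flipped field—and hence $\eta$ itself—is \ffiid.

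Finally I would feed these two facts into \cref{thm:grad_general}. The only wrinkle is that $\eta$ lives on the disconnected graph $\L\sqcup\L^*$ rather than on a connected transitive graph; but the cluster-tree construction of \cref{prop:tree} is entirely local to connected components (clusters never merge across sublattices), so one may run \cref{thm:grad_general} on $\eta^0$ over $\L$ and on $\eta^1$ over $\L^*$ with two independent \iid\ processes, combine the auxiliary randomness into a single \iid\ process $\xi$ on $(\Z^2)^*$ independent of $\eta$, and conclude that $\nabla\sigma$ is a finitary factor of $(\eta,\xi)$. Since $\eta$ is \ffiid, so is $\nabla\sigma$, and therefore so is $|\nabla_d h|=2\nabla\sigma$. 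Because the cluster-tree map and the colouring rule are the same construction on both sublattices, this coding commutes with the even automorphisms of $(\Z^2)^*$ and is intertwined by the odd translations swapping $\L$ and $\L^*$, so it is \ffiid\ for the full automorphism group.

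I expect the real work here to be not conceptual — all the hard analysis is already in \cref{thm:grad_general,thm:uniqueness,prop:FKG,P:ES_coupling_infinite} — but rather the careful matching of frameworks: (a) checking that the monotone-model coding result of~\cite{harel2018finitary} applies despite the hard ``no closed cross'' constraint and the order being reversed on $E(\L^*)$ (which the flip conjugation should resolve), and (b) handling the fact that $\eta$ carries two infinite clusters on two disjoint copies of $\Z^2$ rather than the single infinite cluster in the hypothesis of \cref{thm:grad_general}, which forces the sublattice-by-sublattice argument and the re-assembly into a single coding equivariant under the whole automorphism group.
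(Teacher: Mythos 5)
Your overall architecture is the same as the paper's: identify $|\nabla_d h|$ with $2\nabla_d\sigma$ for $\sigma\sim\P^{\textsf{spin},ij}_c$, obtain $\sigma$ from $\eta\sim\P^{\mathsf{SI}}_\alpha$ via \cref{P:ES_coupling_infinite}, show $\eta$ is \ffiid\ from the monotone-coding theorem of~\cite{harel2018finitary} together with \cref{prop:FKG} and \cref{thm:uniqueness}, and then apply \cref{thm:grad_general} separately on $\L$ and $\L^*$ (this per-sublattice application, with the same factor map reused on both lattices and independent auxiliary randomness, is exactly how the paper handles the ``two infinite clusters'' issue, so point (b) of your wrinkles is fine).

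The genuine gap is in point (a), i.e.\ in the step ``the flipped field---and hence $\eta$ itself---is \ffiid.'' The flip (equivalently, the identification $\eta\mapsto\hat\eta\in\{-1,0,1\}^{\Z^2}$) is only $(\Z^2)_{\text{even}}$-equivariant: an odd translation $T$ swaps $\L$ and $\L^*$ and hence reverses the order again, so $T\hat\eta=-\hat{T\eta}$. Consequently, applying~\cite{harel2018finitary} to the flipped/hatted field and conjugating back only produces a coding of $\eta$ that commutes with $(\Z^2)_{\text{even}}$, i.e.\ it shows $\eta$ is $(\Z^2)_{\text{even}}$-\ffiid, not \ffiid\ for the full automorphism group. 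This cannot be waved away: the distinction between $(\Z^2)_{\text{even}}$-\ffiid\ and \ffiid\ is precisely what is at stake in \cref{thm:h_grad} (its first part shows $\nabla_d h$ fails to be even $(\Z^2)_{\text{even}}$-\ffiid, and the whole point of passing to $|\nabla_d h|$ is to restore the full symmetry), and there is no general principle upgrading a finite-index-subgroup finitary coding to the full group. Your closing sentence about the cluster-tree map and colouring rule being symmetric on the two sublattices only addresses the equivariance of the second stage (from $(\eta,\xi)$ to $\nabla_d\sigma$); if the first stage (from \iid\ to $\eta$) is only even-equivariant, so is the composition. The paper closes this gap by opening the black box of~\cite[Theorem~7]{harel2018finitary}: the coding there is built from coupling-from-the-past for a monotone single-site dynamics run from the two extremal configurations, and its output does not depend on which extremal configuration is the minimal and which is the maximal one; hence it is insensitive to the order reversal caused by odd translations, and when applied directly to $\eta$ with the mixed order $\preceq$ it is equivariant under the full group. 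Some argument of this kind (or another genuinely symmetric construction of the coding for $\eta$) is needed to complete your proof.
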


\begin{proof}
For a spin configuration $\sigma \in \{+,-\}^{(\Z^2)^*}$, let $\nabla_d \sigma$ be the random field on the non-directed edges of $\L$ and $\L^*$ defined by $(\nabla_d \sigma)_{\{u,v\}} = \1_{\{\sigma_u \neq \sigma_v\}}$ for $\{u,v\} \in E(\L) \cup E(\L^*)$. 

 Note that if $\sigma$ is the spin configuration obtained from a height function $h$, then $2\nabla_d \sigma = |\nabla_d h|$ and $\sigma$ has law $\P^{\textsf{spin},ij}_c$ for suitable $i,j \in \{+,-\}$ (recall that pushing $\P^{\textsf{hf},m}_{\Lambda,c}$ forward via the projection from height functions to spin configurations yields $\P^{\textsf{spin},ij}_{\Lambda,c}$). Thus, to establish the theorem, it suffices to show that $\nabla_d \sigma$ is \ffiid\ when $\sigma$ is sampled from $\P^{\textsf{spin},ij}_c$.

Let $\eta$ be sampled from $\P^{\mathsf{SI}}_\alpha$.
We aim to apply~\cite[Theorem~2.1]{harel2018finitary}, which roughly says that a monotone model whose extremal measures coincide is \ffiid, to obtain that $\eta$ is \ffiid.
Indeed, the superimposed model is monotone (\cref{prop:FKG}) and $\P^{\mathsf{SI},01}_{\alpha,q}=\P^{\mathsf{SI},10}_{\alpha,q}$ (\cref{thm:uniqueness}).
However, our model is monotone with respect to a ``reversed'' partial order, whereas the result in~\cite{harel2018finitary} is stated for the usual pointwise order.
Thus, we may apply this result to $\hat\eta$, viewed as an element of $\{-1,0,1\}^{\Z^2}$, which is indeed monotone with respect to the usual order (recall the discussion in \cref{sec:monotone}), to obtain that $\hat\eta$ is \ffiid. Going back to $\eta$, this yields that $\eta$ is $(\Z^2)_{\text{even}}$-\ffiid\ (note that the $\hat\cdot$ operation, and hence also its inverse, is not translation-equivariant, but rather $(\Z^2)_{\text{even}}$-equivariant; more specifically, $T \hat\eta = - \hat{T \eta}$ for any odd translation $T$). We emphasize here that this argument proves that the pair $(\eta^0,\eta^1)$ is \emph{jointly} $(\Z^2)_{\text{even}}$-\ffiid.

To get the full invariance for $\eta$, we note that the proof of \cite[Theorem~2.1]{harel2018finitary} extends to the setting of a ``reversed'' partial order. Indeed, the coding constructed there uses coupling-from-the-past for a monotone single-site dynamics, and outputs the value of an edge once it identifies that the dynamics started from the two extremal configurations agree on the state of that edge. Since determining the latter does not depend on which of the two extremal configurations is minimal or maximal, we conclude that the coding is translation-equivariant. Thus, $\eta$ is \ffiid. 

Let $\sigma$ be sampled from $\P^{\textsf{spin},ij}_c$ and recall that our goal is to show that $\nabla_d \sigma$ is \ffiid. Let $\phi$ be a finitary coding from an \iid\ process $Y=(Y_e)_{e \in E(\L) \cup E(\L^*)}$ to $\P^{\mathsf{SI}}_\alpha$ and write $\eta = (\eta^0,\eta^1) = \phi(Y)$.
By \cref{P:ES_coupling_infinite}, $\sigma$ is obtained from $\eta$ by assigning spin $i$ and $j$ to the dual and primal infinite clusters, and assigning independent unbiased signs to the finite clusters.
Applying \cref{thm:grad_general} on the graph $\L$ with the percolation process $\eta^0$ and the spin configuration $\sigma|_{\L}$, we see that $(\nabla_d \sigma)|_{E(\L)}$ (which is the same as $\nabla (\sigma|_{\L})$ with the $\nabla$ used in the theorem) is a finitary factor of $(\eta^0,\xi^0)$, where $\xi^0$ is an \iid\ process on $\L$, independent of $\eta^0$. We may further assume that $\xi^0$ is independent of $\eta$. In other words, there exists an $\L$-equivariant function $\varphi$ such that $\varphi(\eta^0,\xi^0)$ equals $(\nabla_d \sigma)|_{E(\L)}$. Since $(\nabla_d \sigma)_{E(\L)}$ and $(\nabla_d \sigma)|_{E(\L^*)}$ have the same distribution (since $\eta$ is translation-invariant), the same function $\varphi$ also serves as a finitary coding for $(\nabla_d \sigma)|_{E(\L^*)}$ so that $\varphi(\eta^1,\xi^1)$ equals $(\nabla_d \sigma)|_{E(\L^*)}$, where $\xi^1$ has the same distribution as $\xi^0$, and is independent of $\eta$ and $\xi^0$. Putting this together, we see that $(\nabla_d \sigma)_e = \varphi(\eta^0,\xi^0)_e$ for all $e\in E(\L)$ and $(\nabla_d \sigma)_e = \varphi(\eta^1,\xi^1)_e$ for all $e \in E(\L^*)$. Recalling that $\eta = (\eta^0,\eta^1) = \phi(Y)$, we conclude that $\nabla_d \sigma$ is \ffiid.
\end{proof}

\begin{remark}\label{rem:c=2}
Consider the case $c=2$. It is shown in~\cite{glazmanpeled2019} that the gradient of $\P^{\textsf{hf},m}_{\Lambda,c}$ converges (that is, the six-vertex measures converge) and the limiting measure does not depend on $m$. It can be shown that this measure is \ffiid\ (unlike the situation for large $c$).
Indeed, using the fact that the random-cluster model with $q=4$ undergoes a continuous phase transition (established recently in \cite{duminil2017continuity}), \cite[Theorem~1.1]{harel2018finitary} implies that the critical random-cluster measure with $q=4$ is \ffiid.
Recall from \cref{rem:bkw} that, in this case, the superimposed model with $q=2$ coincides with the critical random-cluster model with $q=4$.
Thus, the superimposed model has no infinite clusters, and hence \cref{P:ES_coupling_infinite2} shows that the spin representation of the six-vertex model is \ffiid.
\end{remark}

The above establishes the second part of \cref{thm:h_grad} (as the claim about $|\Delta h|$ then follows easily from~\eqref{eq:spin-height-abs-delta}). For the first part of the theorem, we require the following lemma. Let $\Lambda_n$ denote the diamond domain of diameter $2n$ centered around the origin.

\begin{lemma}\label{lem:spin-agree}
Let $i,j,i',j' \in \{+,-\}$.
There exists a coupling between $\sigma \sim \P^{\textsf{spin},ij}_c$ and $\sigma' \sim \P^{\textsf{spin},i'j'}_c$ such that
\[ \P(\sigma = \sigma' \text{ on } \Lambda_n) \ge e^{-an} \qquad\text{for some }a>0\text{ and all }n \ge 1 .\]
\end{lemma}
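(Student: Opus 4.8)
The plan is to exploit the infinite-volume coupling of \cref{P:ES_coupling_infinite}. Recall that a sample $\sigma\sim\P^{\textsf{spin},ij}_c$ is produced from a sample $\eta\sim\P^{\mathsf{SI}}_\alpha$ by assigning an independent uniform sign to each finite cluster of $\eta$ (primal and dual), spin $j$ to the unique infinite cluster of $\eta^0$, and spin $i$ to the unique infinite cluster of $\eta^1$; crucially, the law of $\eta$ is $\P^{\mathsf{SI}}_\alpha$ no matter what the boundary spins are. I would couple $\sigma$ and $\sigma'$ by sampling a single $\eta\sim\P^{\mathsf{SI}}_\alpha$ and a single collection of independent uniform signs, and using them to build both configurations (with boundary spins $i,j$ for $\sigma$ and $i',j'$ for $\sigma'$). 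Then $\sigma$ and $\sigma'$ can differ only on the two infinite clusters of $\eta$. Hence it suffices to produce an event $E_n$, measurable with respect to $\eta$, with $\P^{\mathsf{SI}}_\alpha(E_n)\ge e^{-an}$, on which neither infinite cluster of $\eta$ meets $\Lambda_n$: on $E_n$ all clusters meeting $\Lambda_n$ are finite, so $\sigma|_{\Lambda_n}$ and $\sigma'|_{\Lambda_n}$ are both determined by $\eta$ together with the (shared) signs of the finitely many finite clusters touching $\Lambda_n$, and therefore coincide.

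For $E_n$ I would take the event that the annulus $\Lambda_{2n}\setminus\Lambda_n$ contains two disjoint circuits surrounding $\Lambda_n$: a circuit in $\L$ all of whose edges are open in $\eta^0$ and all of whose dual-partner edges are closed in $\eta^1$ (equivalently, the crosses along it have $\hat\eta=(1,0)$), and a circuit in $\L^*$ all of whose edges are open in $\eta^1$ and all of whose primal-partner edges are closed in $\eta^0$ (crosses having $\hat\eta=(0,1)$). By planar duality, the first circuit blocks every open dual path from $\Lambda_n$ to infinity, so the infinite cluster of $\eta^1$ cannot meet $\Lambda_n$; symmetrically the second circuit blocks every open primal path, so the infinite cluster of $\eta^0$ cannot meet $\Lambda_n$. (One can use both circuits in all cases; if, say, $j=j'$ only the $(1,0)$-circuit is actually needed, and dually.) Since the recipe of \cref{P:ES_coupling_infinite} is applied verbatim for both $\sigma$ and $\sigma'$, the marginals are correct, and on $E_n$ one gets $\sigma=\sigma'$ on $\Lambda_n$.

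It remains to bound $\P^{\mathsf{SI}}_\alpha(E_n)$ from below. Fix once and for all two concentric circuits in $\Z^2$ surrounding $\Lambda_n$ and contained in $\Lambda_{2n}\setminus\Lambda_n$ whose sets of crosses are disjoint (e.g.\ taken at two distinct radii), one destined to be the primal and one the dual circuit; each uses $O(n)$ crosses. Then $E_n$ is implied by the event that $\hat\eta$ takes the prescribed value ($(1,0)$ or $(0,1)$) on this fixed set $F$ of $O(n)$ crosses. The superimposed model has a finite-energy (insertion-tolerance) property: changing the value of $\hat\eta$ at a single cross alters the number of open crosses by at most one and each of the two cluster counts by at most one, so from the explicit weights in \eqref{eq:superimposed} the conditional probability that a given cross takes any prescribed value, given all other crosses, is bounded below by a constant $\delta=\delta(\alpha,q)>0$ --- uniformly over finite diamond domains and boundary conditions, and hence (the relevant events being cylindrical, and $\P^{\mathsf{SI},\tau}_{\Delta(\Lambda_N),\alpha,q}\to\P^{\mathsf{SI}}_\alpha$ by \cref{thm:uniqueness}) also under $\P^{\mathsf{SI}}_\alpha$. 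Revealing the crosses of $F$ one at a time gives $\P^{\mathsf{SI}}_\alpha(E_n)\ge\delta^{|F|}=e^{-an}$ for a suitable $a=a(\alpha)>0$, which completes the proof.

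The main obstacle is obtaining the bound of the correct order, $e^{-an}$ rather than $e^{-an^2}$: any event that pins down $\sigma|_{\Lambda_n}$ directly, or that forces an infinite cluster to avoid the two-dimensional region $\Lambda_n$, costs $e^{-an^2}$. The point is therefore to decouple $\Lambda_n$ from the boundary-at-infinity information using only one-dimensional separating circuits, and to check two things carefully: that blocking the infinite clusters genuinely removes all dependence of the law of $\sigma|_{\Lambda_n}$ on $(i,j)$ (so that the shared $\eta$ and shared finite-cluster signs really force agreement), and that the two circuits of opposite ``type'' can coexist with the no-closed-cross constraint (which is why the disjointness of their crosses is imposed).
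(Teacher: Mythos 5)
Your proposal is correct and follows essentially the same route as the paper: couple $\sigma$ and $\sigma'$ through a single sample $\eta\sim\P^{\mathsf{SI}}_\alpha$ with shared finite-cluster signs, reduce to the event that no vertex of $\Lambda_n$ lies in an infinite cluster of $\eta$, force this event by prescribing the cross-values along two circuits of length $O(n)$ in an annulus, and pay only $e^{-an}$ by finite energy of $\hat\eta$. Your explicit bookkeeping (open the circuit edges and close their dual partners, with the two circuits' cross sets disjoint) is the right way to block both infinite clusters --- indeed the paper's phrasing ``closing all edges in $\Gamma$ and $\Gamma'$ and opening all their dual edges'' has the open/closed roles reversed --- and your transfer of the uniform finite-volume finite-energy bound to $\P^{\mathsf{SI}}_\alpha$ via cylinder events is a slightly more careful rendering of the paper's one-line appeal to finite energy.
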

\begin{proof}
We use the coupling described in \cref{P:ES_coupling_infinite}, where we couple $\sigma$ and $\sigma'$ with the same sample $\eta \sim \P^{\mathsf{SI}}_\alpha$.
In this coupling, $\sigma$ and $\sigma'$ agree on the finite clusters of $\eta$. Thus, it suffices to show that $\P(H) \ge e^{-an}$, where $H$ is the event that no element of $\Lambda_n$ is in an infinite cluster of $\eta$. To see this, let $\Gamma$ be a simple circuit in $\L$ surrounding $\Lambda_n$ and contained in $\Lambda_{n+2}$, and let $\Gamma'$ be a simple circuit in $\L^*$ surrounding $\Lambda_{n+4}$ and contained in $\Lambda_{n+6}$. Closing all edges in $\Gamma$ and $\Gamma'$ and opening all their dual edges clearly ensures that $H$ holds. By finite energy (of $\hat\eta$), this operation has an exponential cost in $|\Gamma|+|\Gamma'|=O(n)$.
\end{proof}

Recall the definition of the Laplacian $\Delta h$ from~\eqref{eq:eq:lap_h}.

\begin{prop}\label{prop:h_grad_no_ffiid}
Fix $m \in \Z$ and let $h$ be sampled from $\P^{\textsf{hf},m}_c$. Then $\Delta h$ is not $(\Z^2)_{\text{even}}$-\ffiid.
\end{prop}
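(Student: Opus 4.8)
The plan is to argue by contradiction, following the scheme of~\cite{spinka2018finitarymrf} for finite-range Markov random fields possessing more than one Gibbs measure, with \cref{lem:spin-agree} playing the role of the finite-energy property (which fails here because of the hard ice-rule constraint). It suffices to treat $m=0$, in which case $\P^{\textsf{hf},m}_c$ pushes forward through the spin representation to $\P^{\textsf{spin},++}_c$; the other residues of $m$ mod $4$ are handled identically, using that the law of $\Delta h$ under $\P^{\textsf{hf},m}_c$ equals that of $\pm\Delta h$ under $\P^{\textsf{hf},0}_c$ (the sign depending on the parity of $m$) and that $\Delta h\mapsto-\Delta h$ is a $(\Z^2)_{\text{even}}$-equivariant bijection. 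The first step is to reduce to the six-vertex configuration by showing that $\nabla h$ is a finitary factor of $\Delta h$. Indeed, recalling from~\eqref{eq:eq:lap_h} that $\Delta h$ is a multiple of the discrete Laplacian of $h$, a circuit $\Gamma$ in $(\Z^2)^*$ enclosing a given vertex and along which $\Delta h$ exhibits the flat pattern --- the value $+1$ on $\Gamma\cap\L$ and $-1$ on $\Gamma\cap\L^*$ --- pins down $h|_\Gamma$ up to a single additive constant, and solving the discrete Dirichlet problem inside $\Gamma$ with the (known) values of $\Delta h$ there then determines $\nabla h$, and hence $h$ up to a constant, on the whole region enclosed by $\Gamma$. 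Such a circuit is located by an exploration of $\Delta h$ from the given vertex, and for $c>\tfrac{2+p_c}{1-p_c}$ one exists at finite distance almost surely: by \cref{thm:uniqueness} and the coupling of \cref{P:ES_coupling_infinite}, a ``thick'' circuit of constant spin surrounds every vertex almost surely (a variant of the estimate in the proof of \cref{lem:spin-agree}). It therefore suffices to prove that $\nabla h$, sampled from the push-forward $\nu$ of $\P^{\textsf{spin},++}_c$, is not $(\Z^2)_{\text{even}}$-\ffiid.

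The second step is to produce two distinct Gibbs states for the six-vertex model which are ``close''. The six-vertex configuration is a Gibbs field for a finite-range specification --- the ice rule with the weight $c^{\#\text{saddle}}$ --- and reversing all arrows, $\nabla h\mapsto-\nabla h$, is a $(\Z^2)_{\text{even}}$-equivariant involution of the configuration space which preserves the specification and, by~\eqref{eq:spin-height-nabla}, corresponds to a global flip of $\sigma|_\L$; it maps $\nu$ to $\nu':=(-1)_*\nu$, the push-forward of $\P^{\textsf{spin},+-}_c$. These measures differ: for an oriented edge $(u,v)$ with $v\in\L$, \eqref{eq:spin-height-nabla} gives $\E_\nu[(\nabla h)_{(u,v)}]=2\,\P^{\textsf{spin},++}_c(\sigma_u=\sigma_v)-1$, which by \cref{cor:spin-correlations} equals $\P^{\mathsf{SI}}_\alpha(u\leftrightarrow\infty,\,v\leftrightarrow\infty)$, a quantity which is strictly positive by \cref{thm:uniqueness}, whereas $\E_{\nu'}[(\nabla h)_{(u,v)}]$ is its negative. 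On the other hand, by \cref{lem:spin-agree} the measures $\P^{\textsf{spin},++}_c$ and $\P^{\textsf{spin},+-}_c$ admit a coupling of $\sigma$ and $\sigma'$ with $\P(\sigma=\sigma'\text{ on }\Lambda_n)\ge e^{-an}$; since $\nabla h$ is a local function of $\sigma$, this descends to a coupling of $\nu$ and $\nu'$ agreeing on $\Lambda_n$ with probability at least $e^{-a'n}$ for some $a'>0$. This exponential lower bound is the substitute for finite energy: although the hard constraint forbids modifying a bounded patch of $\nabla h$ at bounded cost, it still allows $\nu$ and $\nu'$ to be coupled to agree on arbitrarily large boxes at an only exponentially small cost.

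With these two ingredients in hand --- a finite-range Markov random field carrying two distinct $(\Z^2)_{\text{even}}$-invariant Gibbs measures $\nu,\nu'$ related by an equivariant symmetry and coupled to agree on $\Lambda_n$ with probability $\ge e^{-a'n}$ --- I would then invoke the argument of~\cite{spinka2018finitarymrf} to conclude that $\nu$ is not $(\Z^2)_{\text{even}}$-\ffiid, which together with the reduction above yields the proposition. In outline, were $\nabla h=\varphi(Y)$ such a finitary coding with coding radius having vanishing tail, one uses the Gibbs property together with the exponential agreement bound to transport, with positive probability, a $\nu$-typical boundary configuration on a large annulus into a $\nu'$-typical one while leaving a fixed inner box unchanged; the finitary map $\varphi$, which reconstructs the inner box from a bounded window of $Y$, cannot tell the two scenarios apart, forcing $\nu$ and $\nu'$ to induce the same law on that box and contradicting the second step.

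The step I expect to be the main obstacle is this last one: making the Markov-random-field argument work in the presence of a hard constraint. Because one cannot flip a bounded region of the configuration at bounded cost, the usual finite-energy input is unavailable and must be replaced by the shielding coupling of \cref{lem:spin-agree}, whose proof itself rests on the uniqueness of the infinite clusters in the superimposed model (\cref{thm:uniqueness}); carefully pushing the coding-radius estimate through with this merely exponentially (rather than uniformly) small agreement cost is the delicate point. A secondary issue is that $\Delta h$ is not itself a Markov field, which is why the argument is first transferred to $\nabla h$ via the finitary reconstruction in the first step.
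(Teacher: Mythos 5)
You have gathered the right ingredients --- \cref{lem:spin-agree} as the substitute for finite energy, \cref{cor:spin-correlations} to produce two symmetry-related states whose one-site observables have means of opposite sign --- but the step you yourself flag as the main obstacle is exactly where the proposal has a genuine gap, and the mechanism you sketch for it does not work. A finitary coding $\varphi$ takes the \iid\ field $Y$ as input, not boundary data of the target process, so ``transporting a $\nu$-typical annulus configuration into a $\nu'$-typical one and noting that $\varphi$ cannot tell the two scenarios apart'' is not a meaningful operation; moreover its purported conclusion (that $\nu$ and $\nu'$ induce the same law on a fixed box) is simply false and would not be the contradiction anyway. The argument of~\cite{spinka2018finitarymrf} cannot be invoked as a black box here precisely because of the hard constraint and the non-Markovian observable; what actually closes the argument in the paper is quantitative: any \ffiid\ field satisfies the ergodic theorem at an exponential (volume-order) rate~\cite{bosco2010exponential}. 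The paper couples $h'=2m-h$, so $\Delta h'=-\Delta h$ and both would be $(\Z^2)_{\text{even}}$-\ffiid; since $\E(\Delta h_v)>0$ and $\E(\Delta h'_v)<0$ by \cref{cor:spin-correlations}, the empirical averages over $\Lambda_n\cap\L$ then satisfy $\P(Z_n\le 0)+\P(Z'_n\ge 0)\le Be^{-bn^2}$, which forces \emph{any} coupling to have $\P(\sigma|_{\Lambda_{n+1}}=\sigma'|_{\Lambda_{n+1}})\le Be^{-bn^2}$, contradicting the surface-order bound $e^{-an}$ of \cref{lem:spin-agree}. This volume-versus-surface mismatch is the heart of the proof and is absent from your outline; with only the exponential agreement bound and a qualitative ``the coding can't distinguish'' heuristic, no contradiction is reached.

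A second gap is your first step, the claim that $\nabla h$ is a finitary factor of $\Delta h$. It is both insufficiently justified and unnecessary. Unnecessary, because the large-deviation argument above applies directly to $\Delta h$: the only property used is that it is (assumed) \ffiid, so no Markov or Gibbs structure of the observable is needed, and the paper never reconstructs $\nabla h$ from $\Delta h$ (the implications in \cref{thm:h_grad} go the easy way, $\Delta h$ being a local function of $\nabla h$). Insufficiently justified, because the almost-sure existence, around every vertex, of a circuit on which $\Delta h$ exhibits the flat pattern ($+1$ on $\L$, $-1$ on $\L^*$) does not follow from ``a variant of \cref{lem:spin-agree}'' --- that lemma provides an exponentially small \emph{lower} bound for an event in a coupling, not an almost-sure circuit statement --- nor immediately from \cref{thm:uniqueness}: a bare circuit of open crosses only pins the heights on the faces incident to it, while the flat pattern at a vertex constrains all four of its neighbours, so you would need ``thick'' circuits, and the corresponding thinned process need not percolate for all $c>\frac{2+p_c}{1-p_c}$ merely because the open-cross process dominates a supercritical Bernoulli percolation. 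If you want to keep your route through $\nabla h$, you would still need the volume-order concentration input to finish, at which point the detour buys nothing.
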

\begin{proof}
Assume without loss of generality that $m$ is even.
Let $h'$ be sampled from $\P^{\textsf{hf},m-1}_c$.
Since $h$ and $h'$ can be coupled so that $h'=2m-h$ almost surely, it suffices to show that it cannot be that $\Delta h$ and $\Delta h'$ are both $(\Z^2)_{\text{even}}$-\ffiid.
Thus, we assume towards a contradiction that both $\Delta h$ and $\Delta h'$ are $(\Z^2)_{\text{even}}$-\ffiid.

Let $\sigma$ and $\sigma'$ denote the spin configurations obtained from $h$ and $h'$, respectively.
Recall that for $v \in \L$, $\Delta h_v = \frac14 \sum_{u \sim v} (\1_{\{\sigma_u=\sigma_v\}} - \1_{\{\sigma_u \neq \sigma_v\}})$ from~\eqref{eq:spin-height-delta}. Since $m$ is even, $\sigma$ has distribution $\P^{\textsf{spin},++}_c$ or $\P^{\textsf{spin},--}_c$, so that \cref{cor:spin-correlations} implies that $$\E(\Delta h_v)= \P^{\mathsf{SI}}_\alpha(u \leftrightarrow \infty,~v \leftrightarrow \infty).$$
Since $\P^{\mathsf{SI}}_\alpha$-almost surely $\eta^0$ and $\eta^1$ contain infinite clusters and since $\hat\eta$ has finite energy, we see that $\E(\Delta h_v)>0$. A similar calculation shows that $\E(\Delta h'_v)<0$.

Define
\[ Z_n := \frac{1}{|\Lambda_n \cap \L|} \sum_{v \in \Lambda_n \cap \L} \Delta h_v \qquad\text{and}\qquad Z'_n := \frac{1}{|\Lambda_n \cap \L|} \sum_{v \in \Lambda_n \cap \L} \Delta h'_v .\]
Note that $Z_n$ and $Z'_n$ are measurable with respect to $\sigma|_{\Lambda_{n+1}}$ and $\sigma'|_{\Lambda_{n+1}}$, respectively.
As both $\Delta h$ and $\Delta h'$ are $(\Z^2)_{\text{even}}$-\ffiid\ (by our assumption), we can deduce that the convergence in the ergodic theorem occurs at an exponential rate~\cite{bosco2010exponential} (the statement there is for the entire group of translations, but the argument does not require this). Hence,
\[ \P(Z_n \le 0) + \P(Z'_n \ge 0) \le Be^{-bn^2} \qquad\text{for some }B,b>0\text{ and for all }n \ge 1 .\]
In particular, under any coupling of $\sigma$ and $\sigma'$, for all $n \ge 1$,
\[ \P(\sigma|_{\Lambda_{n+1}} = \sigma'|_{\Lambda_{n+1}}) \le \P(Z_n=Z'_n) \le \P(Z_n \le 0) + \P(Z'_n \ge 0) \le Be^{-bn^2} .\]
However, by \cref{lem:spin-agree}, there exists a coupling such that $\P(\sigma|_{\Lambda_{n+1}} = \sigma'|_{\Lambda_{n+1}}) \ge e^{-an}$ for some $a>0$ and all $n \ge 1$.
We have thus reached a contradiction.
\end{proof}

\begin{proof}[Proof of \cref{thm:h_grad}]
\cref{prop:h_limit} shows that $\P^{\textsf{hf},m}_{\Lambda,c}$ converges to an infinite-volume limit $\P^{\textsf{hf},m}_c$ as $\Lambda$ increases to $(\Z^2)^*$ along diamond domains. \cref{prop:h_grad_ffiid} shows that if $h$ is sampled from $\P^{\textsf{hf},m}_c$, then $|\nabla_d h|$ is \ffiid. It then easily follows from~\eqref{eq:spin-height-abs-delta} that $|\Delta h|$ is also \ffiid.

\cref{prop:h_grad_no_ffiid} shows that $\Delta h$ is not $(\Z^2)_{\text{even}}$-\ffiid. By~\eqref{eq:spin-height-nabla} and~\eqref{eq:spin-height-delta}, $\Delta h$ is a finitary factor (with a bounded coding radius) of $\nabla h$, so that it follows that $\nabla h$ is not $(\Z^2)_{\text{even}}$-\ffiid. Similarly, $h$ is also not $(\Z^2)_{\text{even}}$-\ffiid.
Finally, to show that $\nabla_d h$ is not $(\Z^2)_{\text{even}}$-\ffiid, it suffices to show that $\nabla h$ is a finitary factor of $\nabla_d h$. Indeed, this easily follows from the following observations: First, if we know the $\nabla h $ along one edge, then the gradient at any other edge is determined by summing the diagonal gradients along any two diagonal paths connecting their endpoints. Second, we observe that if $(\nabla_d h)_e \neq 0$ for some diagonal edge $e$, then the gradient along the edges whose endpoints are in $\{u,v,u^*,v^*\}$ (where we write $e=\{u,v\}$ and $e^*=\{u^*,v^*\}$) are determined. Finally, with probability 1, there must be an edge with non-zero diagonal gradient (since otherwise all the height would be in $\{m,m+1\}$).
\end{proof}

\section{Open questions}
\label{sec:open}
In this section, we discuss some open questions and future directions of research. We split this section into two subsections, the first dealing with questions solely about the superimposed model, and the second outlining some questions related to finitary codings of gradient models. 
\subsection{Superimposed model}
The first two questions are related to possible extensions of \cref{thm:uniqueness}.
\begin{question}\label{quest:uniqueness}
Is it true that $\P^{\mathsf{SI},10}_{\alpha,q}=\P^{\mathsf{SI},01}_{\alpha,q}$ for all $\alpha>0$ and $q > 1$? 
\end{question}

\begin{question}\label{quest:infinite-cluster}
Fix $\alpha>0$ and $q>1$. Does there exists an infinite cluster under unfavorable boundary conditions, i.e., is $\P^{\mathsf{SI},01}_{\alpha,q}(\text{exists an infinite cluster in $\eta^0$})>0$ (or equivalently equal to 1)?
\end{question}

We also raise the possibility of some monotonicity in the parameter $\alpha$.

\begin{question}\label{quest:monotone}
Fix $q>1$, a finite set $\Delta \subset E(\L) \cup E(\L^*)$ and a boundary condition $\tau \in \Omega^{\mathsf{SI}}$. Is the marginal of $\P^{\mathsf{SI},\tau}_{\Delta,\alpha,q}$ on $\eta^0$ stochastically increasing in $\alpha$?
\end{question}

\subsection{Finitary codings for gradient models}

Let us discuss some questions regarding general models on $\Z^d$ ($d \ge 2$).
The results in this article may be seen as instances of the following type of situation.
Suppose we are given a model with multiple Gibbs states. Let $X$ denote a sample from one of the Gibbs states. Suppose $f$ is a local map for which the law of $f(X)$ is unique (in the sense that all Gibbs states yield the same law). We would like to ask whether this implies that $f(X)$ is \ffiid. In fact, we could also ask this under the weaker assumption that the law of $f(X)$ is unique among all periodic maximal-pressure Gibbs states (this is the situation for the low-temperature Potts model in more than two dimensions, where there are Dobrushin states which induce different gradient measures).

We do not know whether to expect such a general statement to be true.
Indeed, there are more basic questions which are still open.
For simplicity, let us restrict ourselves to models with nearest-neighbor interactions, where the Gibbs measures are Markov random fields.
For models with finite-energy (this assumption may be weakened), uniqueness of the Gibbs measure is a necessary condition for being \ffiid. This immediately raises the question of whether this is also a sufficient condition (perhaps under some mild technical conditions).
As far as we know, even the question of whether this is sufficient for being a factor of an \iid\ process (without the finitary property) is still open (see~\cite[Question~3]{van1999existence}). This makes it somewhat difficult to formulate a very concrete and general (yet tractable) question regarding gradient models, but we nevertheless try to indicate some possible questions of interest in this direction.

For monotone (FKG) models with finite-energy, it is known that uniqueness is sufficient for being \ffiid. We therefore raise the following general question.

\begin{question}
Consider a nearest-neighbor monotone model with finite-energy. Suppose $f$ is a local map such that the law of $f(X)$ is unique among all Gibbs states. Is $f(X)$ \ffiid?
\end{question}

Coming back to the meta-question raised above, we would also be interested in any specific models in which there are multiple Gibbs states and $f(X)$ is \ffiid\ for some interesting function $f$.

One particular instance concerns the critical planar (ferromagnetic) Potts model with $q \ge 5$ states. In this case, it is known that the phase transition is discontinuous~\cite{duminil2016discontinuity,ray2019short} and that there are $q+1$ translation-invariant Gibbs states at criticality; $q$ ordered states arising from constant boundary conditions and one disordered state arising from free boundary conditions.

\begin{question}
Fix $q \ge 5$ and let $\sigma$ be sampled from one of the $q$ constant boundary condition Gibbs states for the critical $q$-state Potts model on $\Z^2$.
Does there exist a non-trivial function $f$ such that $f(\sigma)$ is \ffiid?
\end{question}

Another particular instance concerns the (lattice) Widom--Rowlinson model on $\Z^d$ at high fugacity. In this model, a configuration $\sigma$ consists of spins taking values in $\{-1,0,1\}$ with the hard constraint that neighboring spins cannot have opposite signs (i.e., $\sigma_v\sigma_u \neq -1$ for all adjacent $u$ and $v$). There is a fugacity $\lambda>0$ associated to non-zero spins, so that (in a finite domain) configurations are chosen with probability proportional to $\lambda^{\sum_v |\sigma_v|}$. It is well known~\cite{lebowitz1971phase,burton1995new} that when $\lambda$ is sufficiently large (as a function of $d$), there are two distinct extremal Gibbs measures, which we call here the plus and minus Gibbs measures, related to each other by a global flip of the spins. In this case, these measures are not \ffiid\ (see \cite[Section~1.1.4]{spinka2018finitarymrf}), and the question of whether a gradient of theirs is arises. The gradient we consider here is simply the pointwise absolute value $|\sigma|$ (a more informative gradient would be the random field $(|\sigma_v - \sigma_u|)_{u,v \in \Z^d}$ defined for \emph{all} pairs of vertices, not just nearest neighbors; to keep things simple, we do not consider this here).
We remark that while the Widom--Rowlinson model has a graphical representation similar to that of the beach model~\cite{haggstrom1996random,haggstrom1998random}, its associated random-cluster model is not monotone, and therefore cannot be shown to be \ffiid\ in the same way that the beach-random-cluster was shown to be (namely, using the general result in~\cite{harel2018finitary}). Still, we expect the following to be true.

\begin{question}
Fix $d \ge 2$ and $\lambda>0$. Suppose that the random-cluster model associated to the Widom--Rowlinson model at fugacity $\lambda$ has a unique Gibbs measure. Let $\sigma$ be sampled from the plus Gibbs measure for the Widom--Rowlinson model. Is $|\sigma|$ \ffiid?
\end{question}

Let us also mention an instance in which there is no known graphical representation. Consider the anti-ferromagnetic $q$-state Potts model on $\Z^d$ at low temperature. When $q>4d$, there is a unique Gibbs measure at any temperature, and it is known that this measure is \ffiid~\cite{spinka2018finitarymrf}. On the other hand, it has recently been shown that there are multiple Gibbs measures when $d$ is sufficiently high and the temperature sufficiently low as functions of $q$~\cite{peled2020long,peled2018rigidity}. Moreover, the extreme periodic Gibbs states (of maximal entropy when the temperature is zero) are related to one another by a permutation of the $q$ states and, when $q$ is odd, perhaps also a translation of the lattice by a unit vector in one coordinate. In order not to deal with issues arising from translations (see below), we focus here on even~$q$. There are various possibilities for the choice of a function $f$ for which the law of $f(\sigma)$ is unique among all such periodic Gibbs measures, and we suggest here one such choice. 

\begin{question}
Fix $q \ge 4$ even and let $d$ be sufficiently large and $\beta$ sufficiently large (perhaps infinity).
Let $\sigma$ be sampled from a periodic Gibbs state (of maximal entropy if $\beta=\infty$) for the anti-ferromagnetic $q$-state Potts model at inverse temperature $\beta$. Let $f(\sigma) \in \{0,1\}^{\Z^d}$ be defined by $f(\sigma)_v := \1_{\{ \sigma_v = \sigma_{v+e_1+e_2}\}}$, where $e_1$ and $e_2$ are the first two standard basis vectors in~$\Z^d$. Is $f(\sigma)$ \ffiid? Is $f'(\sigma)$ \ffiid\ for some other non-trivial function $f'$?
\end{question}

We end by addressing the issue of Gibbs states which are related to one another by a translation. The results in this paper concern models in which the relevant Gibbs states are obtained from one another by an ``in-place'' transformation which does not require a translation (note that in the six-vertex model, while the two Gibbs states of arrow configurations are related to each other by a translation, they may also be related to each other by flipping the arrows. In terms of the height function this can be seen as negating the heights, and in terms of the spin representation this can be seen as flipping the spins on one sublattice). When a translation is necessary in order to relate the Gibbs states, it is not even clear how to define a function $f$ for which the law of $f(X)$ is unique.

Let us consider the hard-core model as an example.
In the hard-core model, a random independent set $\sigma \subset \Z^d$ is chosen with probability proportional (in a finite domain) to $\lambda^{|\sigma|}$, where $\lambda>0$ is a parameter called the fugacity. It is well known that when $\lambda$ is sufficiently large (as a function of $d$), there are two distinct extremal Gibbs measures, which we call here the even and odd Gibbs measures, related to one another by a translation by a unit vector. As these measures are not translation-invariant, only $(\Z^d)_\text{even}$-invariant, where $(\Z^d)_\text{even}$ is the group of translations which preserve the two sublattices, it seems natural to ask about $(\Z^d)_\text{even}$-factors. These measures are not themselves $(\Z^d)_\text{even}$-\ffiid\ (this follows from a simple modification of~\cite[Theorem~1.3]{spinka2018finitarymrf} and is similar to the argument given in the proof of \cref{prop:h_grad_no_ffiid}), and we raise the question of whether some gradient of theirs is.

\begin{question}
Fix $d \ge 2$ and let $\lambda$ be sufficiently large. Let $\sigma$ be sampled from the even Gibbs measure for the hard-core model at fugacity $\lambda$. Does there exist a non-trivial function~$f$ such that $f(\sigma)$ is $(\Z^d)_\text{even}$-\ffiid?
\end{question}

\bibliographystyle{abbrv}
\bibliography{library}
\end{document}